\definecolor{darkred}{rgb}{.6,0,0}
\definecolor{darkblue}{rgb}{0,0,.7}
\definecolor{darkgreen}{rgb}{0,.7,0}
\definecolor{darkbrown}{rgb}{0.8,0.4,0.4}
\newcommand{\mt}[1]{{\color{black}{#1}}}
\newcommand\blfootnote[1]{%
  \begingroup
  \renewcommand\thefootnote{}\footnote{#1}%
  \addtocounter{footnote}{-1}%
  \endgroup
}
\crefname{hypothesis}{Hypothesis}{Hypotheses}
\title{Learning Homogenization for Elliptic Operators\thanks{Submitted to the editors July 7, 2023.
\funding{The work of KB, AR, and AMS was sponsored, in part, by the Army
Research Laboratory under Cooperative Agreement W911NF22-2-0120, and KB is also sponsored
under W911NF-22-1-0269. The work of AMS is supported by a Department of Defense Vannevar Bush Faculty Fellowship,
and by the SciAI Center, funded by the Office of Naval Research (ONR), under Grant Number N00014-23-1-2729.
The work of MT is funded by the Department of Energy Computational Science Graduate Fellowship 
under award DE-SC0021110. NBK is grateful to the NVIDIA Corporation for support through full-time employment. \mt{The computations presented here were conducted in the Resnick High Performance Computing Center, a facility supported by Resnick Sustainability Institute at the California Institute of Technology.} \mt{The authors are also grateful to 
Samuel Lanthaler and Endre S\"uli for essential discussions leading to the proof of Lemma \ref{lem:BVLinfty_L2compact}.
}}}}
\author{Kaushik Bhattacharya\thanks{California Institute of Technology, Pasadena, CA 
  (\email{bhatta@caltech.edu}, \email{arajan@caltech.edu}, \email{astuart@caltech.edu}, \email{trautner@caltech.edu}).}
\and Nikola B. Kovachki\thanks{NVIDIA, Santa Clara, CA (nkovachki@nvidia.com).}
\and Aakila Rajan\footnotemark[2]
\and Andrew M. Stuart\footnotemark[2]
\and Margaret Trautner\footnotemark[2]}
\newcommand{\Td}{\mathbb{T}^d}
\newcommand{\R}{\mathbb{R}}
\newcommand{\Rd}{\mathbb{R}^d}
\newcommand{\Rdd}{\mathbb{R}^{d \times d}}
\newcommand{\PD}{\mathsf{PD}_{\alpha, \beta}}
\newcommand{\dy}{\: \mathsf{d}y}
\newcommand{\dx}{\: \mathsf{d}x}
\newcommand{\A}{\mathcal{A}}
\newcommand{\G}{\mathcal{G}}
\newcommand{\supp}{\text{supp }}
\newcommand{\chione}{\chi^{(1)}}
\newcommand{\chitwo}{\chi^{(2)}}
\newcommand{\fone}{f^{(1)}}
\newcommand{\ftwo}{f^{(2)}}
\newcommand{\Aone}{A^{(1)}}
\newcommand{\Atwo}{A^{(2)}}
\newcommand{\eps}{\epsilon}
\newcommand{\p}{\partial}
\newcommand{\BV}{\mathsf{BV}}
\newcommand{\Lip}{\mathsf{Lip}}
\newcommand{\Tdtwo}{\mathbb{Y}^d}
\begin{document}

\maketitle

\begin{abstract}

    Multiscale partial differential equations (PDEs) arise in various applications, and several schemes have been developed to solve them efficiently. Homogenization theory is a powerful methodology that eliminates the small-scale dependence, resulting in simplified equations that are computationally tractable while accurately
    predicting the macroscopic response. In the field of continuum mechanics, homogenization is crucial for deriving constitutive laws that incorporate microscale physics in order to formulate balance laws for the macroscopic quantities of interest. However, obtaining homogenized constitutive laws is often challenging as they do not in general have an analytic form and can exhibit phenomena not present on the microscale. In response, data-driven learning of the constitutive law has been proposed as appropriate for this task. However, a major challenge in data-driven learning approaches for this problem has remained unexplored: the impact of discontinuities and corner interfaces in the underlying material. These discontinuities in the coefficients affect the smoothness of the solutions of the underlying equations. Given the prevalence of discontinuous materials in continuum mechanics applications, it is important to address the challenge of learning in this context; in particular, to develop underpinning theory that establishes the reliability of data-driven methods in this scientific domain. The paper addresses this unexplored challenge by investigating the learnability of homogenized constitutive laws for elliptic operators in the presence of such complexities. Approximation theory is presented, and numerical experiments are performed which validate the theory in the context of learning the
    solution operator defined by the
    cell problem arising in homogenization for elliptic PDEs.
    
\end{abstract}

\section{Introduction} \blfootnote{\mt{All code for this work may be found at \href{https://github.com/mtrautner/LearningHomogenization/}{github.com/mtrautner/LearningHomogenization/}.}} Homogenization theory is a well-established
methodology that aims to eliminate fast-scale dependence in partial differential equations (PDEs) to obtain  homogenized PDEs which produce a good approximate solution of the problem with fast scales while being more computationally tractable. In continuum mechanics, this methodology is of great practical importance as the constitutive laws derived from physical principles are governed by material behavior at small scales, but the quantities of interest are often relevant on larger scales. These homogenized constitutive laws often do not have a
closed analytic form and may have new features not present in the microscale laws. Consequently, there has been a recent surge of interest in employing data-driven methods to learn homogenized constitutive laws. 

The goal of this paper is to study the learnability of homogenized
constitutive laws in the context of one of the canonical model problems of homogenization, namely the divergence form elliptic PDE. One significant challenge in applications of homogenization in material
science arises from the presence of discontinuities and corner interfaces in the underlying material. This leads to a lack of smoothness in the coefficients and solutions of the associated equations, a phenomenon extensively studied in numerical methods for PDEs. Addressing this challenge in the context of learning remains largely unexplored and is the focus of our work. We develop underlying theory and provide accompanying numerical studies to address learnability in this context.

In Subsection \ref{ssec:PF} we establish the mathematical framework and notation for the problem of interest, state the three main
contributions of the paper, and overview the contents of each section 
of the paper. In Subsection \ref{ssec:LR} we provide a detailed literature review. Subsection \ref{ssec:SE} states the stability estimates that are key for the approximation theory developed in the paper and discusses the remainder of the paper in the context of these estimates.
\label{sec:I}
\subsection{Problem Formulation}
\label{ssec:PF}

Consider the following linear multiscale elliptic equation on a bounded domain $\Omega \subset \Rd$:
\begin{subequations}
\label{eqn:ms_ellip}
\begin{align}
    -\nabla_x \cdot\left(A^{\eps}\nabla_x u^{\eps}\right) & = f \quad x \in \Omega, \\
    u^\eps & = 0 \quad x \in \p\Omega.
\end{align}
\end{subequations}
Here $A^{\eps}(x) = A\left(\frac{x}{\eps}\right)$ for $A(\cdot)$ which is $1$-periodic and positive definite: $A: \Td \to \R^{d \times d}_{{\rm sym}, \succ 0}$, \mt{a condition which holds throughout this work. 
Assume further that $f \in L^2(\Omega; \R)$ and has no microscale variation with respect to $x/\eps$.}

Our focus is on linking
this multiscale problem to the homogenized form of equation \eqref{eqn:ms_ellip}, which is
\begin{subequations}\label{eqn:homogenized}
    \begin{align}
        -\nabla_x \cdot\left(\overline{A}\nabla_x u \right) & = f \quad x \in \Omega, \\
        u & = 0 \quad x \in \p\Omega,
    \end{align}
\end{subequations}
where $\overline{A}$ is given by 
\begin{equation}
\label{eq:cell}
    \overline{A} = \int_{\Td}\left(A(y) + A(y) \nabla \chi(y)^T\right) \; \dy,
\end{equation}
and $\chi:\Td \to \Rd$ solves the cell problem
\begin{equation}\label{eqn:cellprob}
    -\nabla \cdot(\nabla \chi A) = \nabla \cdot A, \quad \chi \;\text{is }1\text{-periodic}.
\end{equation}
\mt{All of the preceding PDEs are to be interpreted  as holding in the weak sense.}
For $0 < \eps \ll 1$, the solution $u^{\eps}$ of \eqref{eqn:ms_ellip} is approximated by the solution $u$ of \eqref{eqn:homogenized}, and the error converges to zero as $\eps \to 0$ in various 
topologies \cite{bensoussan2011asymptotic,blanc2023homogenization,pavliotis2008multiscale}. 

We assume that
\[\|A\|_{L^\infty} := \sup_{y \in \Td} |A(y)|_F <\infty\]
where \(|\cdot|_F\) is the Frobenius norm. 
Hence $A \in L^{\infty}\left(\Td;\Rdd\right)$ and $A^{\eps} \in L^{\infty}\left(\Omega;\Rdd\right).$ Similarly, for \(A \in L^2(\Td;\Rdd)\), we define
\[\|A\|_{L^2}^2 := \int_{\Td} |A(y)|_F^2 \dy.\]
\mt{Also, for given $\beta \ge \alpha>0$, we}
define the following subset
of  1-periodic, positive-definite, symmetric matrix fields in
$L^{\infty}\left(\Td;\Rdd\right)$ by
\[\PD = \{A \in L^\infty(\Td;\Rdd) : \; \forall (y,\xi) \in \Td \times \Rd, \; \alpha |\xi|^2 \leq \langle \xi, A(y) \xi \rangle \leq \beta |\xi|^2\}.\]
\mt{For open set $\Omega \subset \Rd$, we denote the \textit{variation} of a function $u \in L^1_{\text{loc}}(\Omega)$ by 
\begin{equation*}
    V(u, \Omega) = \sup\Big\{\sum_{i = 1}^d \int_{\Omega}\frac{\p\Phi_i}{\p x_i}u \dx: \; \Phi \in C_0^{\infty}(\Omega;\Rd), \; \|\Phi\|_{L^{\infty}(\Omega;\Rd)} \leq 1\Big\}
\end{equation*}
and the set of functions of bounded variation on $\Td$ as 
\[\BV = \{u \in L^1(\Td): \; V(u,\Td) < \infty \}.\]
For further information on $\BV$, we refer to $\cite{leoni2017first}$.
Finally, we often work in the Sobolev space $H^1$ 
restricted to spatially
mean-zero periodic functions, denoted
$$\dot{H}^1: = \Bigl\{v \in W^{1,2}(\Td)\; \Big| \; v \text{ is } 1\text{-periodic}, \int_{\Td} v \; \dy = 0\Bigr\};$$
the norm on this space is defined by}
\begin{equation}
\|g\|_{\dot{H}^1} := \|\nabla g \|_{L^2}.
\end{equation}

Numerically solving \eqref{eqn:ms_ellip} is far more computationally expensive than solving the homogenized equation \eqref{eqn:homogenized}, motivating the wish to find the homogenized
coefficient $\overline{A}$ defining equation \eqref{eqn:homogenized}. The difficult part of obtaining the equation \eqref{eqn:homogenized} is solving the cell problem \eqref{eqn:cellprob}. Although explicit solutions exist in the one-dimensional setting for piecewise constant $A$ \cite{bhattacharya2023learning} and in the two-dimensional setting where $A$ is a layered material \cite{pavliotis2008multiscale}, in general a closed form solution is not available and the cell problem must be solved numerically. \mt{Note that in general the action of the divergence
$\nabla\cdot$ on terms involving $A$ in the cell problem necessitates the use of weak solutions  for $A\notin C^1(\Td, \Rdd);$ this is a commonly occurring situation in applications} such as those arising
from porous medium flow, or to vector-valued generalizations of the setting here to elasticity, rendering the numerical solution non-trivial. For this reason, it is potentially valuable to approximate the solution map 
\begin{equation}
\label{eq:themap}
    G: \; A \mapsto \chi,
\end{equation}
defined by the cell problem, using a map defined by a neural operator.
More generally it is foundational to the broader program of learning
homogenized constitutive models from data to thoroughly study this issue
for the divergence form elliptic equation as the insights gained will be important for
understanding the learning of more complex parameterized homogenized models, such as
those arising in nonlinear elasticity, viscoelasticity, and plasticity.

The full map from $A$ to the homogenized tensor $\overline{A}$ is expressed by $A \mapsto (\chi,A) \mapsto \overline{A}$, and one could instead learn the map 
\begin{equation}\label{eq:theothermap}
F: A \mapsto \overline{A}.
\end{equation} 
Since the map $(\chi,A) \mapsto \overline{A}$ is is defined by a quadrature, we focus on the approximation of $A \mapsto \chi$ and state equivalent results for the map $A \mapsto \overline{A}$ that emerge as consequences of the approximation of $\chi$. In this paper we
make the following contributions:
\begin{enumerate}
\item We state and prove universal approximation theorems for the \mt{map $G$ defined by \eqref{eqn:cellprob} and \eqref{eq:themap}, and map} $F$ defined by \eqref{eq:cell}, \eqref{eqn:cellprob}, and \eqref{eq:theothermap}.
\item We provide explicit examples of microstructures which satisfy the hypotheses
of our theorems; \mt{these include microstructures generated by
probability measures which generate discontinuous functions in $\BV$.}
\item We provide numerical experiments to demonstrate the ability of neural operators to approximate the solution map on four different classes of material parameters $A$, \mt{all covered by our theoretical setting.}
\end{enumerate}

In Subsection \ref{ssec:LR} we provide an overview of the literature, followed in Subsection \ref{ssec:SE} by
a discussion of stability estimates for \eqref{eqn:cellprob}, with respect to variations in $A$; these are
at the heart of the analysis of universal approximation.
The main body of the text then commences with Section \ref{sec:M}, which characterizes the
microstructures of interest to us in the context of continuum mechanics.
Section \ref{sec:ApproxThms} states universal approximation theorems for $G(\cdot)$ and $F(\cdot)$,
using the Fourier neural operator. 
In Section \ref{sec:E} we give numerical experiments illustrating the approximation of the map $G$ defined by \eqref{eq:themap}  on microstructures of interest in continuum mechanics. Details of the
stability estimates, the proofs of universal approximation theorems, properties of
the microstructures, and details of numerical experiments are given in Appendices \ref{apdx:perturb}, \ref{apdx:approxthms}, \ref{apdx: micro}, \mt{and \ref{apd:implementation} }respectively.

\subsection{Literature Review}
\label{ssec:LR}

Homogenization aims to derive macroscopic equations that describe the effective 
properties and behavior of solutions to problems at larger scales given a system that exhibits
behaviour at (possibly multiple) smaller scales. Although it is developed for the various cases of random,
statistically stationary, and periodic small-scale structures, we work here entirely in the periodic setting.
The underlying assumption of periodic homogenization theory is that the coefficient is periodic in the small-scale variable, and that the scale separation is large compared to the macroscopic scales of interest.
Convergence of the solution of the multiscale problem to
the homogenized solution is well-studied; see 
\cite{allaire1992homogenization,cioranescu1999introduction}. 
We refer to  the texts \cite{bensoussan2011asymptotic,blanc2023homogenization,pavliotis2008multiscale} 
for more comprehensive citations to the literature.
Homogenization has found extensive application in the 
setting of continuum mechanics \cite{gurtin1982introduction} where,
for many multiscale materials, the scale-separation assumption is
natural.
In this work, we are motivated in part by learning constitutive 
models for solid materials, where crystalline microstructure renders the material parameters
 discontinuous and may include corner interfaces. 
This difficulty has been explored extensively in the context of numerical methods for 
PDEs, particularly with adaptive finite element
 methods \cite{hou1997multiscale, bonito2013adaptive,nochetto2009theory, OWHADI2008397}.

There is a significant body of work on the approximation theory associated with
parametrically dependent solutions of PDEs, including viewing these solution as
a map between the function space of the parameter and the function space of the
solution, especially for problems possessing holomorphic regularity \cite{cohen2010convergence,cohen2011analytic,chkifa2013sparse}. This work could potentially
be used to study the cell problem for homogenization that is our focus here. However,
there has been recent interest in taking a data-driven approach to solving PDEs
 via machine learning because of its flexibility and ease of implementation.
 A particular approach to learning solutions to PDEs is operator learning, a machine learning 
 methodology where the map to be learned is viewed as an operator acting
 between infinite-dimensional function spaces rather than between finite-dimensional spaces \cite{bhattacharya2021model,li2021fourier,lu2021learning,nelsen2021random,kovachki2023neural}. 
  Determining whether, and then when, operator learning models have advantages over classical numerical methods
   in solving PDEs remains an active area of research \cite{batlle2023kernel}.
   The paper \cite{marcati2023exponential} makes a contribution to this area, in the
   context of the divergence form elliptic PDE and the map from coefficient to solution 
   when the coefficient is analytic over its domain; the authors prove that $\epsilon$ error is
   achievable for a DeepONet \cite{lu2021learning} of size only polylogarithmic in $\epsilon$,
   leveraging the exponential convergence of spectral collocation methods for boundary value problems with
   analytic solutions.
   However, in the setting of learning homogenized constitutive laws in material science, discontinuous
   coefficients form a natural focus and indeed form the focus of this paper. A few 
   characteristics make operator learning a promising option in this context. First, 
   machine learning has been groundbreaking in application settings with no 
   clear underlying equations, such as computer vision and language models 
   \cite{he2016deep, brown2020language}. In constitutive modeling, though 
   the microscale constitutive laws are known, the homogenized equations are 
   generally unknown and can incorporate dependencies that are not present on 
   the microscale, such as history dependence, anisotropy, and slip-stick 
   behavior \cite{phillips2001crystals, bhattacharya1999phase}.
   Thus, constitutive models lie in a partially equation-free 
setting where data-driven methods could be useful. Second, machine learned models
as surrogates for expensive computation can be valuable when the cost of producing 
data and training the model can be amortized over many forward uses of the trained model.
Since the same materials are often used for fabrication over long time periods, 
this can be a setting where the upfront cost of data production and model training
 is justified. 

Other work has already begun to explore the use of data-driven methods for 
constitutive modeling; a general review of the problem and its challenges, in the context of 
constitutive modeling of composite materials, may be found in \cite{liu2021review}. Several works use the popular framework of physics-informed machine learning to approach the problem \cite{fuhg2022physics, tartakovsky2020physics, ma2023learning, haghighat2023constitutive}. In \cite{as2022mechanics}, physical constraints are enforced on the network architecture while learning nonlinear elastic constitutive laws. In \cite{linka2021constitutive}, the model is given access to additional problem-specific physical knowledge. Similarly, the work of \cite{xu2021learning} predicts the Cholesky factor of the tangent stiffness matrix from which the stress may be calculated; this method enforces certain physical criteria. The paper
\cite{huang2020learning} studies approximation error and uncertainty quantification for this learning problem. In \cite{han2023neural}, a derivative-free approach is taken to learning homogenized solutions where regularity of the material coefficient is assumed. The work of \cite{liu2022learning} illustrates the potential of operator learning methodology to model constitutive laws with history dependence, such as those that arise in crystal plasticity. Finally, a number of further works demonstrate empirically the potential of learning constitutive models, including \cite{mozaffar2019deep, logarzo2021smart,deelia2021, liu2020neural}.

However, the underlying theory behind operator learning for constitutive models lags behind its empirical application. In \cite{bhattacharya2023learning}, approximation theories are developed to justify the use of a recurrent Markovian architecture that performs well in application settings with history dependence. This architecture is further explored in \cite{liu2023learning} with more complex microstructures. Universal approximation results are a first step in developing theory for learning because they guarantee that there exists an $\epsilon$-approximate operator
within the operator approximation class, which is consistent with an assumed true model underlying the data \cite{cybenko1989approximation,lanthaler2022error,kovachki2023neural,kovachki2021universal}. 
In addition to universal approximation, further insight may be gained by seeking to quantify the data or model size required to obtain a given level of accuracy; the papers
\cite{lanthaler2022error,kovachki2021universal, marcati2022exponential} also contain work in this direction, as do the papers
\cite{herrmann2022neural,opschoor2022exponential}, which build on the analysis developed in
\cite{cohen2010convergence,cohen2011analytic,chkifa2013sparse} referred to above. In our work
we leverage two existing universal approximation theorems for neural operators, one from \cite{kovachki2023neural} for general neural operators (NOs) and one from \cite{kovachki2021universal} for Fourier neural operators (FNOs), a particular practically useful
architecture from within the NO class. We take two different approaches to proving approximation theorems based on separate PDE solution stability results in pursuit of a more robust understanding of the learning problem. Since the state of the field is in its
infancy, it is valuable to have different approaches to these
analysis problems. Finally, we perform numerical experiments on various microstructures to understand the practical effects of non-smooth PDE coefficients in learning solutions.
We highlight the fact that in this paper we do not tackle issues related to the non-convex
optimization problem at the heart of training neural networks; we simply use state of the art stochastic
gradient descent for training, noting that theory explaining its excellent empirical behaviour is lacking.

Throughout this paper we focus on equation \eqref{eqn:ms_ellip}, which describes a conductivity
equation in a heterogeneous medium; a natural generalization of interest is to the constitutive law of linear elasticity, in which the solution is vector-valued and the coefficient is a fourth order tensor. Though it is a linear elliptic equation, we echo the sentiment of Blanc and Le Bris \cite{blanc2023homogenization} with their warning ``do not underestimate the difficulty of equation \eqref{eqn:ms_ellip}.'' There are many effects to be understood in this setting, and resolving learning challenges is a key step towards understanding similar questions for the learning of parametric
dependence in more complex homogenized constitutive laws
where machine-learning may prove particularly useful.

\subsection{Stability Estimates}
\label{ssec:SE}

At the heart of universal approximation theorems is stability of the solution map
\eqref{eq:themap}; in particular continuity of the map for certain classes of $A$. In this subsection, we present three key stability results that are used to prove the approximation theorems in Section \ref{sec:ApproxThms}. The proofs of the following stability estimates may all be found in Appendix \ref{apdx:perturb}.

A first strike at the stability of the solution map \eqref{eq:themap} is a modification of
the classic $L^{\infty}/H^1$ Lipschitz continuity result for dependence of the
solution of elliptic PDEs on the coefficient; here generalization is necessary because the coefficient
also appears on the right-hand side of the equation defining $G(\cdot):$

\begin{restatable}{proposition}{Propstabinfty}
    \label{prop:stab_infty}
    Consider the cell problem 
    defined by equation \eqref{eqn:cellprob}. The following hold:
    \begin{enumerate}
        \item If $A \in \PD$, then \eqref{eqn:cellprob} has a unique solution $\chi \in \dot{H}^1(\Td;\Rd)$ and 
        \[\|\chi \|_{\dot{H}^1(\Td;\Rd)} \leq \frac{\sqrt{d} \beta}{\alpha}.\]
        \item For $\chione$ and $\chitwo$ solutions to the cell problem in equation \eqref{eqn:cellprob} associated with coefficients $\Aone,\Atwo \in \PD$, respectively, it follows that
    \begin{equation}\label{eqn:perturb}
    \|\chitwo - \chione\|_{\dot{H}^1(\Td;\Rd)} \leq \frac{\sqrt{d}}{\alpha}\left(1 + \frac{\beta}{\alpha}\right)\|A^{(1)}-A^{(2)}\|_{L^{\infty}(\Td;\Rdd)}.
    \end{equation}
    \end{enumerate}
\end{restatable}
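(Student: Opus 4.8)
The plan is to reduce the vector-valued cell problem \eqref{eqn:cellprob} to $d$ decoupled scalar elliptic problems, solve each by Lax--Milgram on the Hilbert space $\dot{H}^1(\Td;\R)$ (on which the norm $\|\nabla\cdot\|_{L^2}$ is equivalent to the full $W^{1,2}$ norm by the Poincar\'e--Wirtinger inequality, so the space is complete), and then recombine the components. Writing $\chi = (\chi_1,\dots,\chi_d)$ and using symmetry of $A$, the $j$-th component solves, in the weak sense, $-\nabla\cdot(A\nabla\chi_j) = \nabla\cdot(Ae_j)$, where $Ae_j$ is the $j$-th column of $A$; that is,
\[
\int_{\Td} (A\nabla\chi_j)\cdot\nabla\phi\,\dy = -\int_{\Td}(Ae_j)\cdot\nabla\phi\,\dy \qquad \text{for all } \phi\in\dot{H}^1(\Td;\R).
\]
Crucially this weak form never differentiates $A$, so no regularity beyond membership in $\PD$ is used. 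The bilinear form $(u,v)\mapsto\int_{\Td} (A\nabla u)\cdot\nabla v\,\dy$ is coercive with constant $\alpha$ and bounded with constant $\beta$ on $\dot{H}^1(\Td;\R)$ directly from the two-sided bound defining $\PD$, and $\phi\mapsto -\int_{\Td}(Ae_j)\cdot\nabla\phi\,\dy$ is a bounded linear functional of norm at most $\beta$ (since $|A(y)e_j|\le\beta$ and $|\Td|=1$). Lax--Milgram then gives a unique $\chi_j$ with $\|\chi_j\|_{\dot{H}^1}\le\beta/\alpha$; summing the squares over $j=1,\dots,d$ gives $\|\chi\|_{\dot{H}^1(\Td;\Rd)}^2\le d\beta^2/\alpha^2$, which is part 1.

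For part 2, I would subtract the componentwise weak formulations for $\chione$ and $\chitwo$. With $\delta\chi_j = \chi^{(2)}_j - \chi^{(1)}_j$ and $\delta A = \Atwo - \Aone$, the identity $\Atwo\nabla\chi^{(2)}_j - \Aone\nabla\chi^{(1)}_j = \Atwo\nabla\delta\chi_j + \delta A\,\nabla\chi^{(1)}_j$ yields
\[
\int_{\Td}(\Atwo\nabla\delta\chi_j)\cdot\nabla\phi\,\dy = -\int_{\Td}(\delta A\,e_j)\cdot\nabla\phi\,\dy - \int_{\Td}(\delta A\,\nabla\chi^{(1)}_j)\cdot\nabla\phi\,\dy .
\]
Testing with $\phi = \delta\chi_j$, the left side is at least $\alpha\|\delta\chi_j\|_{\dot{H}^1}^2$; on the right, the first term is at most $\|\delta A\|_{L^\infty}\|\delta\chi_j\|_{\dot{H}^1}$, while the second, by Cauchy--Schwarz together with part 1, is at most $\|\delta A\|_{L^\infty}\|\chi^{(1)}_j\|_{\dot{H}^1}\|\delta\chi_j\|_{\dot{H}^1}\le (\beta/\alpha)\|\delta A\|_{L^\infty}\|\delta\chi_j\|_{\dot{H}^1}$. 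Dividing through gives $\|\delta\chi_j\|_{\dot{H}^1}\le\alpha^{-1}\bigl(1+\beta/\alpha\bigr)\|\delta A\|_{L^\infty}$, and summing the squares over $j$ produces \eqref{eqn:perturb}.

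There is no serious analytic obstacle here: everything is elementary once the weak formulation is set up. The points that genuinely require care are (i) verifying that the divergences acting on $A$ on both sides of \eqref{eqn:cellprob} are harmless after integrating by parts against periodic test functions, which is exactly what allows merely $L^\infty$ (indeed merely $\PD$) coefficients and hence discontinuous microstructures; and (ii) the dimensional bookkeeping, in particular handling the vector-valued unknown $\chi$ by decoupling into scalar problems and recombining, which is the origin of the $\sqrt d$ factors, together with the pointwise inequalities $|\delta A(y)\,e_j|\le\|\delta A\|_{L^\infty}$ and $|\delta A(y)\,\nabla\chi^{(1)}_j(y)|\le\|\delta A\|_{L^\infty}|\nabla\chi^{(1)}_j(y)|$, which follow from bounding the operator norm of $\delta A(y)$ by its Frobenius norm.
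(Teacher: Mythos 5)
Your proof is correct and follows essentially the same route as the paper: decoupling into $d$ scalar problems, Lax--Milgram/energy estimates with coercivity constant $\alpha$, testing the subtracted weak formulations with the difference $\chi^{(2)}_j-\chi^{(1)}_j$, and recombining components to produce the $\sqrt{d}$ factor. The only cosmetic difference is that the paper routes the perturbed source term through a $\dot{H}^{-1}$ bound while you estimate it directly by Cauchy--Schwarz, which is equivalent.
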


 However, this perturbation result is insufficient for approximation theory because the space $L^\infty$ is not separable \mt{and it is not
 natural to develop approximation theory in such spaces \cite[Chapter 9]{devore1993constructive}. While it is possible to} define the problem on a separable subspace of $L^\infty$, see Lemma~\ref{lemma:extended_map_Linfty}, such spaces are not particularly useful in applications to micromechanics. \mt{Many natural models for realistic microstructures work with classes of discontinuous functions
in which the boundary of material discontinuity can occur anywhere in the domain.
Such functions cannot be contained in any separable subspace of $L^\infty$; see Lemma~\ref{lemma:no_separable_characteristics}. To
deal with this issue it is desirable to establish} continuity from $L^q$ to $\dot{H}^1$ for some $q \in [2,\infty)$. To this end, we provide two additional stability results. The first stability result gives continuity, but not Lipschitz continuity, from $L^2$ to $\dot{H}^1$. The second stability result gives Lipschitz continuity from $L^q$ to $\dot{H}^1$, some $q \in (2,\infty)$.

\begin{restatable}{proposition}{PropstabLtwo}
        \label{prop:cellproblem_cont_L2}
Endow \(\PD\) with the \(L^2(\Td;\Rdd)\) induced topology and let \(K \subset \PD\) be a closed set. Define the mapping \(G: K \to \dot{H}^1(\Td;\Rd)\) by \(A \mapsto \chi\) as given by \eqref{eqn:cellprob}. Then there exists a bounded continuous mapping $$\G \in C(L^2(\Td; \;\Rdd); \dot{H}^1(\Td;\Rd))$$ such that \(\G(A) = G(A)\) for any \(A \in K\).
\end{restatable}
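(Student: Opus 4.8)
The plan is to prove that $G$ is continuous on $K$ for the $L^2$ topology — using crucially that $K\subset\PD$ supplies both the uniform a priori bound of Proposition~\ref{prop:stab_infty} and uniform ellipticity — and then to extend $G$ from the closed set $K$ to all of $L^2(\Td;\Rdd)$ by an abstract extension theorem. Throughout I use the componentwise weak form of \eqref{eqn:cellprob}: writing $\chi=(\chi^1,\dots,\chi^d)$, each $\chi^j\in\dot{H}^1(\Td)$ satisfies, using symmetry of $A$,
\begin{equation*}
\int_{\Td}\langle\nabla v,\,A\nabla\chi^j\rangle\dy=-\int_{\Td}\langle\nabla v,\,Ae_j\rangle\dy\qquad\text{for all }v\in\dot{H}^1(\Td),
\end{equation*}
together with the fact that $A\in\PD$ forces the operator norm of $A(y)$ to be at most $\beta$ for a.e.\ $y$.

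First I would note that $\PD$, and hence $K$, is closed in $L^2(\Td;\Rdd)$: if $A_n\to A$ in $L^2$ with $A_n\in\PD$, then along an a.e.\ convergent subsequence the pointwise spectral bounds pass to the limit, so $A\in\PD$. Now fix $A_n\to A$ in $L^2$ with $A_n\in K$, so $A\in K$, and put $\chi_n:=G(A_n)$, $\chi:=G(A)$, which obey $\|\chi_n\|_{\dot{H}^1}\le\sqrt d\,\beta/\alpha$ by Proposition~\ref{prop:stab_infty}. Since $L^2$ is metrizable it suffices to show that every subsequence of $(\chi_n)$ has a further subsequence converging to $\chi$ in $\dot{H}^1$. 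Given a subsequence, use boundedness in the Hilbert space $\dot{H}^1$ and the compact embedding $\dot{H}^1(\Td)\hookrightarrow L^2(\Td)$ (Rellich--Kondrachov) to extract a further subsequence, not relabeled, with $\chi_n\rightharpoonup\bar\chi$ in $\dot{H}^1$, $\chi_n\to\bar\chi$ in $L^2$, and $A_n\to A$ a.e. For each fixed $v$ and each $j$, dominated convergence (with dominants coming from the uniform bound $\|A_n(y)\|_{\mathrm{op}}\le\beta$) gives $A_n\nabla v\to A\nabla v$ and $A_ne_j\to Ae_j$ in $L^2$, while $\nabla\chi_n^j\rightharpoonup\nabla\bar\chi^j$ weakly in $L^2$; passing to the limit in the weak form then shows $\bar\chi$ solves \eqref{eqn:cellprob} for the coefficient $A$, so $\bar\chi=\chi$ by the uniqueness part of Proposition~\ref{prop:stab_infty}.

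To upgrade this to strong convergence, set $w_n^j:=\chi_n^j-\chi^j\in\dot{H}^1(\Td)$, so $\nabla w_n^j\rightharpoonup0$ in $L^2$; testing the weak form for $A_n$ against $v=w_n^j$ and using ellipticity,
\begin{align*}
\alpha\,\|\nabla w_n^j\|_{L^2}^2
&\le\int_{\Td}\langle\nabla w_n^j,\,A_n\nabla w_n^j\rangle\dy\\
&=-\int_{\Td}\langle\nabla w_n^j,\,A_ne_j\rangle\dy-\int_{\Td}\langle\nabla w_n^j,\,A_n\nabla\chi^j\rangle\dy\;\longrightarrow\;0,
\end{align*}
since $A_ne_j\to Ae_j$ and $A_n\nabla\chi^j\to A\nabla\chi^j$ strongly in $L^2$ while $\nabla w_n^j\rightharpoonup0$ weakly in $L^2$. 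Summing over $j$ gives $\chi_n\to\chi$ in $\dot{H}^1(\Td;\Rd)$, which establishes $G\in C(K;\dot{H}^1(\Td;\Rd))$, and $\|G(A)\|_{\dot{H}^1}\le\sqrt d\,\beta/\alpha$ shows $G$ is bounded. Finally, since $K$ is closed in the metric space $L^2(\Td;\Rdd)$ and the target $\dot{H}^1(\Td;\Rd)$ is a Banach space, the Dugundji extension theorem supplies $\G\in C(L^2(\Td;\Rdd);\dot{H}^1(\Td;\Rd))$ with $\G|_K=G$ and $\G(L^2(\Td;\Rdd))\subset\overline{\operatorname{conv}}\,G(K)$; as $G(K)$ lies in the ball of radius $\sqrt d\,\beta/\alpha$, so does its closed convex hull, whence $\G$ is bounded.

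The \emph{hard part} is the weak-to-strong upgrade, i.e.\ making rigorous that the products of the weakly $L^2$-convergent gradients $\nabla\chi_n^j$ with the coefficients $A_n$ — which are uniformly bounded and elliptic but converge only in $L^2$, not uniformly — behave correctly, so that the genuinely nonlinear dependence of $\chi$ on $A$, including the appearance of $A$ on the right-hand side of \eqref{eqn:cellprob}, is controlled. The compact embedding $\dot{H}^1\hookrightarrow L^2$, the uniform coercivity and bound from $\PD$, and dominated convergence along an a.e.\ subsequence are precisely the ingredients that make it go through; a minor secondary point is the $L^2$-closedness of $\PD$, which ensures the limiting coefficient remains admissible.
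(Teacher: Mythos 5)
Your proof is correct, and it reaches the conclusion by a genuinely different mechanism than the paper for the key continuity step. The paper never invokes weak compactness: it subtracts the two weak formulations and tests with $u_n-u$ to obtain the direct estimate $\alpha\|u_n-u\|_{\dot{H}^1}^2 \leq \int_{\Td}\langle (A-A_n)(\nabla u + e),\nabla u_n - \nabla u\rangle \dy$, hence $\|u_n-u\|_{\dot{H}^1}\leq \alpha^{-1}\|(A_n-A)(\nabla u+e)\|_{L^2}$, and then shows via a separate product-convergence lemma (dominated convergence along a.e.-convergent subsequences plus the sub-subsequence principle) that $A_n g \to Ag$ in $L^2$ for the \emph{fixed} function $g=\nabla u + e$. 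You instead run the classical G-convergence-style argument: the uniform a priori bound from Proposition~\ref{prop:stab_infty} gives weak $\dot{H}^1$ compactness, the weak limit is identified as $G(A)$ through strong--weak pairing in the weak form together with uniqueness, and coercivity upgrades weak to strong convergence; the dominated-convergence ingredient is the same, but you need Rellich, limit identification, and uniqueness where the paper needs only one algebraic manipulation isolating $(A-A_n)$. The trade-off: the paper's route yields a quantitative modulus of continuity (precursor to the Lipschitz estimates of Lemma~\ref{lem: Lpcont}), while yours is softer and more robust, requiring no clever rearrangement. Both conclude identically with the Tietze--Dugundji extension; to your credit, you additionally verify that $\PD$ (hence $K$) is closed in $L^2(\Td;\Rdd)$, which the extension theorem genuinely requires, and you extract boundedness of $\G$ from the convex-hull property of Dugundji's theorem---two points the paper leaves implicit. (Minor cosmetic slip: the sub-subsequence criterion should be invoked for the sequence $(\chi_n)$ in the metric space $\dot{H}^1$, not because $L^2$ is metrizable; this does not affect the argument.)
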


The preceding $L^2$ continuity proposition is used to prove the approximation results for the FNO in Theorems \ref{thm:fno_cell} and \ref{thm:fno_hom}. \mt{While not necessary for the approximation theory proofs, the following proposition on Lipschitz continuity from $L^q$ to $\dot{H}^1$ establishes a more concrete bound on the approximation error, which allows for additional analysis such as providing rough bounds on grid error as discussed in Subsection \ref{ssec:discussion}.}

\begin{restatable}{proposition}{PropstabLq}
\label{prop:stab_Lq}
    There exists $q_0 \in (2,\infty)$ such that, for all $q$ satisfying $q \in (q_0,\infty]$, the following holds.
    \mt{Endow $\PD$ with the $L^q(\Td; \Rdd)$ topology and let $K \subset \PD$ be a closed set. Define the mapping $G: K \to \dot{H}^1(\Td;\Rd)$ by $A \mapsto \chi$ as given by \eqref{eqn:cellprob}. Then there exists a bounded Lipschitz-continuous mapping $$\G: \; L^q(\Td;\Rdd)\to \dot{H}^1(\Td;\Rd)$$ such that $\G(A) = G(A)$ for any $A \in K$.}
\end{restatable}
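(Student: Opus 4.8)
The plan is to run the coercivity-plus-H\"older argument underlying Proposition~\ref{prop:stab_infty}, but to absorb the loss of $L^\infty$ control on the coefficient perturbation into the \emph{higher integrability} of the cell-problem gradient. The essential input is the classical Meyers estimate for divergence-form elliptic equations: there exist an exponent $p_0 = p_0(d,\alpha,\beta) \in (2,\infty)$ and a constant $C_M = C_M(d,\alpha,\beta)$, depending on the coefficient only through the ellipticity constants, such that for every $A \in \PD$ the unique solution $\chi \in \dot{H}^1(\Td;\Rd)$ of \eqref{eqn:cellprob} satisfies $\|\nabla\chi\|_{L^{p_0}(\Td;\Rdd)} \le C_M$; the right-hand side $\nabla\cdot A$ presents no obstruction since $A \in L^\infty(\Td;\Rdd) \subset L^{p_0}(\Td;\Rdd)$ with norm controlled by $\beta$. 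Given this $p_0$, I would set $q_0 := 2p_0/(p_0 - 2) \in (2,\infty)$, chosen so that $\tfrac12 + \tfrac1{p_0} + \tfrac1{q_0} = 1$; this is the threshold exponent in the statement, and for $q \in (q_0,\infty]$ one has the embeddings $L^q(\Td) \hookrightarrow L^{q_0}(\Td) \hookrightarrow L^2(\Td)$ since the torus has unit measure.

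For $\Aone, \Atwo \in \PD$ with solutions $\chione, \chitwo$, set $e = \chitwo - \chione$, subtract the weak formulations of \eqref{eqn:cellprob} for the two coefficients, and use $\nabla\chitwo\,\Atwo - \nabla\chione\,\Aone = \nabla e\,\Atwo + \nabla\chione(\Atwo - \Aone)$ to obtain, for all $\phi \in \dot{H}^1(\Td;\Rd)$,
\[
\int_{\Td}\nabla\phi : \bigl(\nabla e\,\Atwo\bigr)\dy
= -\int_{\Td}\nabla\phi : \bigl(\Atwo - \Aone\bigr)\dy
- \int_{\Td}\nabla\phi : \bigl(\nabla\chione(\Atwo - \Aone)\bigr)\dy .
\]
Testing with $\phi = e$, the left side is bounded below by $\alpha\|\nabla e\|_{L^2}^2$ by uniform ellipticity and symmetry of $\Atwo$; on the right, the first term is at most $\|\nabla e\|_{L^2}\|\Aone - \Atwo\|_{L^2}$ by Cauchy--Schwarz, and the second is at most $C\|\nabla e\|_{L^2}\|\nabla\chione\|_{L^{p_0}}\|\Aone - \Atwo\|_{L^{q_0}}$ by the generalized H\"older inequality with exponents $(2,p_0,q_0)$. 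Inserting the Meyers bound $\|\nabla\chione\|_{L^{p_0}} \le C_M$, using the embeddings above, and dividing by $\|\nabla e\|_{L^2}$ (the case $\nabla e = 0$ being immediate) gives
\[
\|\chitwo - \chione\|_{\dot{H}^1(\Td;\Rd)}
\le \frac{1 + C\,C_M}{\alpha}\,\|\Aone - \Atwo\|_{L^q(\Td;\Rdd)} ,
\]
so $G$ is globally Lipschitz on all of $\PD$ in the $L^q$ topology, in particular on any $K \subset \PD$.

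To extend this to a bounded Lipschitz map on the full space $L^q(\Td;\Rdd)$, I would precompose with an explicit $1$-Lipschitz retraction onto $\PD$. For $M \in \Rdd$, let $\Pi(M)$ be obtained by replacing $M$ with its symmetric part and then clamping its eigenvalues into $[\alpha,\beta]$; since this is the composition of the orthogonal projection of $\Rdd$ onto symmetric matrices with the metric projection of $(\Rdd_{\mathrm{sym}},|\cdot|_F)$ onto the closed convex spectral set $\{\alpha I \preceq \,\cdot\, \preceq \beta I\}$, it satisfies $|\Pi(M) - \Pi(N)|_F \le |M - N|_F$ and fixes every element of that set. Applying $\Pi$ pointwise defines $\hat P : L^q(\Td;\Rdd) \to \PD$ with $\|\hat P A - \hat P B\|_{L^q} \le \|A - B\|_{L^q}$ and $\hat P|_{\PD} = \mathrm{id}$. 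Then $\G := G \circ \hat P$ is Lipschitz from $L^q(\Td;\Rdd)$ to $\dot{H}^1(\Td;\Rd)$, agrees with $G$ on $\PD \supseteq K$, and is bounded, since $\|\G(A)\|_{\dot{H}^1} = \|G(\hat P A)\|_{\dot{H}^1} \le \sqrt d\,\beta/\alpha$ by Proposition~\ref{prop:stab_infty}(1).

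The main obstacle is the first step: one must ensure that the Meyers exponent $p_0$ and constant $C_M$ are genuinely uniform over the class $\PD$ — i.e. depend only on $d$, on the ratio $\beta/\alpha$, and on the (fixed) torus — so that the Lipschitz constant above does not degenerate; this is where all the real analysis lies. The remaining energy estimate, the choice of $q_0$, and the projection construction are routine, and the weak interpretation of \eqref{eqn:cellprob} for $A \notin C^1$ is the one already in force throughout the excerpt.
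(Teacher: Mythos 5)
Your proposal is correct and follows the same core strategy as the paper: a coercivity estimate for the difference of solutions in which the coefficient perturbation is split by H\"older into an $L^2$ part and an $L^{p}\times L^{q}$ part with $q = 2p/(p-2)$ (the paper's Lemma \ref{lem: Lpcont}), combined with a Meyers-type higher-integrability bound $\|\nabla\chi\|_{L^{p_0}} \le C$ uniform over $\PD$ (the paper's Lemma \ref{condP}), yielding exactly your threshold $q_0 = 2p_0/(p_0-2)$. The differences are in execution rather than substance. First, you subtract the two weak formulations directly and test with $e=\chitwo-\chione$, whereas the paper routes through an intermediate solution $\widetilde\chi$ of $-\nabla\cdot(\nabla\widetilde\chi\,\Atwo)=\nabla\cdot\Aone$ and a triangle inequality; both give the same two-term bound, and your version is marginally more compact. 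Second, you invoke the Meyers estimate as classical with constants depending only on $d$ and $\beta/\alpha$; this is indeed where the real work lies, and it is precisely the part the paper does not outsource: Lemmas \ref{lemma:reisz_transform}, \ref{lem:condPLaplace}, \ref{lem:guermond} and \ref{condP} reconstruct the estimate in the periodic setting via boundedness of the periodic Riesz transforms (by transference), real interpolation between $\dot H^1$ and $\dot W^{1,Q}$, and a Neumann-series perturbation of the Laplacian, so that the exponent $p_0$ and constant are explicit in $d$ and $\alpha/\beta$ (cf.\ Remark \ref{rem:q0}). Your citation is legitimate but gives less quantitative information. Third, you supply an explicit construction of the extension $\G$ by precomposing with the pointwise symmetrization-plus-eigenvalue-clamping retraction onto $\PD$, which is $1$-Lipschitz on $L^q$ (nonexpansiveness of the metric projection onto the convex spectral set) and, together with Proposition \ref{prop:stab_infty}(1), yields boundedness; the paper's proof of the proposition is silent on this extension step, so your argument actually fills in a detail the paper leaves implicit. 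No genuine gap, provided one accepts the uniform Meyers bound as a citable classical fact.
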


\begin{remark}
    Explicit upper bounds for $q_0$ in Proposition \ref{prop:stab_Lq} exist and are discussed in Remark \ref{rem:q0}. 
\end{remark}

\section{Microstructures}
\label{sec:M}
The main application area of this work is constitutive modeling. In this section we describe various classes of microstructures that our theory covers. In particular, we describe four classes of microstructures in two dimensions: 
\begin{enumerate}
    \item Smooth microstructures generated via truncated, rescaled log-normal random fields. 
    \item Discontinuous microstructures with smooth interfaces generated by Lipschitz star-shaped inclusions.
    \item Discontinuous microstructures with square inclusions.
    \item Voronoi crystal microstructures.
\end{enumerate}
Visualizations of examples of these microstructures may be found in 
Figure \ref{fig:microstructures}. \mt{We emphasize that all four examples
lead to functions in $\BV$, a fact that we exploit in Section \ref{sec:E} when showing that
our abstract analysis from Section \ref{sec:ApproxThms} applies to them all.}

\begin{figure}[h!]
    \centering
    \includegraphics[width = 0.65\textwidth]{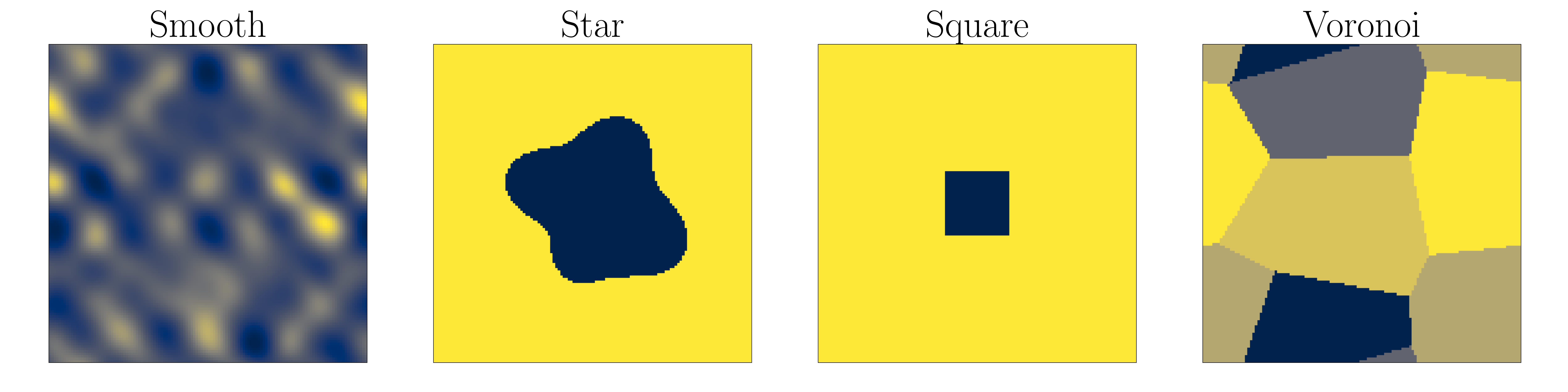}
    \caption{Microstructure Examples}
    \label{fig:microstructures}
\end{figure}
\FloatBarrier

\paragraph{Smooth Microstructures}
The smooth microstructures are generated by exponentiating a rescaled Gaussian random field.
$A$ is symmetric and coercive everywhere in the domain with a bounded eigenvalue ratio. Furthermore, the smooth function $A$ and its derivatives are Lipschitz. Our theory is developed specifically to analyze non-smooth microstructures, so this example is used mainly as a point of comparison.

\paragraph{Star Inclusions}
For the star inclusion microstructure, $A$ is taken to be constant inside and outside the star-shaped boundary. 
The boundary function is smooth and Lipschitz in each of its derivatives. $A$ is positive and coercive in both regions with a bounded eigenvalue ratio. This microstructure introduces discontinuities, but the boundary remains smooth.

\paragraph{Square Inclusions}
\mt{For the square inclusion microstructure, $A$ is taken to be constant inside and outside the square boundary.} Since we assume periodicity, without loss of generality the square inclusion is centered. The size of the square inclusion within the cell is varied between samples as are the constant values of $A$. This microstructure builds on the complexity of the star inclusion microstructure by adding corners to the inclusion boundary. 

\paragraph{Voronoi Interfaces}
The Voronoi crystal microstructures are generated by assuming a random Voronoi tessellation and letting $A$ be piecewise-constant taking a single value on each Voronoi cell. The values of $A$ on the cells and locations of the cell centers may be varied. This is the most complex microstructure among our examples and is a primary motivation for this work as Voronoi tessellations are a common model for crystal structure in materials. 

\section{Universal Approximation Results}
\label{sec:ApproxThms}

\mt{In this section we state the two approximation theorems for learning solution operators to the cell problem. Theorem \ref{thm:fno_cell} concerns learning the map $A \to \chi$ in equation \eqref{eqn:cellprob}, and Theorem \ref{thm:fno_hom} concerns learning the map $A \to \overline{A}$ described by the combination of equations \eqref{eqn:cellprob} and \eqref{eq:cell}. Theorems \ref{thm:fno_cell} and \ref{thm:fno_hom} are specific to learning a Fourier neural operator (FNO), which is a subclass of the general neural operator. The proofs of the theorems in this section may be found in Appendix \ref{apdx:approxthms}.}

\subsection{Definitions of Neural Operators}

First, we define a general neural operator (NO) and the Fourier neural operator (FNO). The definitions are largely taken from \cite{kovachki2023neural}, and we refer to this work for a more in-depth understanding of these operators. In this work, we restrict the domain to the torus. 

\begin{definition}[\bf{General Neural Operator}]  Let $\mathcal{A}$ and $\mathcal{U}$ be two Banach spaces of real vector-valued functions
over domain $\Td$. Assume input functions $a \in \mathcal{A}$ are $\R^{d_a}$-valued while the output functions $u \in \mathcal{U}$ are $\R^{d_u}$-valued. The neural operator architecture $\mathcal{G}_{\theta}:\mathcal{A} \to \mathcal{U}$ is
\begin{align*}
    \mathcal{G}_{\theta} &= \mathcal{Q} \circ \mathsf{L}_{T-1}\circ \dots \circ \mathsf{L}_0\circ \mathcal{P},\\
    v_{t+1} &= \mathsf{L}_t v_t  =\sigma_t(W_{t}v_t + \mathcal{K}_{t}v_t + b_{t}), \quad t=0,1, \dots, T-1
\end{align*}
with \(v_0 = \mathcal{P}(a)\), $u=\mathcal{Q}(v_T)$ and \(\mathcal{G}_\theta (a) = u\). Here, $\mathcal{P}: \R^{d_a} \to \R^{d_{v_0}}$ is a local lifting map, $\mathcal{Q}:\R^{d_{v_T}}\to\R^{d_u}$ is a local projection map
and the $\sigma_t$ are fixed nonlinear activation functions acting locally as maps $\R^{d_{v_{t+1}}} \to \R^{d_{v_{t+1}}}$ in each layer (with all of $\mathcal{P}$, $\mathcal{Q}$ and the $\sigma_t$ viewed
as operators acting pointwise, or pointwise almost everywhere, over the domain $\Td$), $W_t \in \R^{d_{v_{t+1}}\times d_{v_t}}$ are matrices, $\mathcal{K}_t: \{v_t: \Td \to \R^{d_{v_t}}\} \to \{v_{t+1}:\Td \to \R^{d_{v_{t+1}}}\}$ are integral kernel operators and $b_t: \Td \to \R^{d_{v_{t+1}}}$ are bias functions. For any $m \in \mathbb{N}_0$, the activation functions $\sigma_t$ are restricted to the set of continuous $\R \to \R$ maps which make real-valued, feed-forward neural networks dense in $C^m(\Rd)$ on compact sets for any fixed network depth.\footnote{We note that all globally Lipschitz, non-polynomial, $C^m(\R)$ functions belong to this class.} The integral kernel operators $\mathcal{K}_t$ are defined as
\[(\mathcal{K}_t v_t)(x) = \int_{\Td} \kappa_t (x,y) v_t(y) \: dy \]
with standard multi-layered perceptrons (MLP) \(\kappa_t : \Td \times \Td \to \R^{d_{v_{t+1}} \times d_{v_t}}\). We denote by $\theta$ the collection of parameters that specify $\mathcal{G}_{\theta}$, which include the weights $W_t$, biases $b_t$, parameters of the kernels $\kappa_t$, and the parameters describing the lifting and projection maps $\mathcal{P}$ and $\mathcal{Q}$ (usually also MLPs).
\label{def:GNO}
\end{definition}

The FNO is a subclass of the NO. 
\begin{definition}[\bf{Fourier Neural Operator}] The FNO inherits the structure and definition of the NO in Definition \ref{def:GNO}, together with some specific design choices. We fix $d_{v_t} = d_v$ for all $t$, where $d_v$ is referred to as the number of channels, or model width, of the FNO. We fix $\sigma_t=\sigma$ to be a globally Lipschitz, non-polynomial, $C^{\infty}$ function.\footnote{In this work in all numerical experiments we use the GeLU activation function as in \cite{li2021fourier}.} Finally, the kernel operators $\mathcal{K}_t$ are parameterized in the Fourier domain in the following manner. Let 
\begin{equation*}
    \psi_k(x) = \text{e}^{2\pi i \langle k, x \rangle}, \quad x \in \Td, \; k \in \mathbb{Z}^d,
\end{equation*}
denote the Fourier basis for $L^2(\Td;\mathbb{C})$ where \(i = \sqrt{-1}\) is the imaginary unit.
Then, for each $t$, the kernel operator $\mathcal{K}_t$ is parameterized by 
\begin{equation*}
    (\mathcal{K}_t v_t)_l (x) = \sum_{\substack{k \in \mathbb{Z}^d \\ |k| \leq k_{\max}}} \left(\sum_{j = 1}^{d_v}P_{lj}^k \langle (v_t)_j, \overline{\psi}_k\rangle_{L^2(\Td;\mathbb{C})}\right)\psi_k(x).
\end{equation*}
Here, \(l=1,\dots,d_{v}\) and each $P^k \in \mathbb{C}^{{d_{v}} \times d_{v}}$ constitute the learnable parameters of the integral operator.
\end{definition}

%$$ (Kv)(x) = \mathcal{F}^{-1}(P(\mathcal{F}v))(x)$$
%where $P \in \mathbb{C}^{d_v \times d_v}$ for all $k$. Truncating the number of Fourier modes in each dimension at $k_{\max}$, $P$ becomes a tensor in $\mathbb{C}^{k_{\max}^d \times d_v \times d_v}$, and the truncated kernel operator becomes
%\begin{equation*}
%    (\mathcal{K}v)_j(x) = \sum_{k \in \mathbb{Z}^d \; |k| \leq k_{\max}} \left(\sum_{i = 1}^{d_v}P_{kji} \langle v_i, \psi_k\rangle_{L^2(\Td;\mathbb{C})}\right)\psi_k(x)
%\end{equation*}

From the definition of the FNO, we note that parameterizing the kernels in the Fourier domain allows for efficient computation using the FFT. We refer to \cite{kovachki2023neural,li2021fourier} for additional details. 

Finally we observe that in numerous applications, an example being
learning of the map  \(A \mapsto \bar{A}\) \eqref{eq:cell}, \eqref{eqn:cellprob}, it is desirable to modify the FNO so that the output space is simply a Euclidean space, and not a function space;
this generalization is explored in \cite{nickdanielmargaret2023}.
An alternative approach, exemplified by Theorem \ref{thm:fno_hom} in the
next subsection, is to allow the FNO output to be a function that may be evaluated at any point in the domain to yield an approximation of the point in Euclidean space.

\subsection{Main Theorems}

These two theorems guarantee the existence of an FNO approximating the maps $A \mapsto \chi$ and $A \mapsto \overline{A}$ and are based on the stability estimate for continuity from $L^2 \to \dot{H}^1$ obtained in Proposition \ref{prop:cellproblem_cont_L2}. Both theorems are proved in Appendix \ref{apdx:approxthms}.
\begin{restatable}{theorem}{thmLtwochi}

\label{thm:fno_cell}
Let \(K \subset \PD \) and define the mapping \(G : K \to \dot{H}^1 (\Td;\Rd)\) by \(A \mapsto \chi\) as given by \eqref{eqn:cellprob}. \mt{Assume in addition that $K$ is compact in $L^2(\Td;\Rdd)$. Then, for any \(\epsilon > 0\), }there exists an FNO \(\Psi : K \to \dot{H}^1(\Td;\Rd)\) such that
\[\sup_{A \in K} \|G(A) - \Psi(A)\|_{\dot{H}^1} < \epsilon.\]
\end{restatable}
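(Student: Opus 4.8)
The plan is to combine the stability result of Proposition~\ref{prop:cellproblem_cont_L2} with an off-the-shelf universal approximation theorem for the FNO. The key observation is that Proposition~\ref{prop:cellproblem_cont_L2} already does the analytic heavy lifting: it extends $G$ from the closed set $K$ to a map $\G \in C(L^2(\Td;\Rdd); \dot H^1(\Td;\Rd))$ that agrees with $G$ on $K$. So the statement to prove reduces to: a continuous operator between these two Banach function spaces can be approximated uniformly on a set that is compact in the \emph{input} topology $L^2(\Td;\Rdd)$ by an FNO. This is exactly the form of the universal approximation theorem for FNOs from \cite{kovachki2021universal}, which I would invoke directly.

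Concretely, the steps are as follows. First, apply Proposition~\ref{prop:cellproblem_cont_L2} to the compact set $K$ to obtain the continuous extension $\G$, so that $\G|_K = G$ and it suffices to approximate $\G$ uniformly over $A \in K$. Second, check the hypotheses needed to invoke the FNO universal approximation theorem: the input space $L^2(\Td;\Rdd)$ and output space $\dot H^1(\Td;\Rd)$ are separable Hilbert spaces of functions on the torus (note $\dot H^1$ sits inside $H^1$, on which the FNO is naturally analyzed, and the mean-zero and vector-valued structure is handled by the lifting/projection maps $\mathcal P, \mathcal Q$ and a final projection onto the mean-zero subspace, which is itself a bounded linear — hence FNO-representable — operation); $\G$ is continuous; and $K$ is compact in the input topology. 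Third, quote the theorem to get, for any $\epsilon>0$, an FNO $\Psi$ with $\sup_{A\in K}\|\G(A)-\Psi(A)\|_{H^1} < \epsilon$, and since the $H^1$ and $\dot H^1$ norms coincide on mean-zero functions (and we can post-compose with the mean-zero projection at no cost), this gives $\sup_{A\in K}\|G(A)-\Psi(A)\|_{\dot H^1}<\epsilon$. Finally, restrict $\Psi$ to $K$ to obtain the asserted map $\Psi: K \to \dot H^1(\Td;\Rd)$.

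The step I expect to require the most care — though not deep difficulty — is the bookkeeping needed to fit the problem into the exact framework of the cited FNO approximation theorem: ensuring that the target space being $\dot H^1$ rather than $L^2$ or $H^1$ causes no trouble (the FNO outputs functions in $L^2$/$H^1$-type spaces, and one must argue that composing with the bounded linear projection onto the mean-zero subspace, or alternatively absorbing it into the last layer, keeps the architecture an FNO and preserves the approximation bound), and that the vector-valued, $\Rdd$-valued input and $\Rd$-valued output are accommodated by the channel dimensions $d_a = d^2$, $d_u = d$ in Definition~\ref{def:GNO}. One should also confirm that the regularity/continuity of $\G$ asserted in Proposition~\ref{prop:cellproblem_cont_L2} is in precisely the topology (input $L^2$, output $H^1$) demanded by the approximation theorem; if the cited theorem is stated for continuous operators between $C(\Td)$ or $L^2(\Td)$ spaces, a short density/mollification remark may be needed, but no new estimates are required. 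All the genuinely PDE-specific analysis is already encapsulated in the stability proposition.
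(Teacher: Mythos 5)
Your proposal is correct and follows essentially the same route as the paper's proof: extend $G$ to $\G \in C(L^2(\Td;\Rdd);\dot{H}^1(\Td;\Rd))$ via Proposition~\ref{prop:cellproblem_cont_L2}, invoke the FNO universal approximation theorem of \cite{kovachki2021universal} (Theorem 5) on the $L^2$-compact set $K$, and restrict back to $K$. The additional bookkeeping you flag (mean-zero projection, channel dimensions) is harmless and does not change the argument.
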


\begin{restatable}{theorem}{thmLtwoAbar}
    \label{thm:fno_hom}
Let \(K \subset \PD\) and define the mapping \(F : K \to \Rdd\) by \(A \mapsto \bar{A}\) as given by \eqref{eq:cell}, \eqref{eqn:cellprob}. \mt{Assume in addition that $K$ is compact in $L^2(\Td;\Rdd)$.} Then, for any \(\epsilon > 0\), there exists an FNO \(\Phi: K \to L^{\infty}(\Td;\Rdd)\) such that
\[\mt{\sup_{A \in K} \sup_{x \in \Td}|F(A) - \Phi(A)(x)|_F < \epsilon.}\]
\end{restatable}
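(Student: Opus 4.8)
The plan is to reduce the statement to Theorem \ref{thm:fno_cell}, already available, plus the elementary observation that the map $(\chi, A) \mapsto \overline{A}$ in \eqref{eq:cell} is a continuous quadrature functional. Concretely, write $\overline{A} = \int_{\Td}\bigl(A(y) + A(y)\nabla\chi(y)^T\bigr)\dy$ and note that the integrand depends bilinearly on $A$ and $\nabla\chi$. The first step is a stability estimate: for $A^{(1)}, A^{(2)} \in K$ with corresponding correctors $\chi^{(1)}, \chi^{(2)}$, using $\|A^{(i)}\|_{L^\infty}\le C(\beta)$, $\|\chi^{(i)}\|_{\dot H^1}\le \sqrt d\,\beta/\alpha$ from Proposition \ref{prop:stab_infty}, and Cauchy--Schwarz,
\begin{equation*}
|\overline{A}^{(1)} - \overline{A}^{(2)}|_F \le C\bigl(\|A^{(1)}-A^{(2)}\|_{L^2} + \|\nabla\chi^{(1)} - \nabla\chi^{(2)}\|_{L^2} + \|A^{(1)}-A^{(2)}\|_{L^2}\bigr),
\end{equation*}
so that $F$ factors as $A \mapsto (A, G(A)) \mapsto \overline{A}$ with the second map Lipschitz from $L^2(\Td;\Rdd)\times\dot H^1(\Td;\Rd)$ into $\Rdd$. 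Hence $F: K \to \Rdd$ is continuous when $K$ carries the $L^2$ topology.

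Next I would reduce the Euclidean-valued target to the function-valued output demanded by the FNO definition. Given $\epsilon > 0$, first apply Theorem \ref{thm:fno_cell} to obtain an FNO $\Psi: K \to \dot H^1(\Td;\Rd)$ with $\sup_{A\in K}\|G(A)-\Psi(A)\|_{\dot H^1} < \delta$, for a $\delta$ to be chosen. One then wants to post-compose $\Psi$ with the quadrature map. The subtlety is that $\overline{A}$ involves $A$ itself, not just $\chi$; since $A$ is the FNO input, I would have the lifting layer carry a copy of $A$ through the network (or, equivalently, build a parallel branch), so that the penultimate representation encodes both an approximation $\tilde\chi$ of $\chi$ and the field $A$. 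The final projection/quadrature step must then realize $(\tilde\chi, A)\mapsto \int_{\Td}(A + A\nabla\tilde\chi^T)\,\dy$, viewed as a constant function on $\Td$. Spatial averaging $\int_{\Td}(\cdot)\dy$ is exactly the $k=0$ Fourier mode, so it is implemented by a single Fourier layer with $k_{\max}=0$; the pointwise bilinear product $A\nabla\tilde\chi^T$ and the derivative $\nabla\tilde\chi$ (the latter again a Fourier multiplier) are approximated on the relevant compact set by the MLP components $W_t, b_t$ and $\mathcal Q$, using the universal approximation property of the activation $\sigma$ built into Definition \ref{def:GNO}. Combining the Lipschitz bound of the first paragraph with the $\dot H^1$-closeness of $\tilde\chi$ to $\chi$, and choosing $\delta$ and the MLP approximation tolerances small enough, yields an FNO $\Phi$ with $\sup_{A\in K}\sup_{x\in\Td}|F(A)-\Phi(A)(x)|_F < \epsilon$ (the $\sup_x$ being harmless since $\Phi(A)$ is the constant function $\overline{A}$ up to the approximation error).

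The main obstacle is the last step: making rigorous that an FNO can realize the composition ``differentiate $\tilde\chi$, multiply pointwise by $A$, then spatially average,'' uniformly over $A\in K$. The differentiation and averaging are linear Fourier multipliers and hence exactly expressible, but the pointwise product of two functions whose values range over a compact subset of Euclidean space must be approximated by the local MLP layers, and one must check that the inputs to those layers indeed lie in a fixed compact set — this follows from the a priori bounds $\|A\|_{L^\infty}\le\sqrt d\,\beta$ and, crucially, an $L^\infty$ (not merely $L^2$) bound on $\nabla\tilde\chi$, which for the truncated-Fourier output of an FNO is controlled in terms of $k_{\max}$ and $\|\tilde\chi\|_{L^2}$. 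An alternative that sidesteps the product entirely is to observe $\overline{A}_{ij} = \int_{\Td} e_i^T A(y)(e_j + \nabla\chi_j(y))\,\dy = \int_{\Td}(A\nabla w_j)\cdot e_i\,\dy$ where $w_j(y) = y_j + \chi_j(y)$, and to exploit the variational identity $\overline{A}_{ij} = \int_{\Td}\nabla w_i \cdot A\nabla w_j\,\dy$ valid for correctors; but this still requires a bilinear product inside the integral, so I would proceed with the direct approach above and absorb the product approximation into the MLP layers, following the template of the FNO universal approximation theorem of \cite{kovachki2021universal}.
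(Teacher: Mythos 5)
Your argument reaches the result by a genuinely different route from the paper. The paper's proof is a two-line reduction: Lemma \ref{lemma:extended_map_L2} shows that $F$ itself (not just $G$) extends, via Proposition \ref{prop:cellproblem_cont_L2} and the Tietze extension theorem, to a bounded continuous map $\mathcal{F}\in C\bigl(L^2(\Td;\Rdd);\Rdd\bigr)$, and the FNO universal approximation theorem of \cite{kovachki2021universal} is then applied to $\mathcal{F}$ exactly as in the proof of Theorem \ref{thm:fno_cell}, with $\bar{A}$ regarded as a constant function on $\Td$. Your first paragraph essentially re-proves Lemma \ref{lemma:extended_map_L2} (in a sharper, quantitative form, which is fine), but you then take the constructive path: post-compose the corrector FNO of Theorem \ref{thm:fno_cell} with hand-built layers realizing differentiation, the pointwise product with $A$, and the $k=0$ averaging mode. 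That route can be made rigorous --- it is close in spirit to how the universal approximation theorems in \cite{kovachki2021universal} are themselves proved --- but it is precisely where your acknowledged obstacle lives, and two of your supporting claims need repair: (i) the output of an FNO is not a truncated Fourier series (the activations and the pointwise MLP $\mathcal{Q}$ destroy band-limitation), so the $L^\infty$ control of $\nabla\tilde\chi$ ``in terms of $k_{\max}$'' requires inserting an explicit band-limiting Fourier layer and then using compactness of $\Psi(K)$ in $\dot H^1(\Td;\Rd)$ to make the truncation error uniform over $K$; (ii) carrying a copy of $A$ unchanged through the nonlinear layers needs an argument, since the activation acts on those channels as well. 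The paper's reduction bypasses all of this: because the Euclidean-valued map $F$ is already continuous on the $L^2$-compact set $K$, the black-box theorem supplies the FNO directly, and no quadrature, product, or derivative need ever be realized inside the network. What your approach buys is explicitness --- an interpretable corrector branch plus quadrature head, with an error budget split between the $\dot H^1$ tolerance $\delta$ and the product/truncation tolerances --- at the price of substantially more architectural bookkeeping.
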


The above approximation results can also be formulated to hold, on average, over any probability measure with a finite second moment that is supported on $\PD$. In particular, if we let $\mu$ be such a probability measure then there exists an FNO or a neural operator \(\Psi\) such that
\begin{equation}
\label{eq:QQQ}
\mathbb{E}_{A \sim \mu} \|G(A) - \Psi(A)\|_{\dot{H}^1} < \epsilon.
\end{equation}
This follows by applying Theorem 18 from \cite{kovachki2021universal} in the respective proofs instead of Theorem 5 from the same work.  We do not carry out the full details here. While this allows approximation over the non-compact set $\PD$, the error can only be controlled on average instead of uniformly. \mt{In Section \ref{sec:E}, inputs are generated via  probability measures supported on compact subsets of $L^2$; 
thus both the approximation Theorem \ref{thm:fno_cell}, and 
its analog in the form \eqref{eq:QQQ}, are relevant.}

\section{Numerical Experiments}
\label{sec:E}
In this section, we show that it is possible to find good operator approximations of the homogenization map \eqref{eq:themap}, defined
by \eqref{eqn:cellprob}, in practice.
We focus on use of the FNO and note that, while
Theorems \ref{thm:fno_cell} and \ref{thm:fno_hom} assert the
existence of desirable operator approximations, they are not constructive and do not come equipped with error estimates.
\mt{We find approximations using standard empirical loss minimization
techniques and, by means of numerical experiments, quantify the complexity with respect to volume of
data and with respect to size of parametric approximation.}

\mt{We work with the microstructures from Section \ref{sec:M}. In
this context we note that Theorems \ref{thm:fno_cell} and \ref{thm:fno_hom} apply. To demonstrate this it is necessary
to establish that the subsets of coefficient functions employed are compact in $L^2$. We achieve this by noting that all our sets of coefficient functions are contained in $\PD \cap \BV$, as defined in Subsection \ref{ssec:PF}. Then we use Lemma \ref{lem:BVLinfty_L2compact} to establish compactness of these subsets of coefficient functions in $L^2$.} The smooth microstructure example serves as a comparison case
for examining the impact of discontinuous coefficients on the learning accuracy.  \mt{The remaining three examples present different 
approximation theoretic challenges including curved boundaries
(star inclusions), corners (square inclusions) and junctions of
several domains (Voronoi).}

The experiments are all conducted using an FNO with a fixed number
$T=4$ of hidden layers. The two remaining parameters to vary are the
channel width $d_v$ and the number of Fourier modes $k_{max}$. \mt{For implementation details, see Appendix \ref{apd:implementation}.}
We make the following observations based on the numerical experiments.
\begin{enumerate}
\item The effective $\overline{A}$ tensors computed from the model predicted solutions exhibit \mt{relative error under $1\%$} for all examples; \mt{the effective  $\overline{A}$ is computed from the learned cell problem solution $\chi$ using equation \eqref{eq:cell}.}
    \item The error in the learned $\chi$ is significantly higher along discontinuous material boundaries and corner interfaces, as expected. However, the FNO operator approximation is able to approximate the solution with reasonable relative error even for the most complex case; this most complex case concerns the set of input functions with varying Voronoi geometry and varying microstructural properties within the domain.
    \item In comparison with the smooth microstructure case, learning the map for the Voronoi microstructure requires substantially
    more data to avoid training a model which plateaus at a poor
    level of accuracy.
    \item When compared with the smooth microstructure case, the error for the \linebreak Voronoi microstructure decreases more slowly with respect to increasing \linebreak model width, but shows more favourable response with
    respect to increasing the number of Fourier modes.
    \item \mt{Models trained at one discretization may be evaluated at different discretizations for both the smooth and Voronoi microstructures as is characteristic of the FNO. The Voronoi microstructure exhibits, empirically, greater
    robustness to changes in discretization.} 
    \end{enumerate}
    
We first describe implementation details of each of the microstructures in Subsection \ref{ssec:micro_imple}. Then we show outcomes of the numerical experiments in Subsection \ref{ssec:results},  
discussing them in Subsection \ref{ssec:discussion}.

\subsection{Microstructure Implementation}\label{ssec:micro_imple}
\vspace{0.1cm}

For each microstructure, two positive eigenvalues and three components of the two eigenvectors are randomly generated, and the final eigenvector component is chosen to enforce symmetry. All eigenvalue ratios are at most $e^2$ by construction. In this manner, $A$ is symmetric and coercive and has a bounded eigenvalue ratio.

\paragraph{Smooth Microstructures}
The smooth microstructures are generated by exponentiating a rescaled approximation of a Gaussian random field. 
\mt{The random field used to generate the eigenvalues and three eigenvector components of $A(x)$} is
as follows:
\begin{align*}\label{eqn:smooth_eqn}
    \widehat{\lambda}_i(x) &= \sum_{k_1,k_2 = 1}^4 \xi^{(1)}_{k_1,k_2} \sin\left(2\pi k_1 x_1\right)\cos(2\pi k_2 x_2) + \xi^{(2)}_{k_1,k_2}\cos\left(2\pi k_1 x_1\right)\sin\left(2\pi k_2 x_2\right),\\
    \lambda_i(x) & = \exp\left(\frac{\widehat{\lambda}_i(x)}{\max_{x' \in [0,1]^2} |\widehat{\lambda}_i(x')|}\right),
\end{align*}
where $\xi^{(j)}_{k_1,k_2}$ are i.i.d. normal Gaussian random variables. 

\paragraph{Star-Shaped Inclusions}
The star-shaped inclusions are generated by defining a random Lipschitz polar boundary function as 
\begin{align*}
    r(\theta) & = \mathsf{a} + \mathsf{b} \sum_{k = 1}^5 \xi_k \sin(k \theta) 
\end{align*}
where $\xi_k$ are \mt{i.i.d. uniform random variables $U[-1,1]$, and $\mathsf{a}$ and $\mathsf{b}$ are constants that guarantee $0 < \epsilon < r < 0.5-\epsilon$ for some fixed $\epsilon >0$.}
Then $A(x)$ is constant inside and outside the boundary. We randomly sample eigenvalues for $A$ on each domain 
\mt{via $\lambda_i \sim U[e^{-1},e]$}. The three components of 
the eigenvectors are i.i.d. normal random variables.

\paragraph{Square Inclusions}
The radius of the square is randomly generated via 
\begin{equation*}
    r = \mathsf{a} + \mathsf{b} \zeta
\end{equation*}
where $\zeta$ is a uniform random variable on $[0,1]$ and \mt{$\mathsf{a}$ and $\mathsf{b}$ are positive constants that guarantee $0 < \epsilon < r < 0.5-\epsilon$ for some fixed $\epsilon >0$.} The values of $A$ on 
each of the constant domains are chosen in the same manner as in the star-shaped inclusion case.

\paragraph{Voronoi Interfaces}
The Voronoi crystal microstructure has constant $A$ on each Voronoi cell and is chosen uniformly at random in the same manner as for the star inclusions. Voronoi tessellations are a common model for crystal structure in materials. In one Voronoi example,  we fix the geometry for all data, and in a second Voronoi example we vary the geometry by randomly sampling five cell centers from a uniform distribution on the unit square. 

\subsection{Results}\label{ssec:results}
Each FNO model is trained using the empirical estimate of the mean squared $H^1$ norm: 
\begin{equation}
    \text{Loss}(\theta) = \frac{1}{N}\sum_{n = 1}^N \Bigl(\|\chi^{(n)} - \widehat{\chi}^{(n)}\|^2_{L^2} + \|\nabla \chi^{(n)} - \nabla \widehat{\chi}^{(n)}\|^2_{L^2}\Bigr)
\end{equation}
where $n$ is the sample index, $\chi$ is the true solution, and $\widehat{\chi}$ is the FNO approximation of the solution parameterized by $\theta.$ In the analysis, we examine several different measures of error, including the following relative $H^1$ and relative $W^{1,10}$ errors. 
\begin{subequations}\label{eqn:errors}
% \begin{equation}
%     \text{Scaled $H^1$ Error (SHE)}  = \frac{\sum_{n=1}^N \Bigl(\|\chi^{(n)}_1 - \widehat{\chi}^{(n)}_1\|^2_{L^2} + \|\nabla \chi_1^{(n)} - \nabla \widehat{\chi}^{(n)}_1\|^2_{L^2}\Bigr)^{1/2}}{\sum_{n=1}^N \left(\|\chi_1^{(n)}\|^2_{L^2} + \|\nabla \chi_1^{(n)}\|^2_{L^2}\right)^{1/2}}
%     \end{equation}
    \begin{equation}
    \text{Relative $H^1$ Error (RHE)} = \frac{1}{N}\sum_{n=1}^N \left(\frac{\|\chi^{(n)}- \widehat{\chi}^{(n)}\|^2_{L^2} + \|\nabla \chi^{(n)} - \nabla \widehat{\chi}^{(n)}\|^2_{L^2}}{\|\chi^{(n)}\|^2_{L^2} + \|\nabla\chi^{(n)}\|^2_{L^2}}\right)^{\frac{1}{2}} 
    \end{equation}
    \begin{equation}
    \text{Relative $W^{1,10}$ Error (RWE)} = \frac{1}{N}\sum_{n=1}^N \left(\frac{\|\chi^{(n)}- \widehat{\chi}^{(n)}\|^{10}_{L^{10}} + \|\nabla \chi^{(n)} - \nabla \widehat{\chi}^{(n)}\|^{10}_{L^{10}}}{\|\chi^{(n)}\|^{10}_{L^{10}} + \|\nabla\chi^{(n)}\|^{10}_{L^{10}}}\right)^{\frac{1}{10}} 
    \end{equation}
\end{subequations}

\mt{The $W^{1,10}$ norm gives a sense of the higher errors that occur at interfaces, corners and functions. We could have used $W^{1,p}$ 
for any $p$ large enough.}

%is a useful surrogate for pointwise ($W^{1,\infty}$) error; it

Finally, we also look at error in $\overline{A}$, which we scale by the difference between the arithmetic and harmonic mean of $A$. Any effective $\overline{A}$ should have a norm in this range; \mt{these are known in mechanics as Voigt-Reuss bounds and have a physical interpretation as bounds obtained via energy principles by ignoring equilibrium for the upper bound (arithmetic mean) and ignoring compatibility for the lower bound (harmonic mean) \cite{hill1952elastic}.} The resulting error measure is given by 
\begin{equation}\label{eqn:err_A}
    \text{Relative $\overline{A}$ Error (RAE) } = \frac{\|\overline{A} - \widehat{\overline{A}}\|_F}{a_m - a_h} 
\end{equation}
where the arithmetic mean $a_m$ and harmonic mean $a_h$ are given by 
\begin{align*}
    a_m & = \left\|\int_{\mathbb{T}^2} A(x) \dx \right\|_F\\
    a_h & = \left\|\left(\int_{\mathbb{T}^2} A^{-1}(x)\dx \right)^{-1}\right\|_F.
\end{align*}
We note that using $a_m-a_h$ rather than $\|\overline{A}\|_F$ as a scaling factor in equation \eqref{eqn:err_A} leads to a larger error value, so achieving low error in this measure of distance is harder.

We train models on five different datasets. Visualizations of the median-error test samples for each example may be viewed in Figure \ref{fig:results_vis}, and the numerical errors are shown in Figure \ref{tab:Errors}. Each of these models is trained on $9500$ data samples generated using an FE solver on a
triangular mesh with the solution interpolated to a $128 \times 128$ grid. \mt{Additional model details may be found in Appendix \ref{apd:implementation}.} 

\mt{We perform an experiment to test the discretization-robustness of the FNO model, results of which are shown in Figure \ref{fig:disc-err}. The models are trained with data from the resolution $128 \times 128$ and evaluated on test data with different resolution. We emphasize that evaluating the FNO on different resolution is trivial in implementation by design.}

\mt{We also investigate the effects of the number of training data and the model size on the error for the smooth and Voronoi microstructures; 
similar experiments, for different operator learning problems,
are presented in \cite{de2022cost}.} A plot of error versus training data may be found in Figure \ref{fig:data-size-compare}, and plots of error versus the number of Fourier modes for fixed total model size, as measured by (model width) $\times$ (number Fourier modes), may be found in Figure \ref{fig:err-vs-size}.
Figure \ref{fig:err-vs-size} addresses the question of how to
optimally distribute computational budget through different parameterizations to achieve minimum error at given cost as measured
by number of parameters; it should be compared to similar experiments
in \cite{lanthaler2023nonlocal}.

\begin{figure}\label{fig:results_vis}
\centering
\includegraphics[width = 1.0\textwidth]{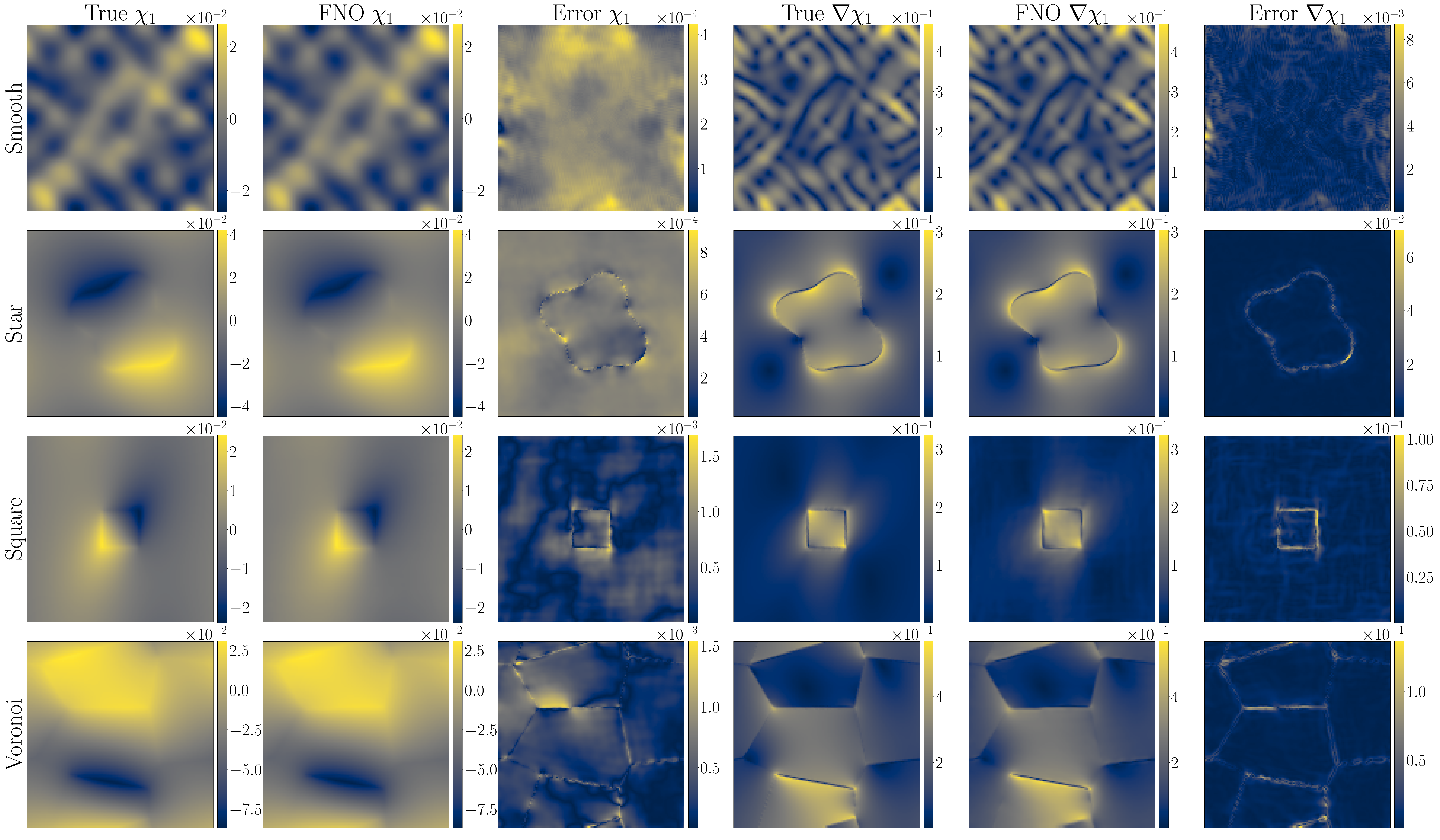}
\caption{\mt{Visualization of the trained models evaluated on test samples that gave median relative $H^1$ error for each microstructure. The microstructure inputs of each row correspond to those of Figure \ref{fig:microstructures}. The first shows the true $\chi_1$, the second shows the $FNO$ predicted $\chi_1$, and the third shows the absolute value of the error between the true and predicted $\chi_1$. The fourth column shows the 2-norm of the gradient of the true $\chi_1$, and the fifth shows the 2-norm of the gradient of the predicted $\chi_1$. The last column shows the 2-norm of the difference between the two gradients.}}
\end{figure}

\begin{figure}
\vspace{0.1cm}
\begin{minipage}{0.63\textwidth}
\begin{adjustbox}{width=\textwidth}
\resizebox{\textwidth}{!}{%
\begin{tabular}{|| c | c | c |  c ||}
 \hline
 Microstructure & Mean RHE  & Mean RWE & Median RAE\\  
 \hline\hline
 Smooth  &   $0.0062 \pm 1\cdot10^{-4}$ &  $0.0091\pm 1\cdot 10^{-4}$ &  $0.0007\pm 1\cdot 10^{-5}$ \\ 
 \hline
 Star  &  $0.0313 \pm 1\cdot 10^{-4}$ & $0.1318\pm 5\cdot 10^{-4}$&  $0.0014 \pm 3\cdot 10^{-5}$\\
 \hline
 Square  &  $0.1012 \pm 5 \cdot 10^{-4}$ &  $0.2741\pm 2\cdot 10^{-3}$&  $0.0047 \pm 1\cdot 10^{-4}$\\
 \hline
Voronoi &  $0.0565 \pm 4\cdot 10^{-4}$&  $0.2129\pm 3\cdot 10^{-3}$&  $0.0027 \pm 8\cdot 10^{-5}$ \\
 \hline
 \makecell{Voronoi \\ (Fixed Geometry)}  &  $0.0073\pm 3\cdot10^{-5}$ &  $0.0140\pm 3\cdot10^{-4}$ &  $0.0007 \pm 2\cdot 10^{-5}$\\
 \hline
\end{tabular}}
\end{adjustbox}
 \caption{\mt{Errors for each each numerical experiment; five sample models are trained for each microstructure. The expressions for the RHE (Relative $H^1$ Error), RWE (Relative $W^{1,10}$ Error) and RAE (Relative $\overline{A}$ Error) may be found in equations \eqref{eqn:errors} and \eqref{eqn:err_A}. The errors are evaluated over a test set of size $500$. All examples have varying geometry except the second Voronoi example.}}
 \label{tab:Errors}
 \end{minipage}\hfill
 \begin{minipage}{0.33\textwidth}
 \vspace{-0.7cm}
    \includegraphics[width = 0.9\textwidth]{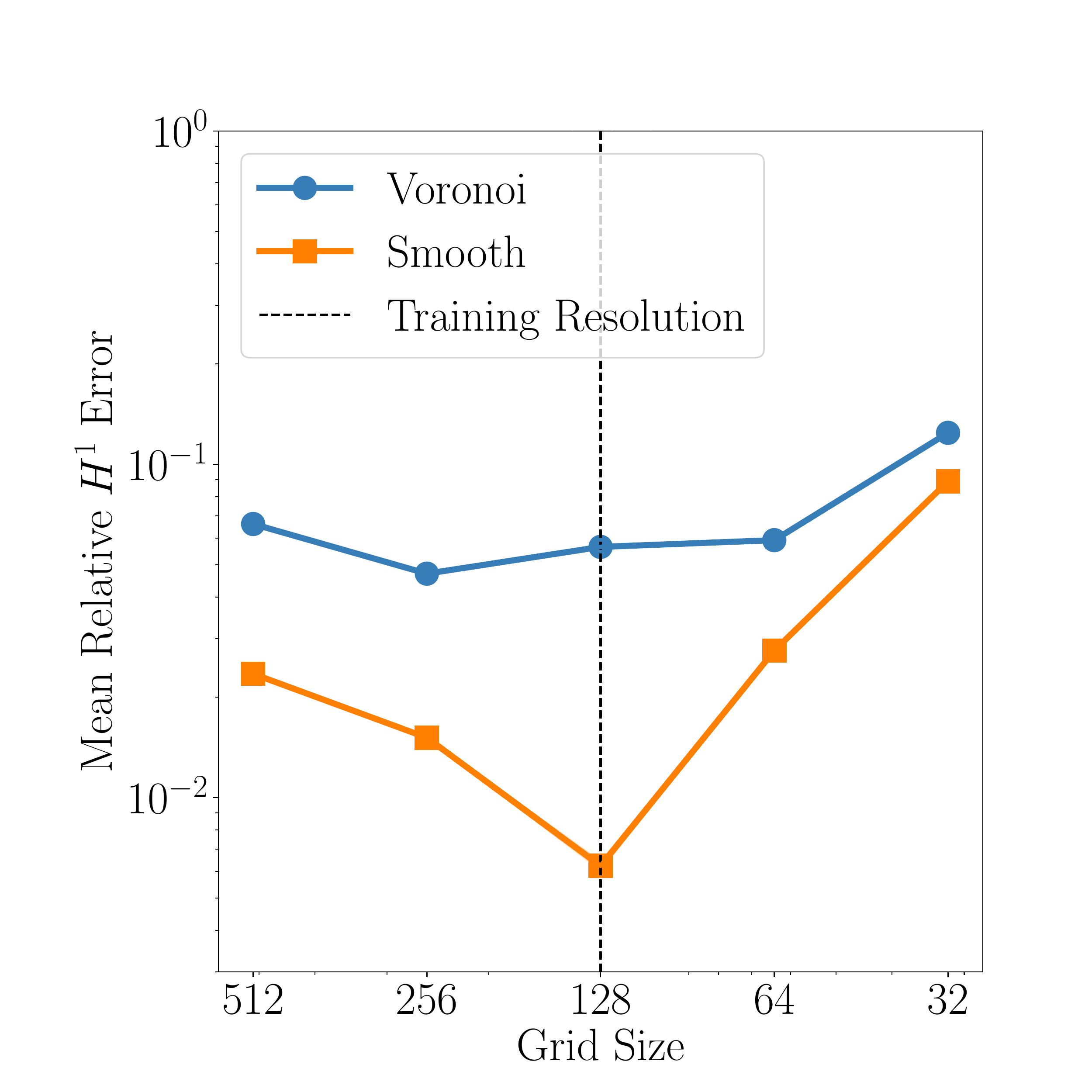}\vspace{-0.5cm}
    \caption{\mt{Five sample models trained on Smooth and Voronoi data at $128 \times 128$ grid resolution evaluated at different resolutions.}}
    \label{fig:disc-err}
  \end{minipage}
\end{figure}
\begin{figure}
  \begin{minipage}[t]{0.32\textwidth}
    \includegraphics[height = 3.8cm]{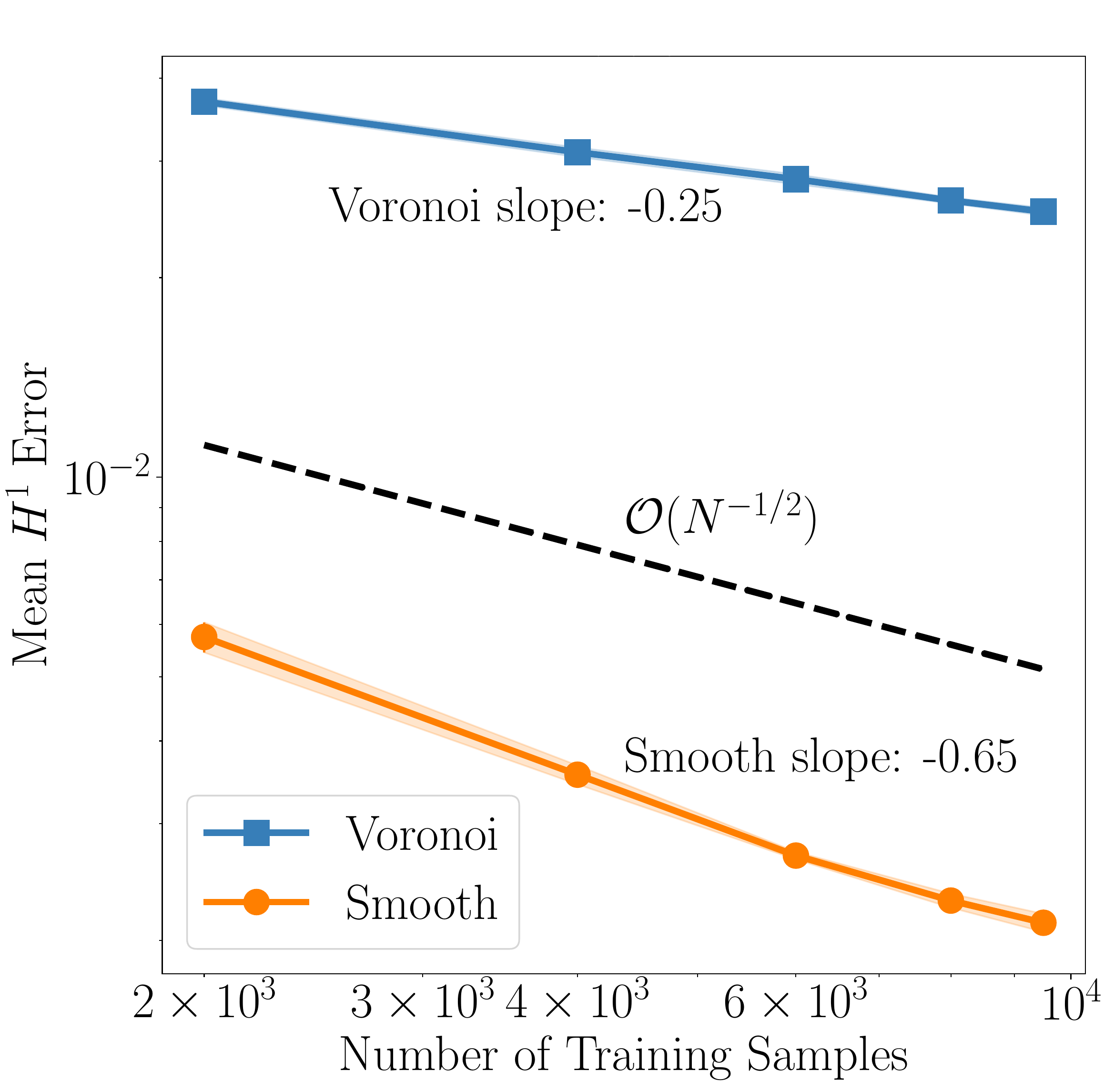}
    \caption{\mt{A comparison of test error for different amounts of training data for models trained on Voronoi and Smooth data. Five sample models are used for each data point.}}
    \label{fig:data-size-compare}
  \end{minipage}\hfill
  \begin{minipage}[t]{0.65\textwidth}
    \includegraphics[height = 3.96cm]{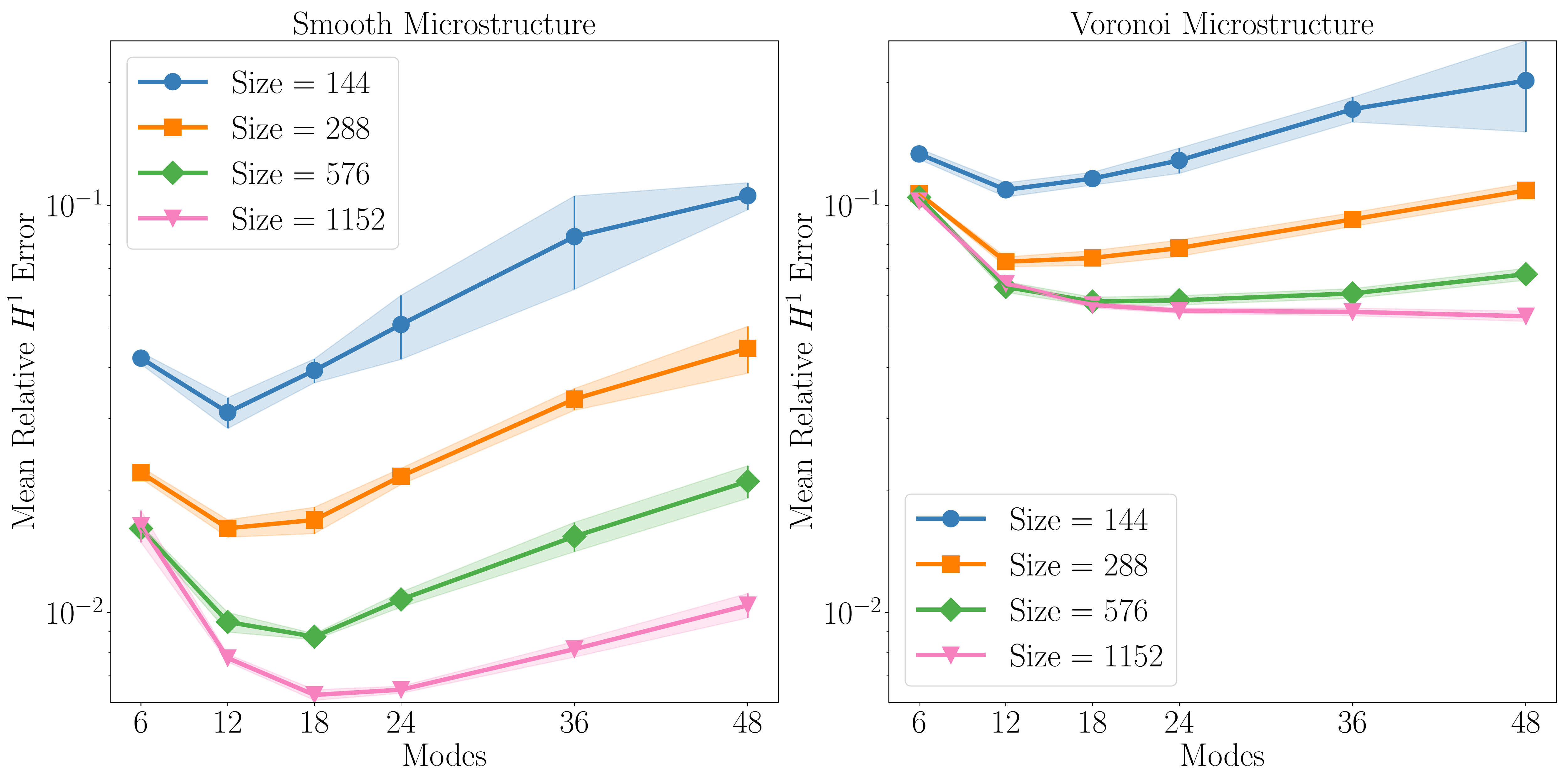}
    \caption{Relative $H^1$ error versus model size for the smooth and Voronoi examples with varying geometry. The number of Fourier modes in each direction and the model width were varied. Each line indicates a constant product of modes$\times$width. Training data size was fixed at 9500 samples, and five samples were used for each data point.}
    \label{fig:err-vs-size}
  \end{minipage}
\end{figure}

\subsection{Discussion}\label{ssec:discussion}

As can be seen from the data in Figure \ref{tab:Errors}, the microstructures exhibiting discontinuities lead to higher model error than the smooth microstructure, \mt{and the introduction of corner interfaces leads to further increase in error.} The visualizations of the median-error test samples in Figure \ref{fig:results_vis} give some intuition; error is an order of magnitude higher along discontinuous boundaries; this is most apparent in the gradient. The true solution gradient often takes its most extreme values along the discontinuities, and the RWE gives an indication of how well the model captures the most extreme values in the solution. Unsurprisingly, this error is much higher than the RHE, but we note that it is confined to a small area of the domain along discontinuous boundaries and corner interfaces. 

\mt{In the discretization-robustness experiment described in Figure \ref{fig:disc-err}, we observe that the Voronoi model exhibits greater robustness to changes in discretization. We hypothesize that, in the  \mt{direction of decreasing resolution}, the smaller error increase for the Voronoi model, in comparison with the smooth model, could be due to the piecewise-constant nature of the Voronoi microstructure on faces; improved resolution here does not help. On the other hand, for larger grid sizes, increased resolution on corners and discontinuities can help, which could explain the decrease in error from grid edge size of $128$ to $256$ for the Voronoi model while the smooth model increases in error. One could fine-tune the trained models with small amounts of data from different resolutions, but we leave this transfer learning exploration to future work.}

We also examine the effect of the number of training data samples and the FNO size on model accuracy for the smooth and Voronoi microstructures. \mt{For data size dependence, we observe in Figure \ref{fig:data-size-compare} that for these two microstructures, the test error scales $\approx N^{-0.65}$ and $\approx N^{-0.25}$, respectively, where $N$ is the number of training data. In theory, we do not expect to beat the Monte Carlo error decay of $\frac{1}{\sqrt{N}}$ \cite{murphy2012machine}.} We note that this is comparable to the behavior during training over $400$ epochs; the test error for the smooth microstructure continues to decrease over the entire training periodic, but the test error for the Voronoi microstructure plateaus by around $100$ epochs. The model size also presents a qualitatively different effect on error for the smooth and Voronoi microstructures. In Figure \ref{fig:err-vs-size}, we see the tradeoff between the number of Fourier modes and the model width for approximately constant model size, measured as the product of the width and number of modes. The Voronoi example benefits from additional Fourier modes, whereas the smooth example worsens. On the other hand, the smooth model benefits more from an increase in model width. We refer to \cite{de2022cost,lanthaler2023nonlocal} for in-depth numerical studies of errors, choice of hyperparameters,
and parameter distributions for FNO; here we highlight only the qualitative differences between the model behavior for different microstructures.

\mt{We also note that a significant portion of the model error may be attributed to grid ambiguity; with a $128 \times 128$ grid, the FNO does not know where between gridpoints a discontinuity may fall. This may be quantified empirically in the case of the square microstructure. We perform an experiment in which we create data of square microstructure inclusions whose boundary falls exactly on the gridpoints. One dataset treats the boundary as open, and the other treats the boundary as closed; the input grid points that fall on the boundary differ between the two datasets. We quantify grid ambiguity error by the difference in the outputs of a model given both the open square data and the closed square data. We find that the absolute $H^1$ norm of the difference between these two outputs is $0.041$, which is slightly under twice the absolute $H^1$ norm of the output compared to the true solution, which has a value of $0.025$. We hypothesize that the model learns to assume the boundary falls near the middle of the grid square, which explains why the output difference between the two datasets is roughly twice the true error. From a theory standpoint, one could bound the Lipschitz constant of the FNO and compare it to the Lipschitz constant of the true map described by Proposition \ref{prop:stab_Lq}. However, we leave the theoretical estimates of error rates to future work.}

Finally, we compare the error in the effective $\overline{A}$ defined in \eqref{eq:cell}. This error is scaled by a difference between the Frobenius norms of the arithmetic and harmonic means of the true $A$ because the Frobenius norm of the true $\overline{A}$ should fall within that range. For this reason, in the case where the arithmetic and harmonic means are very close, as is frequently the case for the square and star inclusions, it is not valuable to learn the true $\overline{A}$. On the other hand, the varying-geometry Voronoi microstructure example on average has about 100 times greater difference between the arithmetic and harmonic means, in comparison with the star and square microstructure examples. This characteristic of the Voronoi microstructure further underscores the value of learning in this setting. 
\section{Conclusions}

In this work, we establish approximation theory for learning the
solution operator arising from the elliptic homogenization cell problem \eqref{eqn:cellprob}, viewed as a mapping from the coefficient to the solution; the theory allows for discontinuous
coefficients. We also perform numerical experiments that validate the theory, explore qualitative differences between various microstructures, and quantify error/cost trade-offs in
the approximation. We provide two  different stability results for the underlying solutions that build understanding of the underlying map. These stability results, when combined with existing universal approximation results for neural operators, result in rigorous approximation theory for learning in this problem setting. On the empirical side we provide, and then study numerically, examples of various microstructures that satisfy the conditions of the approximation theory. We observe that model error is dominated by error along discontinuous and corner interfaces, and that discontinuous microstructures give rise to qualitatively different learning behavior. Finally, we remark that the learned effective properties are highly accurate, especially in the case of the Voronoi microstructure that we regard as the most complex. Since discontinuous microstructures arise naturally in solid mechanics, understanding learning behavior in this context is an important prerequisite for using machine learning for applications. In this area and others, numerous questions remain which address the rigor necessary for use of machine learning in scientific applications.

We have confined our studies to one of the
canonical model problems of homogenization theory, the divergence form elliptic setting with periodic microstructure, to obtain deeper understanding of the learning constitutive laws. One interesting
potential extension of this work is the setting in which the material coefficient $A$ is not periodic
but random with respect to the microstructure. Another is where it is only locally periodic and has dependence on the macroscale variable as well; thus $A^{\epsilon} = A(x, \frac{x}{\eps})$. In this case, the form of the cell problem \eqref{eqn:cellprob} and homogenized coefficient \eqref{eq:cell} remain the same, but $A$ and $\chi$ both have parametric dependence on $x$. The approximation theory and the empirical learning problem would grow in complexity in comparison to what is developed here, but the resulting methodology could be useful
and foundational for understanding more complex constitutive models in which the force balance
equation couples to other variables. Indeed, the need for efficient learning of constitutive models is
particularly pressing in complex settings such as crystal plasticity. We anticipate
that the potential use of machine learning to determine parametric
dependence of constitutive models defined by homogenization will be for these more complex problems. The work described in this paper provides an underpinning conceptual approach, 
foundational analysis and set of numerical experiments that serve to underpin more applied work in this field.

\vspace{0.1in}

% \paragraph{Acknowledgements}
% The work of KB, AR, and AMS was sponsored, in part, by the Army
% Research Laboratory under Cooperative Agreement W911NF22-2-0120, and KB is also sponsored
% under W911NF-22-1-0269. AMS is supported by the DoD as a Vannevar Bush Faculty Fellow.
% The work of MT was funded by the Department of Energy Computational Science Graduate Fellowship 
% under award DE-SC0021110. NBK is grateful to the NVIDIA Corporation for support through full-time employment.

\vspace{0.1in}

\bibliographystyle{siam} 
\bibliography{ref}

\appendix
\addcontentsline{toc}{section}{Appendices}
\section*{Appendices}
\section{Proofs of Stability Estimates} \label{apdx:perturb}
In this section, we prove the stability estimates stated in Section \ref{ssec:SE}.
The following lemma is a modification of the standard estimate for parametric dependence
of elliptic equations on their coefficient. We include it here for completeness.

\Propstabinfty*

\begin{proof}
For existence and uniqueness of the solution to the cell problem using Lax-Milgram, we refer to the texts  \cite{blanc2023homogenization,pavliotis2008multiscale}; we simply derive the bounds and stability estimate. First, note that \eqref{eqn:cellprob} decouples, in particular,
\begin{equation}
\label{eq:decoupled_cell}
- \nabla \cdot (A \nabla \chi_{\ell})  = \nabla \cdot A e_{\ell}, \qquad y \in \Td
\end{equation}
for \(l=1,\dots,d\) where \(e_{\ell}\) is the \(\ell\)-th standard basis vector of \(\Rd\) and each \(\chi_{\ell} \in \dot{H}^1 (\mathbb{T}^d; \R)\). Multiplying by \(\chi_{\ell}\) and integrating by parts shows
\begin{align*}
    \alpha \|\nabla \chi_{\ell} \|_{L^2}^2 &\leq \int_{\Td} \langle A \nabla \chi_{\ell}, \nabla \chi_{\ell} \rangle \dy \\
    &= - \int_{\Td} \langle A e_{\ell}, \nabla \chi_{\ell} \rangle \dy \\
    &\leq \int_{\Td} |Ae_{\ell}| |\nabla \chi_{\ell}| \dy \\
    &\leq \left (\int_{\Td} |Ae_{\ell}|^2 \dy \right )^{\frac{1}{2}} \left (\int_{\Td} |\nabla \chi_{\ell} |^2 \dy \right )^{\frac{1}{2}} \\
    &\leq \|A\|_{L^\infty} \|\nabla \chi_{\ell} \|_{L^2}.
\end{align*}
Therefore
\[\|\nabla \chi \|_{L^2}^2 = \sum_{l=1}^d \|\nabla \chi_{\ell} \|_{L^2}^2 \leq \frac{d \|A\|_{L^\infty}^2}{\alpha^2}\leq \frac{d \beta^2}{\alpha^2},\]
which implies the first result.

To prove the second result, we denote the right hand side of \ref{eq:decoupled_cell} by $f^{(i)}_{\ell} = \nabla \cdot A^{(i)} e_{\ell}$ in what follows. For any $v \in \dot{H}^1(\Td;\R)$, we have that 
\begin{align*}
-\int_{\Td} \nabla \cdot (\Aone \nabla \chione_{\ell}) v \; \dy  & = \int_{\Td} \fone_{\ell} v\; \dy \\
-\int_{\partial \Td} v \Aone \nabla \chione_{\ell}\cdot \widehat{n} \; \dy + \int_{\Td }\nabla v \cdot  \Aone \nabla \chione_{\ell}\; \dy & = \int_{\Td} \fone_{\ell} v \; \dy.
\end{align*}

Since $v$, $\Aone$, and the solution $\chione_{\ell}$ are all periodic on $\Td$, the first term is $0$. Combining with the equation for $\chitwo_{\ell}$, we get
\begin{align*}
\int_{\Td}\nabla v \cdot \left(\Aone - \Atwo\right)&\nabla \chione_{\ell}  \; \dy  =\\&=\int_{\Td}(\fone_{\ell} - \ftwo_{\ell})v + \nabla v \cdot \left(\Atwo\left(\nabla \chitwo_{\ell} - \nabla \chione_{\ell}\right)\right) \dy.
\end{align*}
Setting $v = \chitwo_{\ell} - \chione_{\ell}$, we have
\begin{align*}
\int_{\Td}\left(\nabla \chitwo_{\ell} - \nabla \chione_{\ell}\right) &\cdot\left(\left(\Aone - \Atwo\right)\nabla\chione_{\ell}\right) \dy  = \int_{\Td} (\fone_{\ell} - \ftwo_{\ell})\left(\chitwo_{\ell} - \chione_{\ell}\right) \dy\\  &+ \int_{\Td} \left(\nabla \chitwo_{\ell} - \nabla \chione_{\ell}\right) \cdot \left(\Atwo \left(\nabla \chitwo_{\ell} - \nabla \chione_{\ell}\right)\right)  \dy, \\
\alpha \|\nabla \chitwo_{\ell} - \nabla\chione_{\ell}\|_{L^2}^2  &\leq \|\Aone - \Atwo\|_{L^{\infty}} \|\nabla \chione_{\ell}\|_{L^2}\|\nabla \chitwo_{\ell} - \nabla \chione_{\ell}\|_{L^2} \\&+ \|\fone_{\ell} - \ftwo_{\ell}\|_{\dot{H}^{-1}}\|\nabla \chitwo_{\ell} - \nabla \chione_{\ell}\|_{L^2},
\end{align*}
\begin{equation}\label{eqn:per_bnd_inf_f}
 \|\chitwo_{\ell} - \chione_{\ell}\|_{\dot{H}^1}  \leq \frac{1}{\alpha}\left(\|\Aone - \Atwo\|_{L^\infty}\|\nabla \chione_{\ell}\|_{L^2} + \|\fone_{\ell} - \ftwo_{\ell}\|_{\dot{H}^{-1}}\right).
\end{equation}
Evaluating,
\begin{align}
    \|\fone_{\ell} - \ftwo_{\ell}\|_{\dot{H}^{-1}} & = \|\nabla \cdot \Aone e_{\ell} - \nabla \cdot \Atwo  e_{\ell}\|_{\dot{H}^{-1}},\\
    & = \sup_{\|\xi\|_{\dot{H}^1}=1} \int_{\Td} \xi \nabla \cdot(\Aone - \Atwo)e_{\ell} \; \dy, \\
    & \leq \sup_{\|\xi\|_{\dot{H}^1}=1} \|(\Aone - \Atwo)e_{\ell}\|_{L^2} \|\nabla \xi \|_{L^2},\\
    & \leq \|\Aone - \Atwo\|_{L^2}\leq \|\Aone - \Atwo\|_{L^{\infty}}.
\end{align}
since our domain is $\Td$. Combining this with \eqref{eqn:per_bnd_inf_f} and the bound of $\|\nabla \chi_{\ell}\|_{L^2}\leq \frac{\beta}{\alpha}$ obtained in the first part of this proposition, we have 
\begin{equation}
    \|\chitwo_{\ell} - \chione_{\ell}\|_{\dot{H}^1}  \leq \frac{1}{\alpha}\left(1 + \frac{\beta}{\alpha}\right)\|\Aone- \Atwo\|_{L^\infty}.
\end{equation}
Returning to $d$ vector components yields the result. 
\end{proof}

The following result shows that the mapping \(A \mapsto \bar{A}\) is continuous on separable subspaces of $L^{\infty}(\Td;\Rdd)$.
%Before proving continuity from $L^2$ to $\dot{H}^1$, we need a few lemmas. The first lemma proves that a continuous map exists from a $A$ belonging to a separable subspace of $L^{\infty}$. In general, compact sets of $\PD$ are not separable. \mt{@Nik: From my understanding, the following two results show in sequence that (i) a continuous map exists from a separable subspace and that (ii) useful separable subspaces don't exist. We should discuss how much of this to include in the main body. Right now it is a single sentence referring to these lemmas.}

\begin{lemma}
\label{lemma:extended_map_Linfty}
Let \(\A \subset L^\infty(\Td;\Rdd)\) be a separable subspace and \(K \subset \A \cap \PD\) a closed set in $L^{\infty}$. Define the mapping \(F : K \to \Rdd\) by \(A \mapsto \bar{A}\) as given by \eqref{eq:cell}. Then there exists a continuous mapping \(\mathcal{F} \in C(\A;\Rdd)\) such that \(\mathcal{F}(A) = F(A)\) for any \(A \in K\).
\end{lemma}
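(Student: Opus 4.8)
The plan is to prove the lemma in two stages: first show that the map $F\colon K\to\Rdd$ is in fact Lipschitz continuous when $K$ is given the metric induced by $\|\cdot\|_{L^\infty}$, and then extend this continuous map from the closed subset $K$ to all of the normed space $\A$ by a classical extension theorem. Separability of $\A$ will play no role in the argument itself; it is carried along only because it is what later makes $\A$ a legitimate input space for the neural-operator approximation theory.

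\textbf{Stage 1 (continuity of $F$ on $K$).} By Proposition \ref{prop:stab_infty}, every $A\in K\subset\PD$ yields a unique cell solution $\chi\in\dot{H}^1(\Td;\Rd)$ with $\|\chi\|_{\dot{H}^1}\le\sqrt{d}\,\beta/\alpha$, and for $\Aone,\Atwo\in K$ the solutions obey $\|\chitwo-\chione\|_{\dot{H}^1}\le\frac{\sqrt{d}}{\alpha}\big(1+\frac{\beta}{\alpha}\big)\|\Aone-\Atwo\|_{L^\infty}$. Writing the quadrature \eqref{eq:cell} as
\[
F(A)=\int_{\Td}A\dy+\int_{\Td}A\,(\nabla\chi)^T\dy,
\]
the first term is $1$-Lipschitz in $A$ by the triangle inequality for the Bochner integral (recall $|\Td|=1$). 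For the second term, form $F(\Aone)-F(\Atwo)$ and insert $\pm\,\Atwo(\nabla\chione)^T$ to split it as $\int_{\Td}(\Aone-\Atwo)(\nabla\chione)^T\dy+\int_{\Td}\Atwo(\nabla\chione-\nabla\chitwo)^T\dy$. Bounding each piece by Cauchy--Schwarz, pairing the $L^\infty$ matrix field against the $L^2$ gradient, and using the two bounds above together with the uniform estimate $\|A\|_{L^\infty}\le\sqrt{d}\,\beta$ valid on $\PD$, gives
\[
|F(\Aone)-F(\Atwo)|_F\le C(d,\alpha,\beta)\,\|\Aone-\Atwo\|_{L^\infty}.
\]
Hence $F$ is Lipschitz, and in particular continuous, on $K$ for the $L^\infty$ metric.

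\textbf{Stage 2 (extension).} The subspace $\A$ with $\|\cdot\|_{L^\infty}$ is a normed space, hence metrizable and normal, and $K\subset\A$ is closed by hypothesis. Applying the Tietze extension theorem to each of the $d^2$ scalar components of $F$ produces a continuous $\mathcal{F}\in C(\A;\Rdd)$ with $\mathcal{F}(A)=F(A)$ for all $A\in K$. If one wishes to retain Lipschitz continuity one may instead invoke the Kirszbraun extension theorem, since $\Rdd$ with the Frobenius inner product is a Hilbert space; and if a bounded extension is preferred one may afterwards compose with the radial retraction of $\Rdd$ onto the closed ball of radius $\sup_{A\in K}|F(A)|_F$, which leaves the values on $K$ unchanged.

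\textbf{Expected main obstacle.} There is no deep obstruction here. The only point requiring care is Stage 1: using the correct $L^\infty$--$L^2$ duality in the product term $A(\nabla\chi)^T$ and checking that every constant is uniform over $K$ — which is automatic because membership in $\PD$ simultaneously forces $\|A\|_{L^\infty}\le\sqrt{d}\,\beta$ and, through Proposition \ref{prop:stab_infty}, $\|\chi\|_{\dot{H}^1}\le\sqrt{d}\,\beta/\alpha$. The extension in Stage 2 is purely a black-box application of a standard theorem; note that the resulting $\mathcal{F}$ is highly non-unique and carries no PDE meaning away from $K$, which is harmless since only $\mathcal{F}|_K=F$ is used downstream.
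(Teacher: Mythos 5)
Your proposal is correct and follows essentially the same route as the paper: split $F(\Aone)-F(\Atwo)$ by inserting $\pm\Atwo(\nabla\chione)^T$, bound the pieces via the $L^\infty$--$L^2$ pairing together with the a priori and perturbation estimates of Proposition \ref{prop:stab_infty}, and then extend the resulting continuous (indeed Lipschitz, uniformly over $\PD$) map from the closed set $K$ to $\A$ by the Tietze extension theorem. The only difference is cosmetic: the paper keeps the $L^\infty$ norms of $\Aone,\Atwo$ explicit in the constant rather than using the uniform bound $\|A\|_{L^\infty}\le\sqrt{d}\,\beta$ on $\PD$, and it does not need the Kirszbraun/retraction refinements you mention.
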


\begin{proof}
Let \(A^{(1)}, A^{(2)} \in K\) then, by Proposition~\ref{prop:stab_infty},
\begin{align*}
    \big | F \big( A^{(1)} \big ) &- F \big ( A^{(2)} \big ) \big |_F \leq \int_{\Td} |A^{(1)} - A^{(2)}|_F \big( 1 +  |\nabla \chi^{(1)} |_F \big ) \dy \\ &+ \int_{\Td} |A^{(2)}|_F |\nabla \chi^{(1)} - \nabla \chi^{(2)} |_F \dy \\
    &\leq \|A^{(1)} - A^{(2)}\|_{L^\infty} \big( 1 + \|\nabla \chi^{(1)}\|_{L^2} \big ) + \|A^{(2)}\|_{L^\infty} \|\nabla \chi^{(1)} - \nabla \chi^{(2)}\|_{L^2} \\
    &\leq \left ( 1 + \frac{\sqrt{d}}{\alpha} \left ( \|A^{(1)}\|_{L^\infty} + \|A^{(2)}\|_{L^\infty}  \left ( \frac{\min \big ( \|A^{(1)}\|_{L^\infty}, \|A^{(2)}\|_{L^\infty} \big )}{\alpha} + 1 \right ) \right )  \right ) \\
    &\;\;\;\; \cdot \|A^{(1)} - A^{(2)}\|_{L^\infty}
\end{align*}
hence \(F \in C(K;\Rdd)\). Applying the Tietze extension theorem \cite{dugundji1951extension} to \(F\) implies the existence of \(\mathcal{F}\).
\end{proof}

The following lemma shows that, unfortunately, separable subspaces of 

\noindent \(L^{\infty}(\Td;\Rdd)\) are not very useful. Indeed, in the desired area of application of continuum mechanics, we ought to be able to place a boundary of material discontinuity anywhere in the domain. The following result shows that doing so is impossible for a subset of $\PD$ which lies only in a separable subspace of $L^{\infty}(\Td; \Rdd)$.

\begin{lemma}
\label{lemma:no_separable_characteristics}
For any \(t \in [0,1]\) define \(c_t : [0,1] \to \R\) by 
\[c_t(x) = \begin{cases}
1, & x \leq t \\
0, & x > t
\end{cases}, \qquad \forall \: x \in [0,1].\]
Define \(E = \{c_t : t \in [0,1]\} \subset L^\infty([0,1])\). There exists no separable subspace \(\A \subset L^\infty([0,1])\) such that \(E \subseteq \A\).
\end{lemma}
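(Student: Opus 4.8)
The plan is to show that $E=\{c_t:t\in[0,1]\}$ is an uncountable subset of $L^\infty([0,1])$ that is \emph{uniformly discrete}: any two distinct elements are at distance $1$ apart in the $L^\infty$ norm. From this, separability of any subspace containing $E$ is immediately contradicted, since an uncountable $1$-separated set cannot be covered by countably many balls of radius $1/2$, and a separable metric space (hence any separable subspace of $L^\infty([0,1])$ with the induced metric) has a countable dense set whose $1/2$-balls must cover everything.

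\begin{proof}
Fix $s,t\in[0,1]$ with $s\neq t$; without loss of generality $s<t$. For any $x\in(s,t]$ we have $c_s(x)=0$ while $c_t(x)=1$, so $|c_t(x)-c_s(x)|=1$ on the set $(s,t]$, which has positive Lebesgue measure. Since $|c_t-c_s|\le 1$ everywhere, it follows that $\|c_t-c_s\|_{L^\infty([0,1])}=1$. Thus $E$ is an uncountable set (indexed injectively by $t\in[0,1]$, as the above shows $c_s\neq c_t$ in $L^\infty$ whenever $s\neq t$) with the property that the open balls $\{B(c_t,1/2):t\in[0,1]\}$ are pairwise disjoint.

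Now suppose, for contradiction, that there is a separable subspace $\A\subset L^\infty([0,1])$ with $E\subseteq\A$. Being a subset of a separable metric space, $\A$ is itself separable, so it contains a countable dense subset $D=\{g_n\}_{n\in\N}$. For each $t\in[0,1]$, density of $D$ in $\A$ (and $c_t\in E\subseteq\A$) gives some $g_{n(t)}\in D$ with $\|c_t-g_{n(t)}\|_{L^\infty}<1/2$, i.e.\ $g_{n(t)}\in B(c_t,1/2)$. This defines a map $t\mapsto n(t)$ from $[0,1]$ to $\N$. If $n(s)=n(t)$ for some $s\neq t$, then $g_{n(s)}=g_{n(t)}$ lies in both $B(c_s,1/2)$ and $B(c_t,1/2)$, contradicting the disjointness established above; hence $t\mapsto n(t)$ is injective. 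This exhibits an injection from the uncountable set $[0,1]$ into the countable set $\N$, which is impossible. Therefore no such separable $\A$ exists.
\end{proof}

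\begin{remark}
The same argument shows that $E$ cannot be contained in any separable subset of $L^\infty([0,1])$, not merely any separable \emph{subspace}; separability was used only as a metric-space property. Consequently, any family of coefficient fields in $\PD$ whose discontinuity interface can be translated continuously through the domain fails to lie in a separable subspace of $L^\infty(\Td;\Rdd)$, which is precisely why the $L^2$- and $L^q$-based stability results, Propositions~\ref{prop:cellproblem_cont_L2} and~\ref{prop:stab_Lq}, are needed for the approximation theory.
\end{remark}

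The main obstacle here is essentially conceptual rather than technical: one must recognize that the relevant obstruction to separability is the existence of an uncountable uniformly discrete subset, and then verify the clean geometric fact that characteristic functions of distinct initial segments are exactly at $L^\infty$-distance $1$. Once this is isolated, the pigeonhole argument against a countable dense set is routine. No delicate estimates or measure-theoretic subtleties arise beyond observing that $(s,t]$ has positive measure.
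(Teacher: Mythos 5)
Your proof is correct and follows essentially the same route as the paper: both arguments rest on the observation that distinct elements of $E$ are at exact $L^\infty$-distance $1$, making $E$ an uncountable uniformly discrete set, which is incompatible with lying inside a separable metric space. The paper phrases this as "no countable subset of $E$ can be dense in $E$" (using that subsets of separable metric spaces are separable), while you run the equivalent pigeonhole argument with disjoint radius-$1/2$ balls against a countable dense subset of $\A$; the difference is purely presentational.
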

\begin{proof}
Suppose otherwise. Since \((\A, \|\cdot\|_{L^\infty})\) is a separable metric space, \((E,\|\cdot\|_{L^\infty})\)  must be separable since \(E \subseteq \A\); this is a contradiction since \((E,\|\cdot\|_{L^\infty})\) is
not separable. To see this, let $\{c_{t_j}\}_{j=1}^\infty$ be an arbitrary countable susbset of \(E\). Then for any \(t \not \in \{t_j\}_{j=1}^\infty\), we have,
\[\inf_{\{t_j\}_{j=1}^\infty} \|c_t - c_{t_j}\|_{L^\infty} = 1.\]
Hence no countable subset can be dense.
\end{proof}

Instead of working on a compact subset of a separable subspace of $L^{\infty}(\Td; \Rdd)$, we may instead try to find a suitable probability measure which contains the discontinous functions of interest. The following remarks makes clear why such an approch would still be problematic for the purposes of approximation.
\begin{remark}[Gaussian Threshholding]
Let \(\mu\) be a Gaussian measure on 

\noindent\(L^2([0,1])\). Define
\[T(x) = \begin{cases}
1, & x \geq 0 \\
0, & x < 0
\end{cases}, \qquad \forall \: x \in [0,1]\]
and consider the corresponding Nemytskii operator \(N_T : L^2 ([0,1]) \to L^\infty ([0,1])\).
Then, working with the definitions in Lemma~\ref{lemma:no_separable_characteristics}, it is easy to see that 

\noindent\(E \subset \supp {N_T}^\sharp \mu\). Therefore there exists no separable subspace of \(L^\infty([0,1])\) which contains \(\supp {N_T}^\sharp \mu\).
\end{remark}

We therefore abandon $L^\infty$ and instead show continuity and Lipschitz continuity for some $L^q$ with $q < \infty$ to $\dot{H}^1$. The following lemma is a general result for convergence of sequences in metric spaces which is used in a more specific context in the next lemma. 
\begin{lemma}
\label{lemma:subsubsequences}
Let \((M,d)\) be a metric space and \((a_n) \subset M\) a sequence. If every subsequence \((a_{n_k}) \subset (a_n)\) contains a subsequence \((a_{n_{k_l}}) \subset (a_{n_k})\) such that \((a_{n_{k_l}}) \to a \in M\) then \((a_n) \to a\).
\end{lemma}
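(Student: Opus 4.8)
The plan is to prove the contrapositive-style reformulation directly: assuming the subsubsequence hypothesis, I will show $(a_n) \to a$ by contradiction. Suppose $(a_n) \not\to a$. Then, by negating the definition of convergence, there exists $\varepsilon > 0$ and a subsequence $(a_{n_k})$ such that $d(a_{n_k}, a) \geq \varepsilon$ for all $k$. This subsequence is the object to which we apply the hypothesis.

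Next I would apply the hypothesis to this particular subsequence $(a_{n_k})$: it must contain a further subsequence $(a_{n_{k_l}})$ with $(a_{n_{k_l}}) \to a$. But $(a_{n_{k_l}})$ is a subsequence of $(a_{n_k})$, so it inherits the bound $d(a_{n_{k_l}}, a) \geq \varepsilon$ for all $l$. This is incompatible with $(a_{n_{k_l}}) \to a$, which would force $d(a_{n_{k_l}}, a) < \varepsilon$ for all sufficiently large $l$. This contradiction establishes $(a_n) \to a$.

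There is no real obstacle here; this is a standard fact about metric spaces, and the only point requiring any care is the bookkeeping that a subsequence of a subsequence is itself a subsequence of the original and retains any pointwise inequality satisfied by every term. I would state this explicitly rather than leave it implicit. The argument uses only the triangle-inequality-free portion of metric space structure, namely the notion of convergence via $d$, so no hypotheses beyond those stated are needed.
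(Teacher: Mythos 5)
Your argument is correct and coincides with the paper's own proof: both proceed by contradiction, negate convergence to extract $\varepsilon > 0$ and a subsequence $(a_{n_k})$ with $d(a_{n_k},a) \geq \varepsilon$ for all $k$, and then observe that no further subsequence of it can converge to $a$, contradicting the hypothesis. Your explicit remark that a subsubsequence inherits the pointwise lower bound is the same bookkeeping the paper leaves implicit; nothing further is needed.
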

\begin{proof}
Suppose otherwise. Then, there exists some $\epsilon >0$ such that, for every $N \in \mathbb{Z}^+$, there exists some $n = n(N) > N$ such that 
$$d(a_n, a) \geq \epsilon.$$ 
Then we can construct a subsequence $(a_{n_j}) \subset (a_n)$ such that $d(a_{n_j},a) \geq \epsilon \;\forall n_j$. Therefore $a_{n_j}$ does not have a subsequence converging to $a$, which is a contradiction.
\end{proof}

The following lemma proves existence of a limit in $L^2(D;\Rd)$ of a sequence of outputs of operators in $L^{\infty}(D;\Rdd)$.
\begin{lemma}
\label{lemma:product_converges}
Let \(D \subseteq \Rd\) be an open set and \((A_n) \subset L^\infty(D;\Rdd)\) a sequence satisfying the following.
\begin{enumerate}
    \item $A_n \in \PD$ for all $n$, 
    \item There exists \(A \in L^\infty (D;\Rdd)\) such that
    \((A_n) \to A \text{ in } L^2(D;\Rdd).\)
\end{enumerate}
Then, for any \(g \in L^2(D;\Rd)\), we have that \((A_ng) \to Ag \) in \(L^2(D;\Rd)\). 
\end{lemma}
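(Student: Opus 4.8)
The plan is to show that $A_n g \to Ag$ in $L^2(D;\Rd)$ by combining the $L^2$ convergence of $(A_n)$ with a uniform $L^\infty$ bound supplied by the constraint $A_n \in \PD$. The only obstacle is that $L^2$ convergence of $(A_n)$ does not by itself control the product pointwise against a merely $L^2$ function $g$; we need a dominated-convergence-type argument, and the subsubsequence principle of Lemma~\ref{lemma:subsubsequences} is the natural device to convert almost-everywhere information into $L^2$ convergence of the whole sequence.

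First I would record the elementary estimate
\[
\int_D |A_n g - A g|_F^2 \dx \le \int_D |A_n - A|_F^2\,|g|^2 \dx,
\]
using the bound $|Bv| \le |B|_F\,|v|$ for a matrix $B$ and vector $v$. The right-hand side has integrand dominated by $\bigl(|A_n|_F + |A|_F\bigr)^2 |g|^2$; since $A_n \in \PD$ we have $|A_n|_F \le \sqrt{d}\,\beta$ uniformly in $n$, and $|A|_F \le \sqrt{d}\,\beta$ as well (the bound passes to the $L^2$ limit $A$, at least almost everywhere), so the integrand is dominated by the fixed integrable function $4d\beta^2 |g|^2 \in L^1(D)$.

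Next I would invoke Lemma~\ref{lemma:subsubsequences} with $M = L^2(D;\Rd)$ and $a_n = A_n g$. Given any subsequence $(A_{n_k})$, since $A_{n_k} \to A$ in $L^2(D;\Rdd)$ there is a further subsequence $(A_{n_{k_l}})$ converging to $A$ pointwise almost everywhere on $D$. Along that subsequence the integrand $|A_{n_{k_l}} g - A g|_F^2$ tends to $0$ a.e.\ and is dominated by $4d\beta^2 |g|^2 \in L^1(D)$, so by the dominated convergence theorem $A_{n_{k_l}} g \to A g$ in $L^2(D;\Rd)$. Hence every subsequence of $(A_n g)$ has a further subsequence converging to $Ag$ in $L^2$, and Lemma~\ref{lemma:subsubsequences} gives $A_n g \to A g$ in $L^2(D;\Rd)$, as claimed. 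The main point to be careful about is that one cannot extract a single a.e.-convergent subsequence at the outset and conclude directly — the whole-sequence statement really does require the subsubsequence argument, which is precisely why Lemma~\ref{lemma:subsubsequences} was stated.
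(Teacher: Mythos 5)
Your proof is correct and follows essentially the same route as the paper's: extract an a.e.-convergent further subsequence from any subsequence, apply dominated convergence with a dominating function proportional to $|g|^2$ (using the uniform bound on $|A_n|_F$ coming from $A_n \in \PD$), and conclude for the whole sequence via Lemma~\ref{lemma:subsubsequences}. The only cosmetic difference is that you work directly with the Frobenius-norm estimate $|A_n(y)|_F \le \sqrt{d}\,\beta$, whereas the paper argues componentwise with unspecified norm-equivalence constants; the substance is identical.
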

\begin{proof}
We have
\[\|A_ng\|_{L^2} \leq \beta \|g\|_{L^2}\]
hence \((A_n g) \subset L^2(D;\Rd)\) and, similarly, by finite-dimensional norm equivalence, there is a constant \(C_1 > 0\)
such that
\[\|Ag\|_{L^2} \leq C_1 \|A\|_{L^\infty} \|g\|_{L^2} \]
hence \(Ag \in L^2(D;\Rd)\). Again, by finite-dimensional norm equivalence, we have that there exists a constant \(C_2 > 0\)  
such that, for \(j \in \{1,\dots,d\}\) and almost every \(y \in D\), we have
\[(A_n g)_j(y)^2 \leq |A_n^{(j)}(y)|^2 |g(y)|^2 \leq C_2 \beta^2 |g(y)|^2\]
where \(A_n^{(j)}(y)\) denotes the \(j\)-th row of \(A_n^{(j)}(y)\). In particular,
\[|(A_n g)_j (y) | \leq \sqrt{C_2} \beta |g(y)|.\]
Let \((A_{n_k}) \subset (A_n)\) be an arbitrary subsequence. Since \((A_n) \to A\), we  have that \((A_{n_k}) \to A \) in \(L^2(D;\Rdd)\). Therefore, there exists a subsequence \((A_{n_{k_l}}) \subset (A_{n_k})\) such that \(A_{n_{k_l}}(y) \to A(y)\) for almost every \(y \in D\). Then \(A_{n_{k_l}}(y) g(y) \to A(y) g(y)\) for almost every \(y \in D\). Since \(|g| \in L^2(\Rd)\), we have, by the dominated convergence theorem, that \((A_{n_{k_l}} g)_j \to (Ag)_j\) in \(L^2(D)\) for every \(j \in \{1,\dots,d\}\). Therefore \((A_{n_{k_l}} g) \to Ag\) in \(L^2(D;\Rd)\). Since the subsequence \((A_{n_k})\) was arbitrary, Lemma~\ref{lemma:subsubsequences} implies the result.
\end{proof}

Finally, we may prove Proposition \ref{prop:cellproblem_cont_L2}. 

\PropstabLtwo*
\begin{proof}
Consider the PDE
\begin{equation}
\label{eq:decoupled_cell2}
- \nabla \cdot (A \nabla u) = \nabla \cdot A e, \qquad y \in \Td
\end{equation}
where \(e\) is some standard basis vector of \(\Rd\). Let \((A_n) \subset K\) be a sequence such that \((A_n) \to A \in K\) in  \(L^2(\Td;\Rdd)\). Denote by \(u_n \in \dot{H}^1 (\Td)\) the solution to \eqref{eq:decoupled_cell2} corresponding to each \(A_n\) and by \(u \in \dot{H}^1 (\Td)\) the solution corresponding to the limiting \(A\). A similar calculation as in the proof of Proposition \ref{prop:stab_infty} shows 
\begin{align*}
    \alpha \|u_n - u\|_{\dot{H}^1}^2 &\leq \int_{\Td} \langle (A - A_n)(\nabla u + e), \nabla u_n - \nabla u \rangle \dy \\
    &\leq \|u_n - u\|_{\dot{H}^1} \|(A_n - A)(\nabla u + e)\|_{L^2}.
\end{align*}
Since \(\nabla u + e \in L^2(\Td;\Rd)\), by Lemma~\ref{lemma:product_converges}, \(\big ( A_n(\nabla u + e) \big) \to A(\nabla u + e)\) in \(L^2(\Td;\Rd)\) hence \((u_n) \to u\) in \(\dot{H}^1(\Td)\). In particular, the mapping \(A \mapsto u\) defined by \eqref{eq:decoupled_cell2} is continuous. Since the problem \eqref{eqn:cellprob} decouples as shown by \eqref{eq:decoupled_cell}, we have that each component mapping \(G_l : K \to \dot{H}^1(\Td)\) defined by \(A \mapsto \chi_{\ell}\) is continuous thus \(G\) is continuous.
Applying the Tietze 
extension theorem \cite{dugundji1951extension} to \(G\) implies the existence of \(\G\).
\end{proof}

The following is a straightforward consequence of Proposition \ref{prop:cellproblem_cont_L2} that establishes continuity of the map $A \mapsto \overline{A}$ defined in \eqref{eq:cell} as well. 

\begin{lemma}
\label{lemma:extended_map_L2}
Endow \(\PD\) with the \(L^2(\Td;\Rdd)\) induced topology and let \(K \subset \PD\) be a closed set. Define the mapping \(F : K \to \Rdd\) by \(A \mapsto \bar{A}\) as given by \eqref{eq:cell}. Then there exists a bounded continuous mapping \(\mathcal{F} \in C\bigl(L^2(\Td;\Rdd);\Rdd\bigr)\) such that \(\mathcal{F}(A) = F(A)\) for any \(A \in K\).
\end{lemma}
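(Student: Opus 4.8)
The plan is to deduce Lemma~\ref{lemma:extended_map_L2} from Proposition~\ref{prop:cellproblem_cont_L2} together with an elementary continuity argument for the quadrature map $(\chi,A)\mapsto \overline{A}$. First I would recall that by Proposition~\ref{prop:cellproblem_cont_L2} there is a bounded continuous $\mathcal{G}\in C\bigl(L^2(\Td;\Rdd);\dot{H}^1(\Td;\Rd)\bigr)$ agreeing with $G$ on $K$, so it suffices to show that the composite map $A\mapsto \int_{\Td}\bigl(A(y)+A(y)\nabla\mathcal{G}(A)(y)^T\bigr)\dy$ is a bounded continuous map $L^2(\Td;\Rdd)\to\Rdd$, and then note that on $K$ this composite coincides with $F$ (since $\mathcal{G}=G$ there, so $\nabla\mathcal{G}(A)=\nabla\chi$). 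Actually, since the eventual statement only asks for $\mathcal{F}$ defined on all of $L^2$ agreeing with $F$ on $K$, one could alternatively just verify $F\in C(K;\Rdd)$ (the bound in the proof of Lemma~\ref{lemma:extended_map_Linfty} essentially does this, but with the $L^\infty$ norm; here I want the $L^2$ topology) and then apply Tietze. I would take the first, more self-contained route because it automatically yields boundedness.

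The key steps, in order: (1) Take a sequence $A_n\to A$ in $L^2(\Td;\Rdd)$ with $A_n\in K$ and $A\in K$; write $\chi_n=\mathcal{G}(A_n)$, $\chi=\mathcal{G}(A)$, so by Proposition~\ref{prop:cellproblem_cont_L2}, $\nabla\chi_n\to\nabla\chi$ in $L^2(\Td;\Rdd)$. (2) Estimate
\[
\bigl|F(A_n)-F(A)\bigr|_F \le \int_{\Td}|A_n-A|_F\dy + \int_{\Td}\bigl|A_n\nabla\chi_n^T - A\nabla\chi^T\bigr|_F\dy,
\]
and split the second integrand as $A_n(\nabla\chi_n-\nabla\chi)^T + (A_n-A)\nabla\chi^T$. (3) Bound the first term of that split by $\|A_n\|_{L^\infty}\|\nabla\chi_n-\nabla\chi\|_{L^2}\le\beta\,\|\nabla\chi_n-\nabla\chi\|_{L^2}\to 0$ using the uniform bound $A_n\in\PD$ and Cauchy--Schwarz; bound the first integral by $\|A_n-A\|_{L^2}\to 0$ (Cauchy--Schwarz against the constant $1$ on $\Td$). (4) For the term $\int_{\Td}|(A_n-A)\nabla\chi^T|_F\dy$, the naive bound $\|A_n-A\|_{L^\infty}\|\nabla\chi\|_{L^2}$ is unavailable, so instead invoke Lemma~\ref{lemma:product_converges} with $g=$ (each column of) $\nabla\chi\in L^2(\Td;\Rd)$ to get $A_n\nabla\chi\to A\nabla\chi$ in $L^2$, hence $\int_{\Td}|(A_n-A)\nabla\chi^T|_F\dy\le\|(A_n-A)\nabla\chi^T\|_{L^2}\to 0$. (5) Conclude $F\in C(K;\Rdd)$; boundedness is immediate since $|F(A)|_F\le \sqrt{d}\bigl(\|A\|_{L^1}+\beta\|\nabla\chi\|_{L^2}\bigr)\le \sqrt{d}\,\beta\,(|\Td|^{1/2}+\sqrt{d}\,\beta/\alpha)$ by Proposition~\ref{prop:stab_infty}, uniformly over $\PD$. (6) Apply the Tietze extension theorem \cite{dugundji1951extension} componentwise to extend $F$ to $\mathcal{F}\in C(L^2(\Td;\Rdd);\Rdd)$; boundedness of the extension can be arranged (or simply follows from Tietze preserving sup-norm bounds).

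The only mildly delicate point — the "main obstacle," though it is really just a matter of bookkeeping — is step (4): one must resist the temptation to estimate $\int|(A_n-A)\nabla\chi^T|$ in $L^\infty$, which would reintroduce the very non-separability problem the $L^2$ framework is designed to avoid; instead the convergence $A_n\nabla\chi\to A\nabla\chi$ in $L^2$ must be obtained via the subsequence-and-dominated-convergence argument packaged in Lemma~\ref{lemma:product_converges}, exactly as was done inside the proof of Proposition~\ref{prop:cellproblem_cont_L2}. Everything else is routine Cauchy--Schwarz together with the uniform $\PD$ bounds and the a priori estimate from Proposition~\ref{prop:stab_infty}.
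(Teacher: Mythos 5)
Your proposal is correct and follows essentially the same route as the paper: continuity of $F$ on $K$ in the $L^2(\Td;\Rdd)$ topology is deduced from Proposition~\ref{prop:cellproblem_cont_L2} together with an estimate for the product $A\nabla\chi^T$ integrated over $\Td$, uniform boundedness of $F$ on $\PD$ is noted, and the extension $\mathcal{F}$ is then produced by the Tietze (Dugundji) extension theorem. The only difference is cosmetic: in your step (4) you invoke Lemma~\ref{lemma:product_converges}, whereas the direct Cauchy--Schwarz bound $\int_{\Td}\bigl|(A_n-A)\nabla\chi^T\bigr|_F\dy\le\|A_n-A\|_{L^2}\|\nabla\chi\|_{L^2}$ already suffices, which is precisely the H\"older-based product-continuity argument underlying the paper's proof.
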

\begin{proof}
Since \(\nabla : \dot{H}^1(\Td;\Rd) \to L^2(\Td;\Rdd)\) is a bounded operator, Lemma~\ref{prop:cellproblem_cont_L2} implies that the mapping \(A \mapsto A + A \nabla \chi^T\) is continuous as compositions, sums, and products of continuous functions are continuous. Now let \(A \in \PD\) then \(A \in L^1(\Td;\Rdd)\) since \(A \in L^\infty (\Td;\Rdd)\). Thus
\[\left | \int_{\Td} A \dy  \right |_F \leq \int_{\Td} |A|_F \dy \leq \|A\|_{L^2}  \]
by H{\"o}lder's inequality and the fact that \(\int_{\Td} \dy = 1\). Hence \(F \in C(K;\Rdd)\) as a composition of continuous maps. Again applying the Tietze 
extension theorem \cite{dugundji1951extension} to \(F\) implies the existence of \(\mathcal{F}\).
\end{proof}

To prove Proposition \ref{prop:stab_Lq}, we need to establish Lipschitz continuity. We first establish the following result, which is similar to the one proved in \cite{bonito2013adaptive} in Theorem 2.1. We show it again here both for completeness and because we specialize to the case of the cell problem \eqref{eqn:cellprob} with periodic boundary conditions rather than the system \eqref{eqn:ms_ellip} with Dirichlet boundary conditions.

\begin{lemma}\label{lem: Lpcont}
Let $A^{(1)}, A^{(2)} \in \PD$ and let $\chione, \chitwo$ be the corresponding solutions to \eqref{eqn:cellprob}.
Then
    \begin{equation}
    \|\chione - \chitwo \|_{\dot{H}^1} \leq \frac{\sqrt{d}}{\alpha}\left(\|\Atwo -\Aone\|_{L^2} + \|\nabla \chitwo\|_{L^p}\|\Atwo - \Aone\|_{L^q}\right)
    \end{equation}
    for $p \geq 2$ and $q = \frac{2p}{p-2}$.
\end{lemma}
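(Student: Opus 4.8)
The plan is to mimic the energy/duality argument already used in the proof of Proposition~\ref{prop:stab_infty}, but to be more careful about how the difference $\Atwo-\Aone$ is paired against gradients so that a Hölder split into $L^q$ and $L^p$ becomes available. I would work componentwise, using the decoupling \eqref{eq:decoupled_cell}: for each $\ell$, both $\chione_\ell$ and $\chitwo_\ell$ lie in $\dot H^1(\Td)$ and satisfy $-\nabla\cdot(\Aone\nabla\chione_\ell)=\nabla\cdot\Aone e_\ell$ and similarly for $(2)$. Subtracting the weak forms and testing with $v=\chione_\ell-\chitwo_\ell$, the periodic boundary terms vanish exactly as before, and one is left with an identity of the schematic form
\begin{equation*}
\int_{\Td}\nabla(\chione_\ell-\chitwo_\ell)\cdot\Aone\nabla(\chione_\ell-\chitwo_\ell)\dy
= \int_{\Td}\nabla(\chione_\ell-\chitwo_\ell)\cdot(\Atwo-\Aone)\bigl(\nabla\chitwo_\ell+e_\ell\bigr)\dy.
\end{equation*}
The key point is to keep the factor $\nabla\chitwo_\ell+e_\ell$ attached to $(\Atwo-\Aone)$ rather than to the coercivity term, which is exactly what makes $L^p$-integrability of $\nabla\chitwo$ the only extra ingredient needed.

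Next I would apply coercivity ($\alpha\ge$ from $\PD$) on the left, and on the right bound the $(\Atwo-\Aone)$ factor in $L^q$ and the remaining factor $\nabla\chitwo_\ell+e_\ell$ in $L^p$, with the gradient difference $\nabla(\chione_\ell-\chitwo_\ell)$ paired in $L^2$. Since $\tfrac1q+\tfrac1p+\tfrac12 = 1$ when $q=\tfrac{2p}{p-2}$ and $p\ge 2$, a triple Hölder inequality gives
\begin{equation*}
\alpha\|\chione_\ell-\chitwo_\ell\|_{\dot H^1}^2
\le \|\chione_\ell-\chitwo_\ell\|_{\dot H^1}\,\|\Atwo-\Aone\|_{L^q}\,\|\nabla\chitwo_\ell+e_\ell\|_{L^p},
\end{equation*}
so that $\|\chione_\ell-\chitwo_\ell\|_{\dot H^1}\le \tfrac1\alpha\|\Atwo-\Aone\|_{L^q}\|\nabla\chitwo_\ell+e_\ell\|_{L^p}$. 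To match the stated right-hand side I would then estimate $\|\nabla\chitwo_\ell+e_\ell\|_{L^p}$ by splitting off the constant $e_\ell$: since $\|e_\ell\|_{L^p(\Td)}=1$ (unit measure torus), one gets $\|\nabla\chitwo_\ell+e_\ell\|_{L^p}\le \|\nabla\chitwo_\ell\|_{L^p}+1$; bounding $\|\Atwo-\Aone\|_{L^q}\le\|\Atwo-\Aone\|_{L^2}$ is \emph{not} available for $q>2$ on a probability space the wrong way, so instead I would absorb the ``$+1$'' by noting $\|\Atwo-\Aone\|_{L^q}\cdot 1$ can be compared to $\|\Atwo-\Aone\|_{L^2}$ only via $q\le$—wait, here we want the bound with $\|\Atwo-\Aone\|_{L^2}$ as the lead term, which forces handling the $e_\ell$ piece separately by Cauchy–Schwarz: $\int \nabla(\chione_\ell-\chitwo_\ell)\cdot(\Atwo-\Aone)e_\ell\le \|\chione_\ell-\chitwo_\ell\|_{\dot H^1}\|\Atwo-\Aone\|_{L^2}$. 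So I would in fact split the right-hand integral into the $e_\ell$ term (handled in $L^2$) and the $\nabla\chitwo_\ell$ term (handled with the $L^q$–$L^p$–$L^2$ triple Hölder), giving per component $\|\chione_\ell-\chitwo_\ell\|_{\dot H^1}\le\tfrac1\alpha(\|\Atwo-\Aone\|_{L^2}+\|\nabla\chitwo_\ell\|_{L^p}\|\Atwo-\Aone\|_{L^q})$.

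Finally I would sum over $\ell=1,\dots,d$ using $\|\chione-\chitwo\|_{\dot H^1}^2=\sum_\ell\|\chione_\ell-\chitwo_\ell\|_{\dot H^1}^2$ together with $\|\nabla\chitwo_\ell\|_{L^p}\le\|\nabla\chitwo\|_{L^p}$, and pull out a factor $\sqrt d$ from the $\ell^2$-to-$\ell^1$ comparison over the $d$ components, yielding the claimed inequality with the prefactor $\sqrt d/\alpha$. The only genuinely nonroutine issue is that the estimate as stated presupposes $\nabla\chitwo\in L^p(\Td;\Rdd)$ for the relevant $p$; this is exactly the higher (Meyers-type) integrability of solutions to divergence-form elliptic equations with merely bounded measurable coefficients, and it holds only for $p$ not too large (equivalently $q$ not too small), which is precisely why Proposition~\ref{prop:stab_Lq} must restrict to $q\in(q_0,\infty]$. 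For the present lemma, however, $p\ge 2$ is simply a hypothesis and the finiteness of $\|\nabla\chitwo\|_{L^p}$ is carried along as an assumption; the Hölder bookkeeping $\tfrac1q+\tfrac1p+\tfrac12=1$ and the vanishing of the periodic boundary term are the two things to get exactly right, and neither is hard.
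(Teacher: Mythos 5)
Your argument is correct, and it reaches the stated bound by a route that differs from the paper's. The paper introduces an intermediate solution $\widetilde{\chi}$ of $-\nabla\cdot(\Atwo\nabla\widetilde{\chi})=\nabla\cdot\Aone$ and splits via the triangle inequality: the piece $\widetilde{\chi}-\chitwo$ (same coefficient, different right-hand sides) is controlled through the $\dot{H}^{-1}$ norm of $\nabla\cdot(\Aone-\Atwo)e_\ell$, giving the $\|\Atwo-\Aone\|_{L^2}$ term, while the piece $\chione-\widetilde{\chi}$ (same right-hand side, different coefficients) is controlled by the H\"older split $\|\nabla\chi_\ell\|_{L^p}\|\Atwo-\Aone\|_{L^q}$. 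You instead subtract the two weak formulations directly, test with $v=\chione_\ell-\chitwo_\ell$, and split the resulting right-hand side $\int\nabla v\cdot(\Atwo-\Aone)(\nabla\chitwo_\ell+e_\ell)\dy$ into the $e_\ell$ part (Cauchy--Schwarz, yielding the $L^2$ term) and the $\nabla\chitwo_\ell$ part (triple H\"older with $\tfrac1q+\tfrac1p+\tfrac12=1$, yielding the $L^q$--$L^p$ term); your error identity is easily verified by writing $\Aone\nabla\chione_\ell-\Atwo\nabla\chitwo_\ell=\Aone\nabla(\chione_\ell-\chitwo_\ell)+(\Aone-\Atwo)\nabla\chitwo_\ell$, the periodic boundary terms vanish as you say, and the componentwise summation with the $\sqrt{d}$ factor is the same as in the paper. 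The two approaches give identical constants; yours is slightly more economical (no auxiliary PDE needed) and has the cosmetic advantage of producing $\|\nabla\chitwo\|_{L^p}$ exactly as in the lemma's statement, whereas the paper's proof ends up with $\|\nabla\chione_\ell\|_{L^p}$ and relies on the symmetry of swapping the indices. You are also right that finiteness of $\|\nabla\chitwo\|_{L^p}$ is a hypothesis carried along here and is only supplied later (Meyers-type integrability, Lemma~\ref{condP}) for $p$ below a threshold, which is exactly how the paper deploys this lemma in proving Proposition~\ref{prop:stab_Lq}.
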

\begin{proof}
As in the proof of Proposition \ref{prop:stab_infty}, we denote $f^{(i)} = \nabla \cdot A^{(i)}$ for $i \in \{1,2\}$ for simplicity of notation and to be easily comparable to the proof of Theorem 2.1 in \cite{bonito2013adaptive}. Since both sides of the cell problem equation \eqref{eqn:cellprob} depend on $A^{(i)}$, we introduce $\widetilde{\chi}$ as the solution of 
    \begin{align}
        -\nabla \cdot\left(\nabla \widetilde{\chi} \Atwo\right) & = \nabla \cdot \Aone, \quad \widetilde{\chi} \in \dot{H}^1(\Td;\Rd)
    \end{align}
    as an intermediate function. We obtain bounds using $\widetilde{\chi}$ and apply the triangle inequality to $$\|(\chione - \widetilde{\chi}) + (\widetilde{\chi} -\chitwo)\|_{\dot{H}^1}$$ to obtain a bound on $\|\chione - \chitwo\|_{\dot{H}^1}$. 
    From the na{\"i}ve perturbation bound in \eqref{eqn:per_bnd_inf_f} we have $$\|\widetilde{\chi}_{\ell} - \chitwo_{\ell}\|_{\dot{H}^1} \leq \frac{1}{\alpha}\|\fone_{\ell} - \ftwo_{\ell}\|_{\dot{H}^{-1}},$$ so we are left to bound $\|\chione_{\ell} - \widetilde{\chi}_{\ell}\|_{\dot{H}^1}$. We note that
    \begin{align*}
        \nabla \cdot \left(\Atwo \nabla \widetilde{\chi}_{\ell}\right) & = \nabla \cdot\left(\Aone \nabla\chione_{\ell}\right) \\
        \int_{\Td} \Atwo \nabla\widetilde{\chi}_{\ell} \cdot \nabla v \; \dy & = \int_{\Td} \Aone\nabla\chione_{\ell}\cdot \nabla v \dy \quad \forall v \in \dot{H}^1(\Td;\R)
    \end{align*}
    Letting $v = \chione_{\ell} - \widetilde{\chi}_{\ell}$, 
    \begin{align*}
        \int_{\Td} \Atwo \nabla\widetilde{\chi}_{\ell} \cdot \left(\nabla \chione_{\ell} - \nabla \widetilde{\chi}_{\ell}\right)\dy & = \int_{\Td}\Aone \nabla \chione_{\ell} \cdot \left(\nabla \chione_{\ell} - \nabla \widetilde{\chi}_{\ell}\right) \dy\\
        \int_{\Td}A^{(2)}\left(\nabla \widetilde{\chi}_{\ell} -\nabla \chione_{\ell}\right) &\cdot \left(\nabla \widetilde{\chi}_{\ell} - \nabla \chione_{\ell}\right) \dy \\ & = \int_{\Td}\left(\Atwo-\Aone\right)\nabla \chione_{\ell} \cdot \left(\nabla \chione_{\ell} - \nabla \widetilde{\chi}_{\ell}\right)\dy\\
        \alpha \|\widetilde{\chi}_{\ell} - \chione_{\ell}\|_{\dot{H}^1} & \leq \|(\Atwo - \Aone)(\nabla \chione_{\ell})\|_{L^2}
    \end{align*}
    Applying H{\"o}lder for $L^2$, we get 
    \begin{equation}
        \|\widetilde{\chi}_{\ell} - \chione_{\ell}\|_{\dot{H}^1} \leq \frac{1}{\alpha}\|\nabla \chione_{\ell}\|_{L^p}\|\Atwo - \Aone\|_{L^q}
    \end{equation}
    for $q = \frac{2p}{p-2}$ where $p \in [2,\infty]$.
    Putting the two parts together, we have that
    \begin{align*}
        \|\chitwo_{\ell} - \chione_{\ell}\|_{\dot{H}^1} &\leq \frac{1}{\alpha}\|\nabla\cdot \Atwo e_{\ell} -\nabla \cdot \Aone e_{\ell}\|_{\dot{H}^{-1}} +  \frac{1}{\alpha}\|\nabla \chione_{\ell}\|_{L^p}\|\Atwo - \Aone\|_{L^q}\\
        & \leq \frac{1}{\alpha}\|\Atwo - \Aone\|_{L^2} + \frac{1}{\alpha}\|\nabla \chione_{\ell}\|_{L^p}\|\Atwo - \Aone\|_{L^q}
    \end{align*}
    Combining bounds for all $d$ dimensions yields the result. 
\end{proof}
\begin{remark}
    Since $L^q(\Omega) \hookrightarrow L^2(\Omega)$ for bounded $\Omega \subset \R^d$ and $q \geq 2$, we could also write the bound of Lemma \ref{lem: Lpcont} as 
    \begin{equation*}
        \|\chitwo_{\ell} - \chione_{\ell}\|_{\dot{H}^1} \leq \frac{1}{\alpha}\left(C + \|\nabla \chione_{\ell}\|_{L^p}\right)\|\Atwo - \Aone\|_{L^q}
    \end{equation*}
    for some $C$ dependent only on $q$ and $\Omega$.
\end{remark}

The result of Lemma \ref{lem: Lpcont} is unhelpful if $\|\nabla \chi\|_{L^p}$ is unbounded. In this setting, it is not possible for Lemma \ref{lem: Lpcont} to result in Lipschitz continuity as a map from $L^2$ to $\dot{H}^1$. Instead, we seek to bound $\|\nabla \chi\|_{L^p}$ for some $p$ satisfying  $2 < p < \infty$.

Before continuing, we establish a bound on the gradient of the solution to the Poisson equation on the torus. This follows the strategy of \cite{bonito2013adaptive} for the Dirichlet problem. In order to avoid extra factors of $2\pi$ in all formulae, we work on the rescaled torus denoted $\Tdtwo = [0,2\pi]^d$ with opposite faces identified for the following result of Lemma \ref{lemma:reisz_transform}. As we work on the torus, it is useful to first set up notation for the function spaces of interest. Let 
\begin{equation*}
    \mathcal{D}(\Tdtwo) = C^{\infty}_c(\Tdtwo) = C^{\infty}(\Tdtwo)
\end{equation*}
be the space of test functions where the last equality follows from compactness of the torus. Functions can be either $\R$ or $\mathbb{C}$ valued hence we do not explicitly specify the range. We equip $\mathcal{D}(\Tdtwo)$ with a locally convex topology generated by an appropriate family of semi-norms, see, for example, \cite[Section 3.2.1]{triebel1987}. Any function $g \in \mathcal{D}(\Tdtwo)$ can be represented by its Fourier series
\begin{equation*}
    g(x) = \sum_{k \in \mathbb{Z}^d}\widehat{g}(k) \text{e}^{ix \cdot k}
\end{equation*}
where $\widehat{g}$ denotes the Fourier transform of $g$ and convergence of the right-hand side sum is with respect to the topology of $\mathcal{D}(\Tdtwo)$, and $i$ denotes the imaginary unit. It holds that $\widehat{g} \in \mathcal{S}(\mathbb{Z}^d)$, the Schwartz space of rapidly decreasing functions on the integer lattice, so we have 
\begin{equation*}
    |\widehat{g}(k)| \leq c_m (1 + |k|)^{-m}, \quad m = 0, 1, \dots
\end{equation*}
for some constants $c_m$. We may then define the topological (continuous) dual space of $\mathcal{D}(\Tdtwo)$, the space of distributions, denoted $\mathcal{D}'(\Tdtwo)$, which can be described as follows: the
condition that $f \in \mathcal{D}'(\Tdtwo)$ is characterized by the property
\begin{equation*}
    |\widehat{f}(k)| \leq b_m (1 + |k|)^m, \quad m = 0, 1, \dots
\end{equation*}
for some constants $b_m$. We take the weak${\text -}^*$ topology on $\mathcal{D}'(\Tdtwo)$ and generally use the prime notation for any such dual space. For any \(-\infty < s < \infty\), we define the fractional Laplacian as 
\begin{equation}\label{eqn:LaplaceFourier}
    (-\Delta)^s f = \sum_{k \in \mathbb{Z}^d\setminus\{0\}} |k|^{2s} \widehat{f}(k) \text{e}^{ik\cdot x}
\end{equation}
where the right-hand side sum converges in the topology of $\mathcal{D}'(\Tdtwo)$. It is easy to see that $(-\Delta)^s : \mathcal{D}'(\Tdtwo) \to \mathcal{D}'(\Tdtwo)$ is continuous. Furthermore, for any \(j \in \{1,\dots,d\}\), we define the family of operators $\tilde{R}_j : \mathcal{D}'(\Tdtwo) \to \mathcal{D}'(\Tdtwo) $, defining periodic Riesz transforms, by
\begin{equation}\label{eqn:RieszTransform}
\tilde{R}_j f = \sum_{k \in \mathbb{Z}^d} - \frac{i k_j}{|k|} \widehat{f}(k) \text{e}^{ik \cdot x}
\end{equation}
where we identify $\frac{k_j}{|k|}|_{k=0} = \lim_{|k|\to 0} \frac{k_j}{|k|} = 0$. Again, we stress that convergence of the right-hand side sum is in the topology of  $\mathcal{D}'(\Tdtwo)$. Lastly, we denote by $\mathcal{S}(\Rd)$ and $\mathcal{S}'(\Rd)$ the Schwartz space and the space of tempered distributions on $\Rd$ respectively; see, for example, \cite[Chapter 1]{stein1971introduction} for the precise definitions.

The following lemma establishes boundedness of the periodic Riesz transform on $L^p (\Tdtwo)$. It is essential in proving boundedness of the gradient to the solution of the Poisson equation on the torus. The result is essentially proven in \cite{stein1971introduction}. We include it here, in our specific torus setting, giving the full argument  for completeness.

\begin{lemma}
    \label{lemma:reisz_transform}
    There exists a constant \(c = c(d,p) > 0\) such that,
    for any \(j \in \{1,\dots,d\}\) and any $f \in L^p(\Tdtwo)$ for some $2 \leq p < \infty$, we have
    \[  \| \tilde{R}_j f \|_{L^p (\Tdtwo)} \leq c \|f\|_{L^p (\Tdtwo)}.\]
\end{lemma}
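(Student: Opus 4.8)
The plan is to transfer the classical boundedness of the Riesz transforms on $L^p(\Rd)$ (as developed in \cite{stein1971introduction}) to the periodic setting by a standard transference argument. Recall that the (full-space) Riesz transform $R_j$ has Fourier multiplier $-i\xi_j/|\xi|$, which is a Calder\'on--Zygmund multiplier, hence $R_j$ is bounded on $L^p(\Rd)$ for all $1 < p < \infty$; this is exactly the content of \cite[Chapter III]{stein1971introduction}. The periodic operator $\tilde R_j$ defined by \eqref{eqn:RieszTransform} has the same symbol restricted to the integer lattice, so the natural tool is the de Leeuw-type / Coifman--Weiss transference principle: a multiplier operator on $\Rd$ with a symbol that is (say) regulated and continuous at the lattice points restricts to a multiplier operator on $\Tdtwo$ with the same $L^p$ bound. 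I would either invoke this principle directly or, to keep the exposition self-contained as the paper promises, run the short transference argument by hand.

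Concretely, the steps I would carry out are: (i) fix $f \in C^\infty(\Tdtwo)$ (dense in $L^p$), and note it suffices to prove the inequality with a constant independent of $f$ on this dense class and then extend by density and continuity of $\tilde R_j$; (ii) regularize the symbol $m(\xi) = -i\xi_j/|\xi|$ away from the origin — the only issue is the jump discontinuity at $\xi = 0$, but since $\tilde R_j$ annihilates the zero mode we may freely multiply $m$ by a smooth cutoff that is $1$ on the lattice points $k \neq 0$ and $0$ near the origin, obtaining a symbol $\tilde m \in C^\infty(\Rd)$ that agrees with $m$ on $\mathbb{Z}^d \setminus \{0\}$ and is a bounded Calder\'on--Zygmund-type multiplier, so the associated operator $\tilde T$ on $\Rd$ is $L^p(\Rd)$-bounded by \cite{stein1971introduction}; (iii) apply the transference step: given periodic $f$, test $\tilde T$ against dilated/truncated copies $f \cdot \phi(\cdot/R)$ with $\phi$ a smooth bump, use that on a large torus $\tilde T$ acts like $\tilde R_j$ up to boundary errors that vanish as $R \to \infty$ (because the multiplier is smooth, the kernel of $\tilde T$ decays, so convolution on $\Rd$ restricted to a large period is close to periodic convolution), and pass to the limit to get $\|\tilde R_j f\|_{L^p(\Tdtwo)} \le c \|\tilde T\|_{L^p(\Rd) \to L^p(\Rd)} \|f\|_{L^p(\Tdtwo)}$; (iv) collect $c = c(d,p)$ from the $\Rd$ bound. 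The restriction $2 \le p < \infty$ in the statement is only a convenience (the argument gives $1 < p < \infty$), so no extra care is needed there.

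The main obstacle — and the only genuinely technical point — is making the transference step (iii) rigorous with the singular symbol: one must handle the passage from the continuous multiplier on $\Rd$ to the discrete multiplier on $\mathbb{Z}^d$ while controlling the truncation errors, and verify that smoothing out the origin does not change the operator on periodic functions. This is precisely where the ``annihilates the zero Fourier mode'' observation is essential: it decouples the behaviour of the symbol near $0$ from the action of $\tilde R_j$ on $\dot L^p$-type functions. An alternative, perhaps cleaner, route that avoids transference entirely is to prove directly that $\tilde R_j$ is a periodic Calder\'on--Zygmund operator: one writes it as convolution on $\Tdtwo$ against a kernel obtained by periodizing the Riesz kernel $c_d\, y_j/|y|^{d+1}$, checks the periodized kernel still satisfies the H\"ormander regularity condition away from the diagonal and defines a bounded operator on $L^2(\Tdtwo)$ (immediate from $|{-ik_j/|k|}| \le 1$ via Plancherel), and then invokes the periodic Calder\'on--Zygmund theorem to upgrade $L^2$-boundedness plus kernel estimates to $L^p$-boundedness for $1 < p < \infty$. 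Either way the constant depends only on $d$ and $p$, as claimed. I would present the transference argument since it most directly matches the cited source \cite{stein1971introduction}, relegating the kernel estimates to a remark.
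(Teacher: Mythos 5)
Your proposal is correct and follows essentially the same route as the paper: the paper also transfers the $L^p(\Rd)$ bound for the classical Riesz transform $R_j$ (with kernel $K_j$ and multiplier $-i\xi_j/|\xi|$, from \cite{stein1971introduction}) to the torus via the Stein--Weiss transference theorem for multipliers restricted to the lattice $\mathbb{Z}^d$. The only cosmetic difference is in handling the discontinuity of the symbol at the origin: rather than smoothing it with a cutoff and using that $\tilde R_j$ annihilates the zero mode, the paper verifies the ``regulated'' hypothesis of \cite[Chapter 7, Theorem 3.8, Corollary 3.16]{stein1971introduction} directly, noting that by antisymmetry $\int_{|x|\le\epsilon} -\frac{i x_j}{|x|}\,dx = 0$ for every $\epsilon>0$, which matches the convention $\frac{k_j}{|k|}\big|_{k=0}=0$ and yields $\|\tilde R_j\|_{L^p\to L^p} \le \|R_j\|_{L^p\to L^p}$.
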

\begin{proof}
    Let \(g \in L^2 (\Rd) \cap L^p (\R^d)\) for some \(1 < p < \infty\). For any \(j \in \{1,\dots,d\}\), define the family of operators $R_j$ by
    \[(R_j g)(x) = \lim_{\delta^{-1}, \epsilon \to 0^+} \int_{\delta \geq |t| \geq \epsilon} g(x-t) K_j (t) \: dt,\]
    where
    \[K_j (t) = \frac{\Gamma \big ( (d+1)/2 \big ) t_j}{\pi^{(d+1)/2} |t|^{d+1}}\]
    and \(\Gamma\) denotes the Euler-Gamma function.
    By \cite[Chapter 4, Theorem 4.5]{stein1971introduction}, \(K_j \in \mathcal{S}'(\R^d)\) and its Fourier transform satisfies 
    \[\widehat{K}_j (t) = - \frac{i t_j}{|t|}.\]
 Therefore, for any \(\phi \in \mathcal{S}(\Rd)\), we have 
    \[(K_j * \phi)^{\widehat{}} \: (t) = - \frac{i t_j}{|t|} \widehat{\phi}(t)\]
    where \(*\) denotes convolution, see, for example, \cite[Chapter 1, Theorem 3.18] {stein1971introduction}. Since \(g \in L^2(\R^d)\), we therefore find that, by \cite[Chapter 6, Theorem 2.6]{stein1971introduction},
    \begin{equation}\label{eqn:RieszL2}
        (R_j g)^{\widehat{}} \: (x) = - \frac{i x_j}{|x|} \widehat{g}(x)
    \end{equation}
    for Lebesgue almost every \(x \in \R^d\). The result \cite[Chapter 6, Theorem 2.6]{stein1971introduction} further shows that there exists a constant \(c = c(d,p) > 0\) such that
    \[\|R_j g\|_{L^p (\Rd)} \leq c \|g\|_{L^p (\Rd)}.\]
    We note from \eqref{eqn:RieszL2} and the definition \eqref{eqn:RieszTransform} that $\tilde{R}_j$ may be viewed as $R_j$ with the restriction of the Fourier multiplier $ -\frac{i x_j}{|x|}$ to the lattice $\mathbb{Z}^d$. We can therefore use the transference theory of \cite{stein1971introduction} to establish boundedness of $\tilde{R}_j$ from the boundedness of $R_j$.
    In particular, note that the mapping \(x \mapsto -\frac{i x_j} {|x|}\) is continuous at all \(x \in \R^d\) except \(x=0\).  However, by symmetry, we have that, for all \(\epsilon > 0\)
    \[\int_{|x| \leq \epsilon} -\frac{i x_j}{|x|} \: dx = 0.\]
    Therefore we can apply \cite[Chapter 7, Theorem 3.8, Corollary 3.16]{stein1971introduction} to conclude that, since \(R_j\) is bounded from \(L^p (\Rd)\) to \(L^p(\Rd)\),  \(\tilde{R}_j\) is bounded from \(L^p(\Tdtwo)\) to \(L^p (\Tdtwo)\)
    with 
    \[\|\tilde{R}_j\|_{L^p (\Tdtwo) \to L^p (\Tdtwo)} \leq \|R_j\|_{L^p (\Rd) \to L^p (\Rd)}. \]
    This implies the desired result.
\end{proof}
We define the Bessel potential spaces by
\begin{equation*}
    L^{s,p} (\Tdtwo) = \{u \in \mathcal{D}'(\Tdtwo) \; | \; \|u\|_{L^{s,p} (\Tdtwo)} \coloneqq \|(I -\Delta)^{s/2} u\|_{L^p (\Tdtwo)} < \infty \}
\end{equation*}
for any \(- \infty < s < \infty\) and \(1 < p < \infty\). We also define the homogeneous version of these spaces, sometimes called the Riesz potential spaces, by
\begin{equation*}
    \dot{L}^{s,p} (\Tdtwo) = \{u \in \mathcal{D}'(\Tdtwo) \; | \; \|u\|_{\dot{L}^{s,p} (\Tdtwo)} \coloneqq \|(-\Delta)^{s/2} u\|_{L^p (\Tdtwo)} < \infty ,\; \int_{\Tdtwo} u \dy = 0 \}.
\end{equation*}
It is clear that $\dot{L}^{s,p} (\Tdtwo) \subset L^{s,p} (\Tdtwo)$ is closed subspace. We then have the following result for the Poisson equation.

\begin{lemma}\label{lem:condPLaplace}
    For each $f \in L^{s,p}(\Tdtwo)$, for $-\infty < s < \infty$ and $2 \leq p<\infty$, the solution $u$ of the equation 
    \begin{equation}\label{eqn:Laplace}
        - \Delta u = f, \quad u \text{ 1-periodic, } \int_{\Tdtwo} u \dy = 0
    \end{equation}
    satisfies 
    \begin{equation}
        \|\nabla u\|_{\dot{L}^{s+1,p} (\Tdtwo)} \leq K \|f\|_{\dot{L}^{s,p}(\Tdtwo)}
    \end{equation} 
    for some finite $K > 0$ depending only on $p$ and $d$.
\end{lemma}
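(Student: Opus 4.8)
The plan is to solve \eqref{eqn:Laplace} explicitly in Fourier series and then recognize the operator sending $f$ to $(-\Delta)^{(s+1)/2}\partial_j u$ as a periodic Riesz transform composed with $(-\Delta)^{s/2}$, so that Lemma~\ref{lemma:reisz_transform} applies termwise. First I would record the (necessary) solvability condition: integrating \eqref{eqn:Laplace} over $\Tdtwo$ annihilates the left-hand side, since $\int_{\Tdtwo}\Delta u\,\dy=0$, so the existence of a solution forces $\widehat{f}(0)=0$. Granting this, the unique mean-zero solution is, in the sense of $\mathcal{D}'(\Tdtwo)$, given by the Fourier series $u=\sum_{k\in\mathbb{Z}^d\setminus\{0\}}|k|^{-2}\widehat{f}(k)\,\mathrm{e}^{ik\cdot x}$; convergence of this series in $\mathcal{D}'(\Tdtwo)$ and the identity $-\Delta u=f$ follow from the growth characterization of $\mathcal{D}'(\Tdtwo)$ recalled above together with the definition \eqref{eqn:LaplaceFourier} of the fractional Laplacian.

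Next I would compute, for each $j\in\{1,\dots,d\}$, the Fourier coefficients of $(-\Delta)^{(s+1)/2}\partial_j u$: applying $\partial_j$ multiplies $\widehat{u}(k)$ by $ik_j$ and applying $(-\Delta)^{(s+1)/2}$ multiplies by $|k|^{s+1}$, so the coefficient at $k\neq 0$ equals
\[
|k|^{s+1}\cdot ik_j\cdot|k|^{-2}\,\widehat{f}(k)\;=\;-\Bigl(-\tfrac{ik_j}{|k|}\Bigr)\,|k|^{s}\widehat{f}(k).
\]
Comparing with \eqref{eqn:LaplaceFourier} and \eqref{eqn:RieszTransform}, the factor $|k|^{s}\widehat{f}(k)$ is precisely the $k$-th Fourier coefficient of $(-\Delta)^{s/2}f$, while $-ik_j/|k|$ is the Fourier multiplier of $\tilde{R}_j$; hence, as an identity in $\mathcal{D}'(\Tdtwo)$,
\[
(-\Delta)^{(s+1)/2}\partial_j u\;=\;-\,\tilde{R}_j\bigl((-\Delta)^{s/2}f\bigr).
\]
If $f\notin\dot{L}^{s,p}(\Tdtwo)$, i.e.\ $(-\Delta)^{s/2}f\notin L^p(\Tdtwo)$, the asserted bound is trivially satisfied; otherwise $(-\Delta)^{s/2}f\in L^p(\Tdtwo)$ and Lemma~\ref{lemma:reisz_transform} yields a constant $c=c(d,p)>0$ with $\|(-\Delta)^{(s+1)/2}\partial_j u\|_{L^p(\Tdtwo)}=\|\tilde{R}_j((-\Delta)^{s/2}f)\|_{L^p(\Tdtwo)}\le c\,\|(-\Delta)^{s/2}f\|_{L^p(\Tdtwo)}=c\,\|f\|_{\dot{L}^{s,p}(\Tdtwo)}$. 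Since the Fourier coefficient of $\partial_j u$ at $k=0$ vanishes, $\partial_j u\in\dot{L}^{s+1,p}(\Tdtwo)$; summing over $j=1,\dots,d$ and invoking finite-dimensional norm equivalence to pass to whichever norm on $\nabla u$ is in force gives $\|\nabla u\|_{\dot{L}^{s+1,p}(\Tdtwo)}\le K\|f\|_{\dot{L}^{s,p}(\Tdtwo)}$ with $K=K(d,p)$.

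The main obstacle is not the computation but making the distributional bookkeeping airtight: one must check that the Fourier series defining $u$ genuinely lies in $\mathcal{D}'(\Tdtwo)$ and solves \eqref{eqn:Laplace}, that $(-\Delta)^{(s+1)/2}$, $\partial_j$, and $\tilde{R}_j$ may be applied term by term and composed as Fourier-multiplier operators on $\mathcal{D}'(\Tdtwo)$ so that the displayed identity is legitimate, and — most delicately — that the hypothesis is exactly what renders the right-hand side finite, which is where the mean-zero reduction and the split into the finite and infinite cases enter; the condition $(-\Delta)^{s/2}f\in L^p(\Tdtwo)$ is precisely the hypothesis under which Lemma~\ref{lemma:reisz_transform} is applicable. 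Everything remaining is routine manipulation of Fourier multipliers together with the already-established $L^p$ boundedness of the periodic Riesz transforms.
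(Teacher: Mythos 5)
Your proof is correct and is essentially the paper's own argument: the paper writes the identity $\tilde{R}_j = -\partial_{x_j}(-\Delta)^{-1/2}$ at the operator level and substitutes $u=(-\Delta)^{-1}f$, whereas you verify the same multiplier identity coefficient-by-coefficient, but both reduce the estimate to applying Lemma~\ref{lemma:reisz_transform} to $(-\Delta)^{s/2}f$ and conclude by finite-dimensional norm equivalence. Your extra remarks on the compatibility condition $\widehat{f}(0)=0$ and the distributional bookkeeping are harmless refinements of the same approach.
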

\begin{proof}
%Let the Riesz transform $R_j: \mathcal{S}'(\Rd) \to \mathcal{S}'(\Rd)$ be defined by 
%\begin{align*} \widehat{(R_j q)}(\xi) &= \frac{-i\xi_j}{|\xi|} \widehat{q}(\xi), \quad \forall q \in %\mathcal{S}'(\Rd)\\
%&= a(\xi)\widehat{q}(\xi).
%\end{align*}
%We see that $a \in C(\Rd)$. By Lemma \ref{lemma:reisz_transform}, $R_j \in \mathcal{L}(L^p(\Rd); L^p(\Rd))$, the space of bounded linear operators from $L^p(\Rd)$ to itself, for $2\leq p< \infty$. We may periodize this result as follows. Defining
%\begin{equation}\label{eqn:RieszTd}
%    \widetilde{R}_j(q) = \sum_{k \in \mathbb{Z}^d} \frac{-i k_j}{|k|} \widehat{q}(k)e^{ik\cdot x}, \quad %\forall q \in \mathcal{D}'(\Tdtwo),
%\end{equation}
%we have that $\widetilde{R}_j \in \mathcal{L}(L^p(\Tdtwo); L^p(\Tdtwo))$ by Theorem 3.8 in Chapter 7 of \cite{stein1971introduction}. By \eqref{eqn:RieszTd} and \eqref{eqn:LaplaceFourier}, we see that 
%$$\widetilde{R}_j = \p_{x_j} (-\Delta)^{-1/2}.$$
%Recall, from the proof of Lemma~\ref{lemma:reisz_transform}, the definition of the periodized Riesz transform, 
%\[(\tilde{R}_j g)(x) = \sum_{k \in \mathbb{Z}^d} - \frac{i k_j}{|k|} \widehat{g}(k) \text{e}^{ik \cdot x}\]
%for any $g \in L^2 (\Tdtwo)$. 
From the definitions \eqref{eqn:LaplaceFourier} and \eqref{eqn:RieszTransform}, it is easy to see that the Riesz transform can be written as
\[\tilde{R}_j = - \partial_{x_j} (-\Delta)^{-1/2} \]
in the sense of distributions. Consider now equation \eqref{eqn:Laplace} with $f \in L^{s,p} (\Tdtwo)$ for $2 \leq p < \infty$. We have that
\begin{align*}
    \|\p_{x_j}u\|_{\dot{L}^{s+1,p} (\Tdtwo)} & = \|\p_{x_j}(-\Delta)^{-1}f\|_{\dot{L}^{s+1,p} (\Tdtwo)} \\
    & = \|\p_{x_j}(-\Delta)^{-1/2}(-\Delta)^{s/2}f\|_{L^p (\Tdtwo)}\\
    & = \|\tilde{R}_j (-\Delta)^{s/2}f\|_{L^p (\Tdtwo)}.
    %& \leq c(d,p) \|(-\Delta)^{s/2}f\|_{\dot{L}^p}\\
    %& = c(d,p) \|f\|_{\dot{W}^{s,p}}.
\end{align*}
It is clear that 
\[\|(-\Delta)^{s/2}f\|_{L^p (\Tdtwo)} = \|f\|_{\dot{L}^{s,p} (\Tdtwo)} < \infty\]
hence $(-\Delta)^{s/2}f \in L^p (\Tdtwo)$. We can thus apply Lemma~\ref{lemma:reisz_transform} to find a constant $c = c(d,p) > 0$ such that 
\[\|\p_{x_j}u\|_{\dot{L}^{s+1,p} (\Tdtwo)} \leq c \|(-\Delta)^{s/2}f\|_{L^p (\Tdtwo)} = c \|f\|_{\dot{L}^{s,p} (\Tdtwo)}.\]
The result follows by finite-dimensional norm equivalence. 
%Therefore, for each $1<p<\infty$, there exists a constant $C$ dependent only on $p$ and $d$ such that 
%\begin{equation}\label{eqn:LaplaceResult}
%    \|\nabla u\|_{\dot{W}^{s+1,p}} \leq C \|f\|_{\dot{W}^{s,p}}.
%\end{equation}
    
\end{proof}

Next we define the homogeneous Sobolev spaces on the torus as 
\begin{equation}
    \dot{W}^{k,p}(\Td) = \{u \in W^{k,p}(\Td) \; | \; u \text{ is } 1 \text{-periodic}, \; \int_{\Td} u \dy = 0 \}
\end{equation}
for $k=0,1,\dots,$ and $1 \leq p \leq \infty$ with the standard norm on $W^{k,p}$, see, for example \cite{adams2003sobolev}. 

\begin{remark}
\label{remark:sobolev_duals}
By \cite[Section 3.5.4]{triebel1987}, we have that, for any \(k = 0,1,\dots\) and $1 < p < \infty$,
\[L^{k,p} (\Td) = W^{k,p} (\Td), \qquad\dot{L}^{k,p} (\Td) = \dot{W}^{k,p} (\Td).\]
Furthermore, by \cite[Section 3.5.6]{triebel1987},
\begin{align*}
    W^{-k,p'} (\Td) &= \big( W^{k,p} (\Td) \big )' = \big ( L^{k,p} (\Td) \big )' =  L^{-k,p'} (\Td), \\
    \dot{W}^{-k,p'} (\Td) &= \big( \dot{W}^{k,p} (\Td) \big )' = \big ( \dot{L}^{k,p} (\Td) \big )' =  \dot{L}^{-k,p'} (\Td)
\end{align*}
where \(p'\) is the H{\"o}lder conjugate of \(p\) i.e. \(1/p + 1/p' = 1\).
\end{remark}

%Before continuing, we establish some additional notation. In the following, $p'$ indicates the conjugate exponent to $p$: $\frac{1}{p} + \frac{1}{p'} = 1$. Similarly, we use the notation $(W)'$ to indicate the dual space of a space $W$, or, if the primal space is explicit as $W^{k,p'}$, we use the notation $W^{-k,p}$ to indicate the dual space. In the work of \cite{bonito2013adaptive}, \cite{jerison1995inhomogeneous} and \cite{gilbarg1977elliptic}, the following notation is used to refer to the dual space; all are equivalent:
%\[W^{-1}(L_p) = (W^{1,p'})' = \text{ dual of } W^{1,p'} = W^{-1,p}.\] 

%To prove a bound on $\|\nabla \chi\|_{L^p}$, we follow the strategy of \cite{bonito2013adaptive} and adapt the methodology to the case of periodic boundary conditions and the cell problem.

%We define the periodic Sobolev space as 
%\begin{equation}
%    \dot{W}^{k,p}(\Td) = \{u \in W^{k,p}(\Td) \; | \; u \text{ 1-periodic }, \; \int_{\Td} u \dy = 0 \}
%\end{equation}
%with norm $\|(-\Delta)^{k/2} f\|_{L^p}$.

%We use of the following result for the Poisson equation with periodic boundary conditions. This result is shown for Dirichlet boundary conditions in Jerison and Kenig \cite{jerison1995inhomogeneous} for a restricted range of $p$. The case of periodic boundaries is less technical, and we can reach a more general result. As we are unaware of a proof of this particular result in the literature, we proof an analog of result of Jerison and Kenig for the periodic case in the following lemma. 

In the following, we use the notation 
\begin{equation}
    [K_0,K_1]_{\theta, q}
\end{equation}
to denote the real interpolation between two Banach spaces continuously embedded in the same Hausdorff topological space, as described in \cite{adams2003sobolev}. We also need Lemma A1 from \cite{guermond2009lbb}, which we have copied below as Lemma \ref{lem:guermond}
to ease readability. Although this lemma was written only for $q=2$, the result still holds for our $q > 2$ with a very similar proof.

\begin{lemma}\label{lem:guermond}
    Let $E_1 \subset E_0$ be two Banach spaces with $E_1$ continuously embedded in $E_0$. Let $T: E_j \to E_j$ be a bounded operator with closed range and assume that $T$ is a projection, $j \in \{0,1\}$. Denote by $K_0$ and $K_1$ the ranges of $T|_{E_0}$ and $T|_{E_1}$ respectively. Then the following two spaces coincide with equivalent norms: 
    \[ [K_0, K_1]_{\theta, q} = [E_0, E_1]_{\theta, q} \cap K_0 \quad \forall \theta \in (0,1).\]
\end{lemma}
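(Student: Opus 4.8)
The plan is to prove the lemma by a direct comparison of $K$-functionals, exploiting the single algebraic fact that a projection acts as the identity on its range. First I would check that $(K_0,K_1)$ is a legitimate compatible Banach couple. Since $T$ restricted to $E_j$ is a bounded projection, its range is $K_j = \ker\big((I-T)|_{E_j}\big)$, which is closed in $E_j$ and hence a Banach space under the inherited norm. Moreover $K_1 = T(E_1) \subseteq T(E_0) = K_0$ because $E_1 \subseteq E_0$, so $K_0 + K_1 = K_0$ and $K_0 \cap K_1 = K_1$; in particular $[K_0,K_1]_{\theta,q}$ is well-defined and sits continuously between $K_1$ and $K_0$. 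The identity I will use throughout is that, for $u \in K_0$, the projection property $T^2 = T$ gives $Tu = u$.

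Next I would compare the $K$-functionals of the couples $(E_0,E_1)$ and $(K_0,K_1)$ on their common space $K_0$. For $u \in K_0$ and $t>0$, any decomposition $u = u_0 + u_1$ with $u_j \in K_j$ is also an admissible decomposition with $u_j \in E_j$, with the same norms, so $K(t,u;E_0,E_1) \leq K(t,u;K_0,K_1)$. For the reverse inequality, take an arbitrary decomposition $u = v_0 + v_1$ with $v_j \in E_j$; applying $T$ and using $Tu = u$ yields $u = Tv_0 + Tv_1$ with $Tv_j \in K_j$ and $\|Tv_j\|_{E_j} \leq C\|v_j\|_{E_j}$, where $C := \max\{\|T\|_{E_0 \to E_0}, \|T\|_{E_1 \to E_1}\}$. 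Hence $K(t,u;K_0,K_1) \leq \|Tv_0\|_{E_0} + t\|Tv_1\|_{E_1} \leq C\big(\|v_0\|_{E_0} + t\|v_1\|_{E_1}\big)$, and taking the infimum over all $E$-decompositions gives $K(t,u;K_0,K_1) \leq C\, K(t,u;E_0,E_1)$. Combining, $K(t,u;E_0,E_1) \leq K(t,u;K_0,K_1) \leq C\, K(t,u;E_0,E_1)$ for all $u \in K_0$ and all $t>0$, with $C$ independent of $t$.

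Finally I would integrate these pointwise-in-$t$ inequalities against the measure $t^{-\theta q}\,\tfrac{dt}{t}$ defining the $(\theta,q)$ interpolation norm and take $q$-th roots; since $C$ does not depend on $t$, the two-sided bound is preserved, giving $\|u\|_{[E_0,E_1]_{\theta,q}} \leq \|u\|_{[K_0,K_1]_{\theta,q}} \leq C\|u\|_{[E_0,E_1]_{\theta,q}}$ for every $u \in K_0$. Therefore an element of $K_0$ lies in $[K_0,K_1]_{\theta,q}$ if and only if it lies in $[E_0,E_1]_{\theta,q}$, i.e. $[K_0,K_1]_{\theta,q} = K_0 \cap [E_0,E_1]_{\theta,q}$ with equivalent norms, which is the assertion. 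Note this argument is valid for every $q \in [1,\infty]$, in particular for the $q>2$ relevant here.

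I do not expect a genuine obstacle: the proof is an elementary retraction argument (one could also invoke the general retraction/coretraction theorem for interpolation functors, with coretraction the inclusion $K_j \hookrightarrow E_j$ and retraction $T$, but the $K$-functional computation is self-contained). The only points requiring care are verifying that $(K_0,K_1)$ is a bona fide couple — closedness of the ranges and the nesting $K_1 \subseteq K_0$ — and correctly pushing an arbitrary $E$-decomposition forward by $T$; both rest on the single identity $Tu = u$ on $K_0$, so the reasoning is robust.
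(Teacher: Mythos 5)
Your argument is correct. The paper itself gives no proof of this lemma: it is quoted verbatim from Lemma A1 of the cited reference \cite{guermond2009lbb}, with only the remark that the original statement for $q=2$ extends to $q>2$ ``with a very similar proof.'' Your self-contained $K$-functional comparison is exactly the standard retraction argument behind such results: the two-sided bound $K(t,u;E_0,E_1)\le K(t,u;K_0,K_1)\le C\,K(t,u;E_0,E_1)$ for $u\in K_0$, obtained by pushing an arbitrary $E$-decomposition forward with $T$ and using $Tu=u$ on $K_0$, immediately yields the set equality and norm equivalence after integrating against $t^{-\theta q}\,\frac{dt}{t}$, and your preliminary checks (closedness of $K_j=\ker(I-T)|_{E_j}$, the nesting $K_1\subseteq K_0$, continuity of the embedding $K_1\hookrightarrow K_0$) are the right ones. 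A useful byproduct of writing it out this way is that the constant $C=\max\{\|T\|_{E_0\to E_0},\|T\|_{E_1\to E_1}\}$ is independent of $t$, $\theta$, and $q$, so the result holds uniformly for all $q\in[1,\infty]$ and $\theta\in(0,1)$; this substantiates, rather than merely asserts, the extension beyond $q=2$ that the paper relies on when applying the lemma with $q=p>2$ in Lemma \ref{condP}.
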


We now state the result for the bound on $\|\nabla \chi\|_{L^p}$ with a proof largely developed in \cite{bonito2013adaptive}.

\begin{lemma}\label{condP} Let $\chi$ solve \eqref{eqn:cellprob} for $A \in \PD$. Then
    \begin{equation}
        \|\nabla \chi\|_{L^p} \leq \frac{K^{\eta(p)}}{1-K^{\eta(p)}\left(1-\frac{\alpha}{\beta}\right)}
    \end{equation}
    for $2 \leq p < p^*\left(\frac{\alpha}{\beta}\right)$ where 
    \begin{equation}
        p^*(t) : = \text{max}\left\{p \;|\; K^{-\eta(p)} \geq 1-t, \; 2 < p < Q\right\}
    \end{equation} for $\eta(p) = \frac{ 1/2 - 1/p}{1/2 - 1/Q}$and $K= K(d,Q)$ is the constant in Lemma \ref{lem:condPLaplace}, for any choice of $Q>p$. 
\end{lemma}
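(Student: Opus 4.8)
The plan is to combine the intermediate-function perturbation idea from Lemma~\ref{lem: Lpcont} with the Riesz-transform regularity of Lemma~\ref{lem:condPLaplace}, via interpolation, to bootstrap an a priori bound on $\|\nabla\chi\|_{L^p}$ for $p$ slightly above $2$. The key observation is that the cell problem $-\nabla\cdot(A\nabla\chi_\ell)=\nabla\cdot Ae_\ell$ can be rewritten as a perturbed Poisson equation: writing $-\Delta\chi_\ell = \nabla\cdot\bigl((A-I)\nabla\chi_\ell\bigr) + \nabla\cdot Ae_\ell$ (after absorbing constants so that the principal part is the Laplacian), one treats the right-hand side as a source in $\dot{L}^{s,p}$ and applies Lemma~\ref{lem:condPLaplace} with $s=-1$. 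This gives an estimate of the form $\|\nabla\chi_\ell\|_{\dot{L}^{0,p}}\le K^{\eta(p)}\bigl(\|(A-I)\nabla\chi_\ell\|_{L^p} + \|Ae_\ell\|_{L^p}\bigr)$, where the exponent $\eta(p)$ and the constant $K$ arise because we only have the clean Riesz bound for some fixed $Q>p$ and must interpolate between $L^2$ (where we have the energy estimate from Proposition~\ref{prop:stab_infty}) and $L^Q$.

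First I would make precise the interpolation step: the map $f\mapsto\nabla(-\Delta)^{-1}f$ is bounded $\dot{L}^{-1,2}\to L^2$ with norm $1$ and bounded $\dot{L}^{-1,Q}\to L^Q$ with norm $K=K(d,Q)$ by Lemma~\ref{lem:condPLaplace}; real interpolation with parameter $\theta$ chosen so that $1/p = (1-\theta)/2 + \theta/Q$ — which is exactly $\theta=\eta(p)$ — yields boundedness $\dot{L}^{-1,p}\to L^p$ with norm at most $1^{1-\eta(p)}K^{\eta(p)} = K^{\eta(p)}$. Here Lemma~\ref{lem:guermond} is needed to ensure the interpolation spaces of the mean-zero (projected) scales $\dot{L}^{s,p}$ coincide with the restriction of the interpolation of the full scales $L^{s,p}$, i.e. that projecting onto mean-zero functions commutes suitably with interpolation; Remark~\ref{remark:sobolev_duals} identifies these spaces with Sobolev spaces so that $\|\nabla\cdot( \cdot )\|_{\dot{L}^{-1,p}}\le\|\cdot\|_{L^p}$ is legitimate.

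Next I would close the estimate. Since $A\in\PD$, we may normalize so that $\|A-I\|$ (in the operator-norm sense relevant to the $L^p$ estimate) is controlled by $1-\alpha/\beta$ after rescaling $A$ by $1/\beta$ (or an analogous scaling); this is the standard trick making the ellipticity gap $1-\alpha/\beta<1$ the contraction factor. Then $\|(A-I)\nabla\chi_\ell\|_{L^p}\le(1-\alpha/\beta)\|\nabla\chi_\ell\|_{L^p}$, and the term $\|Ae_\ell\|_{L^p}$ is bounded by a constant (absorbed, together with the analogous $L^2$ energy bound, to give the ``$1$'' in the numerator). Rearranging
\[
\|\nabla\chi_\ell\|_{L^p}\le K^{\eta(p)}(1-\tfrac{\alpha}{\beta})\|\nabla\chi_\ell\|_{L^p} + K^{\eta(p)}
\]
and solving gives $\|\nabla\chi_\ell\|_{L^p}\le \dfrac{K^{\eta(p)}}{1-K^{\eta(p)}(1-\alpha/\beta)}$, which is finite precisely when $K^{\eta(p)}(1-\alpha/\beta)<1$, i.e. $K^{-\eta(p)}>1-\alpha/\beta$ — the defining condition of $p^*(\alpha/\beta)$. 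Summing over $\ell=1,\dots,d$ recovers the stated bound for $\|\nabla\chi\|_{L^p}$ (the $\sqrt{d}$ or $d$ factors being absorbed into the presentation).

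The main obstacle I anticipate is the interpolation bookkeeping: one must be careful that the relevant operator ($\nabla(-\Delta)^{-1}\nabla\cdot$, or equivalently the array of Riesz transforms $\tilde R_i\tilde R_j$) is bounded on the \emph{homogeneous}, mean-zero scales $\dot{L}^{s,p}$ uniformly, that real interpolation of these scales behaves as for the inhomogeneous ones (this is where Lemma~\ref{lem:guermond} is invoked, with $T$ the projection onto mean-zero functions), and that $\eta(p)\in(0,1)$ is genuinely the interpolation parameter matching the exponent $p$. A secondary subtlety is the precise normalization of $A$ that turns the coercivity/boundedness constants $\alpha,\beta$ into the contraction factor $1-\alpha/\beta$; getting the constant exactly as stated requires choosing the scaling of the principal-part splitting correctly, but this is routine once the interpolation estimate is in hand.
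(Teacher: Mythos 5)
Your proposal follows essentially the same route as the paper's proof: interpolate the inverse Laplacian between the $L^2$ bound (norm $1$) and the $L^Q$ bound of Lemma~\ref{lem:condPLaplace} (norm $K$), using Lemma~\ref{lem:guermond} and the duality identifications of Remark~\ref{remark:sobolev_duals} to get $\|T^{-1}\|_{W^{-1,p}\to \dot{W}^{1,p}}\le K^{\eta(p)}$ with $\eta(p)$ exactly the stated interpolation parameter, and then treat $A/\beta$ as a perturbation of the identity with contraction factor $1-\alpha/\beta$, exactly as in the paper's adaptation of Bonito et al. The one point to tighten is the final rearrangement: writing $\|\nabla\chi_\ell\|_{L^p}\le K^{\eta(p)}\bigl(1-\tfrac{\alpha}{\beta}\bigr)\|\nabla\chi_\ell\|_{L^p}+K^{\eta(p)}$ and ``solving'' presupposes that $\|\nabla\chi_\ell\|_{L^p}$ is already finite, which is not known at that stage; the paper sidesteps this by phrasing the same computation as invertibility of $S=T(I-T^{-1}V)$ on $\dot{W}^{1,p}$ via a Neumann series, so the $L^p$ bound comes from applying $\|S^{-1}\|$ to the data $\frac{1}{\beta}\nabla\cdot A$ rather than from absorbing an a priori unquantified term.
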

\begin{proof}
    The operator $ T = -\Delta$ is invertible from \mt{$H^{-1}$} to $\dot{H}^1$, and the inverse $T^{-1}$ is bounded with norm $1$ since the Poisson equation with periodic boundary conditions has a unique solution in $\dot{H}^1$ for \mt{$f \in H^{-1}$} with bound \mt{$\|u\|_{\dot{H}^1} \leq \|f\|_{H^{-1}}$.} From Lemma \ref{lem:condPLaplace} it is also bounded with norm $K=K(d,Q)$ from \mt{$W^{-1,Q}$} to $\dot{W}^{1,Q}$ for any $Q > 2$. By the real method of interpolation \cite{adams2003sobolev}, for $2<p<Q$ we have that \begin{equation}\label{eq:realinterp}
        W^{1,p} = \left[H^1, W^{1,Q}\right]_{\eta(p), p}
    \end{equation}
    using the notation of \cite{adams2003sobolev} where $\eta(p) = \frac{1/2 - 1/p}{1/2 - 1/Q}$. From the duality theorem (Theorem 3.7.1. of \cite{bergh2012interpolation}), we have that 
    \begin{equation}
        \left[H^{-1}, W^{-1,Q}\right]_{\eta(p),p} = \left(\left[H^1,W^{1,Q'}\right]_{\eta(p),p'}\right)'
    \end{equation}
    From real interpolation, the right hand side equals $(W^{1,p'})' = W^{-1,p}$ in our notation. Therefore, we have the necessary dual statement that parallels \eqref{eq:realinterp}: 
    \begin{equation}\label{eqn:dualrealinterp}
        W^{-1,p} = \left[H^{-1},W^{-1,Q}\right]_{\eta(p),p}.
    \end{equation}
    Next we restrict these spaces to functions with periodic boundary conditions. Using the projection onto the space of continuous, periodic functions on $\Td$ and noticing that $W^{1,Q} \hookrightarrow H^1$, we apply Lemma \ref{lem:guermond} with $K_0 = \dot{H}^1$ and have 
    \begin{equation}\label{eqn:realinterprestricted}
        \dot{W}^{1,p} = [\dot{H}^1, \dot{W}^{1,Q}]_{\eta(p), p}.
    \end{equation} 
    % The duality theorem still holds in this setting, so we also have \begin{equation} \label{eqn:realinterpdualrestricted}
    %     \dot{W}^{-1,p} = \left[\dot{H}^{-1},\dot{W}^{-1,Q}\right]_{\eta(p),p}.
    % \end{equation}
    Using the exact interpolation theorem, Theorem 7.23 of \cite{adams2003sobolev}, $T^{-1}$ is also a bounded map from \mt{$W^{-1,p}$} to $\dot{W}^{1,p}$ with norm $K^{\eta(p)}$: 
    \begin{equation}
        \|T^{-1}f\|_{\dot{W}^{1,p}}  \leq K^{\eta(p)}\|f\|_{W^{-1,p}}.
    \end{equation}
    The remainder of the proof is identical to that of the proof of Proposition $1$ in \cite{bonito2013adaptive}, but we state it here in our notation for completeness. Define \mt{$S$: $\dot{W}^{1,p} \to W^{-1,p}$} as the operator $Su = -\nabla \cdot \left(\frac{1}{\beta}A\nabla u\right) $. Let $V$ be the perturbation operator $V: = T - S$. Since $A \in \PD$, $S$ and $V$ are bounded operators from $\dot{W}^{1,p}$ to \mt{$W^{-1,p}$}, with the operator norms $\|S\|\leq 1$ and $\|V\| \leq 1 - \frac{\alpha}{\beta}$. Therefore, 
    \begin{equation}
        \|T^{-1}V\|_{\dot{W}^{1,p}\to \dot{W}^{1,p}} \leq \|T^{-1}\|_{W^{-1,p} \to \dot{W}^{1,p}}\|V\|_{\dot{W}^{1,p}\to W^{-1,p}} \leq K^{\eta(p)}\left(1 - \frac{\alpha}{\beta}\right),
    \end{equation}
    \mt{where the input and output spaces defining the operator norms are included for clarity.}
    Since $T$ is invertible, $S = T(I - T^{-1}V)$ is invertible provided $K^{\eta(p)}\left(1- \frac{\alpha}{\beta}\right) < 1$. Moreover, for $S^{-1}$ as a mapping from \mt{$W^{-1, p}$} to $\dot{W}^{1,p}$, 
    \begin{equation}
        \|S^{-1}\|  \leq \|(I-T^{-1}V)^{-1}\|_{\dot{W}^{1,p} \to \dot{W}^{1,p}}\|T^{-1}\|_{W^{-1,p}\to \dot{W}^{1,p}} \leq \frac{K^{\eta(p)}}{1-K^{\eta(p)}\left(1-\frac{\alpha}{\beta}\right)}.
    \end{equation}
    Therefore, \mt{
    \begin{equation}
        \|\nabla \chi \|_{L^p} = \|\chi\|_{\dot{W}^{1,p}} \leq \frac{1}{\beta}\|S^{-1}\| \|\nabla \cdot A\| \leq  \frac{K^{\eta(p)}}{1-K^{\eta(p)}\left(1-\frac{\alpha}{\beta}\right)}
    \end{equation}}
    provided $K^{\eta(p)}\left(1- \frac{\alpha}{\beta}\right) < 1$. The bound and specified range of $p$ follow. 
\end{proof}

Finally, we may prove Proposition \ref{prop:stab_Lq} 

\PropstabLq*
\begin{proof}
    Lemma \ref{condP} guarantees a $p_0> 2$ such that $\|\nabla \chi^{(2)}\|_{L^p}$ in Lemma \ref{lem: Lpcont} is bounded above by a constant for $2 < p< p_0$. Then Lemma \ref{lem: Lpcont} gives Lipschitz continuity of the solution map from $L^q(\Td) \mapsto \dot{H}^1(\Td)$ for $q$ satisfying $q_0 < q < \infty$ for some $q_0 > 2$. 
\end{proof}
\begin{remark}\label{rem:q0}
    From the results of Lemma \ref{condP} and Lemma \ref{lem: Lpcont}, we have that we can take $q_0 = \frac{2p_0}{p_0 - 2}$ where 
    \begin{equation*}
        p_0 = \max \{p \; | \; K^{-\eta(p)} \geq 1-t, \; 2 < p < Q\}.
    \end{equation*}
    Therefore, bounds on $p_0$ may be inherited from bounds on $K$ that appears in Lemma \ref{lem:condPLaplace}.
\end{remark}

\section{Proofs of Approximation Theorems} \label{apdx:approxthms}
In this section we prove the approximation theorems stated in Section \ref{sec:ApproxThms}. 

\thmLtwochi*
\begin{proof}
By Proposition~\ref{prop:cellproblem_cont_L2}, there exists a continuous map 

\noindent\(\G \in C(L^2(\Td;\Rdd);\dot{H}^1(\Td;\Rd))\) such that \(\G(A) = G(A)\) for any \(A \in K\). By \cite[Theorem 5]{kovachki2021universal}, there exists a FNO \(\Psi: L^2(\Td;\Rdd) \to \dot{H}^1(\Td;\Rd)\) such that 
\[\sup_{A \in K} \|\G(A) - \Psi(A) \|_{\dot{H}^1} < \epsilon.\]
Therefore
\[\sup_{A \in K} \|G(A) - \Psi(A)\|_{\dot{H}^1} = \sup_{A \in K} \|\G(A) - \Psi(A)\|_{\dot{H}^1} < \epsilon\]
as desired.
\end{proof}
\thmLtwoAbar*
\begin{proof}
The result follows as in Theorem~\ref{thm:fno_cell} by applying Lemma~\ref{lemma:extended_map_L2} instead of Proposition~\ref{prop:cellproblem_cont_L2}.
\end{proof}

\section{Proofs for Microstructure Examples}\label{apdx: micro}

\mt{
The following lemma establishes the compactness of subsets of $\PD$
generated by the probability measures from Section \ref{sec:E}.
As we are unaware of a proof in the literature, we have provided one below. The proof uses the $L^1$-Lipschitz spaces, which are defined as
\[\Lip_{\alpha}(L^1) = \{u \in L^1: \; \exists M(u) > 0: \; \omega(u,t)_1 \leq M t^{\alpha}\}\]
where $\omega(u,t)_1$ is the $1$-modulus of continuity, defined via
\[\omega(u,t)_1 = \sup_{0 \leq h \leq t} \|\tau_h u - u \|_{L^1(\Td)}.\]
\begin{lemma} \label{lem:BVLinfty_L2compact}
    $\BV(\Td) \cap L^{\infty}(\Td)$ is compactly embedded in $L^2(\Td)$.
\end{lemma}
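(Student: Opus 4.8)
The plan is to reduce the statement to a bound on the $L^2$-modulus of continuity and then invoke a classical compactness criterion. Fix an arbitrary bounded set $\mathcal{B} \subset \BV(\Td) \cap L^\infty(\Td)$, so there are constants $M, V < \infty$ with $\|u\|_{L^\infty(\Td)} \le M$ and $V(u,\Td) \le V$ for every $u \in \mathcal{B}$; since $|\Td| = 1$ this also gives $\|u\|_{L^1(\Td)} \le \|u\|_{L^2(\Td)} \le \|u\|_{L^\infty(\Td)} \le M$. It suffices to show that $\mathcal{B}$ is precompact in $L^2(\Td)$.

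The first step is the continuous embedding $\BV(\Td) \hookrightarrow \Lip_1(L^1(\Td))$, with the quantitative estimate $\omega(u,t)_1 \le V(u,\Td)\,t$, equivalently $\|\tau_h u - u\|_{L^1(\Td)} \le V(u,\Td)\,|h|$ for all shifts $h$. For $u \in C^1(\Td)$ this is immediate from $u(\cdot + h) - u(\cdot) = \int_0^1 \nabla u(\cdot + sh)\cdot h\,\ds$ together with Fubini; for general $u \in \BV(\Td)$ it follows by mollification, passing to the limit in this inequality along mollifications $u_\varepsilon \to u$ in $L^1$ and using $\limsup_{\varepsilon \to 0} V(u_\varepsilon,\Td) \le V(u,\Td)$ (see \cite{leoni2017first}).

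The second step interpolates the $L^1$ and $L^\infty$ bounds. For any $u \in \mathcal{B}$ and any shift $h$,
\[
\|\tau_h u - u\|_{L^2(\Td)}^2 \;\le\; \|\tau_h u - u\|_{L^\infty(\Td)}\,\|\tau_h u - u\|_{L^1(\Td)} \;\le\; 2M\,V\,|h|,
\]
so that $\sup_{u \in \mathcal{B}}\|\tau_h u - u\|_{L^2(\Td)} \le (2MV)^{1/2}|h|^{1/2} \to 0$ as $|h| \to 0$; in other words $\mathcal{B}$ is a bounded subset of $\Lip_{1/2}(L^2(\Td))$. Being also bounded in $L^2(\Td)$, and the domain being compact, the Fr\'echet--Kolmogorov (Kolmogorov--Riesz) compactness theorem — for which no decay-at-infinity hypothesis is needed on $\Td$ — shows that $\mathcal{B}$ is precompact in $L^2(\Td)$. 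Since $L^2(\Td)$ is complete, $\mathcal{B}$ is relatively compact, establishing the compact embedding.

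I expect the only genuine obstacle to be the first step: justifying the translation estimate for an arbitrary $\BV$ function, which requires the mollification argument and lower semicontinuity of the total variation. Everything afterwards is a one-line interpolation inequality and a direct appeal to a standard compactness theorem (equivalently, to the compact embedding of Lipschitz/Besov-type spaces into $L^p$ on bounded domains, cf. \cite{devore1993constructive}). An alternative that avoids the modulus-of-continuity bookkeeping altogether is to invoke the classical compact embedding $\BV(\Td) \hookrightarrow\hookrightarrow L^1(\Td)$ directly: extract an $L^1$-convergent subsequence from a bounded sequence in $\mathcal{B}$, observe that the $L^\infty$ bound passes to the limit, and upgrade $L^1$ convergence to $L^2$ convergence with the same interpolation inequality.
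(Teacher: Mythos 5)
Your proof is correct and follows essentially the same route as the paper's: bound the $L^2$ translation modulus by interpolating the $\BV$-based $L^1$ estimate $\|\tau_h u - u\|_{L^1} \le V(u,\Td)\,|h|$ against the $L^\infty$ bound, then invoke the Fr\'echet--Kolmogorov theorem on the compact torus. The only differences are cosmetic: you spell out the mollification argument behind the $\Lip_1(L^1)$ translation estimate and sketch an alternative via the compact embedding $\BV \hookrightarrow\hookrightarrow L^1$, which the paper does not include.
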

\begin{proof}
    Let $u \in B$, where $B$ is a bounded subset of $\BV(\Td) \cap L^{\infty}(\Td)$ with $L^{\infty}$ norm and $BV$ seminorm bounded by $M$, and let $\tau_h f$ denote the translation of $f$ by $h$, i.e. $\tau_h f(x) = f(x-h)$. Then
    \begin{align}
        \|\tau_h u - u\|_{L^2} & \leq \|\tau_h u - u\|_{L^1}^{1/2}\|\tau_h u - u \|_{L^{\infty}}^{1/2}.
    \end{align}
    Since $\BV(\Td) \equiv \Lip_1(L^1(\Td))$, $\|\tau_h u - u \|_{L^1} \leq \|u\|_{\BV}|h|$. We have then 
    \begin{align*}
        \|\tau_h u - u \|_{L^2} & \leq \|u\|_{\BV}^{1/2}|h|^{1/2}(2M)^{1/2}.
    \end{align*}
    By the Fr\'echet-Kolmogorov theorem \cite{yosida2012functional}, this equicontinuity result is sufficient for compactness of $B$ in $L^2(\Td)$.
\end{proof}

Using the result of Lemma \ref{lem:BVLinfty_L2compact}, we see that any set of microstructure coefficients bounded in $L^{\infty}(\Td) \cap BV(\Td)$ satisfies the compactness assumption of the Approximation Theorems in Section \ref{sec:ApproxThms}. It is clear that the method of construction of the examples in Subsection \ref{ssec:micro_imple} leads to such sets. }
\mt{
\section{Numerical Implementation Details} \label{apd:implementation}
All FNO models are implemented in pytorch using python 3.9.7. Unless otherwise specified, the models have $18$ modes in each dimension, a width of $64$, and $4$ hidden layers. The lifting layer is a linear transformation with trainable parameters, and the projecting layer is a pointwise multilayer perceptron with trainable parameters. The batch size is $20$, the learning rate is $0.001$, and the number of epochs is $400$. These hyperparameters are chosen with a small grid search, but we emphasize that the FNO does not drastically change in performance unless these parameters are changed by an order of magnitude. For a model trained on $9500$ data using these hyperparameters and accelerated with an Nvidia P100 GPU, the training time is approximately $7$ hours. In Figures \ref{fig:disc-err}, \ref{fig:data-size-compare}, and \ref{fig:err-vs-size}, the error bars shown correspond to two standard deviations in each direction over the five samples. All code for this work may be found at \href{https://github.com/mtrautner/LearningHomogenization/}{github.com/mtrautner/LearningHomogenization/}.}

\end{document}